\newtheorem{theorem}{Theorem}[section]
\newtheorem{corollary}[theorem]{Corollary}
\newtheorem{lemma}[theorem]{Lemma}
\newtheorem{claim}[theorem]{Claim}
\theoremstyle{definition}
\newtheorem{definition}[theorem]{Definition}
\theoremstyle{remark}
\newtheorem{remark}[theorem]{Remark}
\numberwithin{equation}{section}
\newcommand{\N}{\mathbb N}
\newcommand{\C}{\mathbb C}
\newcommand{\Sp}{\mathbb S}
\newcommand{\B}{\mathcal B}
\date{\today}
\title[Existence of harmonic maps into CAT(1) spaces]
{Existence of harmonic maps into CAT(1) spaces}
\thanks{This work began as part of the workshop ``Women in Geometry" (15w5135) at the Banff International Research Station in November of 2015. We are grateful to BIRS for the opportunity to attend and for the excellent working environment. 
CB, CM were supported in part by NSF grants DMS-1308420 and DMS-1406332 respectively, and LH was supported by NSF grants DMS-1308837 and DMS-1452477. AF was supported in part by an NSERC Discovery Grant. PS was supported in part by an NSERC PGS D scholarship and a UBC Four Year Doctoral Fellowship. YZ was supported in part by an AWM-NSF Travel Grant. This material is also based upon work supported by NSF DMS-1440140 while CB and AF were in residence at the Mathematical Sciences Research Institute in Berkeley, California, during the Spring 2016 semester.}
\author[Breiner]{Christine Breiner}
\address{Department of Mathematics \\
                 Fordham University \\Bronx, NY  10458\\}
\email{cbreiner@fordham.edu}
\author[Fraser]{Ailana Fraser}
\address{Department of Mathematics \\
                 University of British Columbia \\
                 Vancouver, BC V6T 1Z2\\}
\email{afraser@math.ubc.ca}
\author[Huang]{Lan-Hsuan Huang}
\address{Department of Mathematics\\ University of Connecticut\\ Storrs, CT 06269\\}
\email{lan-hsuan.huang@uconn.edu}
\author[Mese]{Chikako Mese}
\address{Johns Hopkins University\\
Department of Mathematics\\
3400 N. Charles Street\\
Baltimore, MD  21218\\
}\email{cmese@math.jhu.edu}
\author[Sargent]{Pam Sargent}
\address{Department of Mathematics \\
                 University of British Columbia \\
                 Vancouver, BC V6T 1Z2\\}
\email{psargent@math.ubc.ca}
\author[Zhang]{Yingying Zhang}
\address{Johns Hopkins University\\
Department of Mathematics\\
3400 N. Charles Street\\
Baltimore, MD  21218\\}
\email{yzhang@math.jhu.edu}
\begin{document}

\maketitle

\section{Introduction}

In many existence theorems for harmonic maps, the key assumption is the  non-positivity of the curvature of the target space.  The prototype is the  celebrated work of Eells and Sampson \cite{eells-sampson}  and Al'ber \cite{alber1}, \cite{alber2}  where the assumption of  the non-positive sectional curvature of the target Riemannian manifold plays an essential  role.  The Eells-Sampson  existence theorem  has been extended to the equivariant case  by Diederich-Ohsawa \cite{diederich-ohsawa},  Donaldson \cite{donaldson}, Corlette \cite{corlette}, Jost-Yau \cite{jost-yau} and Labourie \cite{labourie}.  Again, all these works assume non-positive sectional curvature on the target.  
  For smooth Riemannian manifold domains and NPC targets (i.e.~complete metric spaces with non-positive curvature in the sense of Alexandrov), existence theorems were obtained by Gromov-Schoen \cite{gromov-schoen}  and Korevaar-Schoen \cite{korevaar-schoen1}, \cite{korevaar-schoen2}.   The generalization to the case when the domain is a metric measure space has been discussed by Jost (\cite{jostgreenbook} and the references therein) and separately by Sturm \cite{sturm}.

When the curvature of the target space is not assumed to be non-positive, the existence problem for harmonic maps becomes more complicated, and in many ways, more interesting.  Although the general problem is not well understood, a breakthrough was achieved in the case of two-dimensional domains by Sacks and Uhlenbeck \cite{sacks-uhlenbeck}.   Indeed, they  discovered a ``bubbling phenomena" for harmonic maps;    more specifically, they prove the following dichotomy: given a finite energy map from a Riemann surface into a compact Riemannian manifold,  either there exists a  harmonic map homotopic to the given map  or there exists a branched minimal immersion of the 2-sphere.  We also mention the  related works of  Lemaire \cite{lemaire}, Sacks-Uhlenbeck \cite{sacks-uhlenbeck2}, and Schoen-Yau \cite{schoen-yau}. 

The goal of this paper is prove an analogous  result when the target space is a compact locally CAT(1) space, i.e.~a compact complete metric space of curvature bounded above by 1 in the sense of Alexandrov.

\begin{theorem} \label{main}
Let $\Sigma$ be a compact Riemann surface, $X$ a compact locally CAT(1) space and $\varphi \in C^0 \cap W^{1,2}(\Sigma,X)$. Then either there exists a harmonic map $u:\Sigma \rightarrow X$ homotopic to $\varphi$ or a conformal harmonic map $v:\mathbb S^2 \rightarrow X$.
\end{theorem}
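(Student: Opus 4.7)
The plan is to run a harmonic-replacement bubbling argument in the style of Sacks--Uhlenbeck and Colding--Minicozzi. Since the classical $\alpha$-energy perturbation relies on smooth-target elliptic theory, I would replace it by iterated harmonic replacement on small conformal disks, which only requires a local Dirichlet problem for the CAT(1) target. The starting point is a sequence $u_n \in C^0 \cap W^{1,2}(\Sigma, X)$ homotopic to $\varphi$ with $E(u_n) \to \inf_{[\varphi]} E$.

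The key local tool would be a CAT(1) Dirichlet problem of the following form: there exist constants $\eps_0 > 0$ and $\rho_0 > 0$, depending only on $X$, such that for any conformal disk $B \subset \Sigma$ and any $w \in C^0 \cap W^{1,2}(B, X)$ whose image lies in a geodesic ball $B_{\rho_0}(p) \subset X$ and satisfies $E(w;B) < \eps_0$, the boundary value problem with trace $w|_{\partial B}$ admits a unique energy minimizing harmonic solution $\tilde w$, together with an energy convexity estimate of the form $E(w) - E(\tilde w) \geq c\int_B |\nabla d(w,\tilde w)|^2$. Such a statement is in the spirit of the NPC theory of Korevaar--Schoen, adapted to CAT(1) targets via Serbinowski-type arguments. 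I would then fix a fine cover of $\Sigma$ by small disks and, for each $n$, iteratively harmonically replace $u_n$ on every disk where the small-energy and small-image conditions hold, obtaining a modified minimizing sequence that is essentially a local minimizer on each disk of the cover. Continuity of the replacement keeps the sequence in the homotopy class of $\varphi$.

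With this preparation, the standard concentration--compactness alternative should apply. A Courant--Lebesgue type argument combined with the small-energy replacement estimate gives equicontinuity of the modified sequence away from a finite set $\{p_1, \dots, p_k\} \subset \Sigma$ of concentration points at which at least $\eps_0$ of energy is trapped. On $\Sigma \setminus \{p_j\}$ one obtains $C^0_{\mathrm{loc}} \cap W^{1,2}_{\mathrm{loc}}$ subconvergence to a locally harmonic map, and a removable singularity theorem for finite-energy harmonic maps into CAT(1) spaces promotes it to a harmonic map $u : \Sigma \to X$. If the concentration set is empty, $u$ is homotopic to $\varphi$ and we are done. Otherwise, at any $p_j$ I would rescale by $\lam_n \to 0$ to zoom in on the concentrated energy, extract a subsequential limit on $\R^2$ away from finitely many secondary concentration points, and again apply removable singularity at those points and at infinity to produce a non-constant finite-energy harmonic map $v : \Sp^2 \to X$. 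Vanishing of the Hopf differential, automatic for harmonic maps from $\Sp^2$, makes $v$ conformal.

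The principal obstacle is that, unlike in the NPC setting, every analytic ingredient --- local Dirichlet solvability, energy convexity, uniform $C^0$ control, and removable singularity --- requires that the image of the maps in question lie in a CAT(1) convex ball of radius below a definite threshold (at most $\pi/2$). Maintaining this small-image constraint throughout the iterated replacement procedure and under the rescaling limits, while preserving the minimizing property and the homotopy class, is the most delicate step; it is precisely here that the compactness of $X$ and the energy threshold $\eps_0$ must be combined to produce a uniform modulus of continuity on pieces of $\Sigma$ where the image is confined to a CAT(1) convex ball.
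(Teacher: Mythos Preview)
Your overall strategy matches the paper's---iterated harmonic replacement, a bubbling dichotomy, removable singularity for the sphere---but the execution differs in a way that directly resolves the obstacle you flag at the end. The paper does not take a minimizing sequence in $[\varphi]$ or fix a cover; instead it starts from $u_0^0=\varphi$ and at each step chooses an \emph{adaptive} radius $r_n$, defined as the largest scale at which the current map sends every domain ball $B_{2r_n}(x)$ into a target ball of radius $3^{-\Lambda}\rho$ (with $\Lambda$ the coloring number from Jost's covering lemma). This guarantees the small-image constraint holds \emph{everywhere} at scale $r_n$, so replacement is performed on the full cover at that scale, never selectively. The dichotomy is then on $\liminf r_n$: if positive, Courant--Lebesgue together with the maximum principle for Dirichlet solutions gives equicontinuity of the entire replacement sequence on all of $\Sigma$, and the uniform limit is harmonic (via a compactness lemma for local minimizers) with no singular points whatsoever---so no removable-singularity argument is needed on $\Sigma$; if $\liminf r_n=0$, rescaling at a point where the modulus of continuity degenerates produces the bubble on $\C$, whose conformality follows from the Hopf differential being holomorphic and $L^1$. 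Two further remarks: the paper never uses that the energies approach the infimum over $[\varphi]$---harmonicity of the limit comes from compactness of local minimizers, not from global energy minimization---and the paper's removable-singularity theorem (Theorem~\ref{singularity}) is proved only for \emph{conformal} harmonic maps, via an area-monotonicity formula that genuinely uses conformality; so your proposed extension of the body map across concentration points on $\Sigma$ would require a non-conformal version the paper does not supply, though as you note this is not needed for the stated either/or conclusion.
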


Sacks and Uhlenbeck  used the perturbed energy method in the proof of Theorem~\ref{main} for Riemannian manifolds.  In doing so, they rely heavily on a priori estimates procured from the Euler-Lagrange equation of the perturbed energy functional.  
One of the difficulties in working in the singular setting is that,  because of the lack of local coordinates, one  does not have a P.D.E.  derived from a variational principle  (e.g.~harmonic map equation). In order to prove results in the singular setting, we cannot rely on P.D.E. methods. 
To this end, we use a 2-dimensional generalization of the Birkhoff curve shortening method \cite{birkhoff1}, \cite{birkoff2}.  This  local replacement process can be thought of as a discrete gradient flow.   This idea was  used by Jost \cite{jost} to give an alternative proof of the Sacks-Uhlenbeck theorem in the smooth setting.  More recently,  in studying width and proving finite time extinction of the Ricci flow, Colding-Minicozzi \cite{CM} further developed the local replacement argument and proved a new convexity result for harmonic maps and continuity of harmonic replacement. However, even these arguments rely on the harmonic map equation and  hence do not  translate to our case.   The main accomplishment of our method is to eliminate the need for a P.D.E. by using the local convexity properties of the target CAT(1) space.  (The  necessary convexity properties of a CAT(1) space are given in Appendices \ref{section:quadrilateral-estimates} \& \ref{energy-convexity}.)

For clarity, we provide a brief outline of the harmonic replacement construction. Given $\varphi:\Sigma \to X$, we set $\varphi = u_0^0$ and inductively construct a sequence of energy decreasing maps $u_n^l$ where $n \in \mathbb N \cup \{0\}$, $l \in \{0, \dots, \Lambda\}$, and $\Lambda$ depends on the geometry of $\Sigma$. The sequence is constructed inductively as follows. Given the map $u_n^0$, we determine the largest radius, $r_n$, in the domain on which we can apply the existence and regularity of Dirichlet solutions (see Lemma \ref{dir}) for this map. Given a suitable cover of $\Sigma$ by balls of this radius, we consider $\Lambda$ subsets of this cover such that every subset consists of non-intersecting balls. 
The maps $u_n^l:\Sigma \to X$, $l \in \{1, \dots, \Lambda\}$ are determined by replacing $u_n^{l-1}$ by its Dirichlet solution on balls in the $l$-th subset of the covering and leaving the remainder of the map unchanged. We then set $u_{n+1}^0:= u_n^\Lambda$ to continue by induction. There are 
now two possibilities, depending on $\liminf r_n=r$. If $r>0$, we demonstrate that the sequence we constructed is equicontinuous and has a unique limit that is necessarily homotopic to 
$\varphi$. Compactness for minimizers (Lemma \ref{luckhaus}) then implies that the limit map is harmonic. If $r=0$, then bubbling occurs. That is, after an appropriate rescaling of the original 
sequence, the new sequence is an equicontinuous family of harmonic maps from domains exhausting $\mathbb C$. As in the previous case, this sequence converges on compact sets to a limit harmonic map from $\mathbb C$ to $X$. We extend this map to $\mathbb S^2$ by a removable singularity theorem developed in section \ref{section:removable-singularity}.

 We now give an outline of the paper. In section \ref{section:preliminary}, we introduce some notation and provide the results that are necessary in order to perform harmonic replacement and obtain a harmonic limit map. In particular, we state the existence and regularity results for Dirichlet solutions and  prove compactness of energy minimizing maps into a CAT(1) space. 
 In section \ref{section:removable-singularity}, we prove our removable singularity theorem. Namely, in Theorem \ref{singularity} we prove that any conformal harmonic map from a punctured surface into a CAT(1) space extends as a locally Lipschitz harmonic map on the surface. This theorem extends to CAT(1) spaces the removable singularity theorem of Sacks-Uhlenbeck \cite{sacks-uhlenbeck} for a finite energy harmonic map into a Riemannian manifold, provided the map is conformal. The proof relies on two key ideas. First, for harmonic maps $u_0$ and $u_1$ into a CAT(1) space, while $d^2(u_0,u_1)$ is not subharmonic, 
 a more complicated weak differential inequality holds if the maps are into a sufficiently small ball (Theorem \ref{theorem:subharmonicity} in Appendix \ref{energy-convexity}, \cite{serbinowski}). Using this inequality, we prove a local removable singularity theorem for harmonic maps into a small ball. The second key idea, Theorem \ref{monotonicity}, is a monotonicity of the area in extrinsic balls in the target space, for conformal harmonic maps from a surface to a CAT(1) space. This theorem extends the classical monotonicity of area for minimal surfaces in Riemannian manifolds to metric space targets. The proof relies on the fact that the distance function from a point in a CAT(1) space is almost convex on a small ball. In application, the monotonicity is used to show that a conformal harmonic map defined on $\Sigma \backslash \{p\}$ is continuous across $p$. Then the local removable singularity theorem can be applied at some small scale. 
 Section \ref{section:harmonic-replacement} contains the harmonic replacement construction outlined above and the proof of the main theorem, Theorem \ref{main}. Finally, in Appendix \ref{section:quadrilateral-estimates} we give complete proofs of several difficult estimates for quadrilaterals in a CAT(1) space. The estimates are stated in the unpublished thesis \cite{serbinowski} without proof. We apply these estimates in Appendix \ref{energy-convexity} to give complete proofs of some energy convexity, existence, uniqueness, and subharmonicity results (also stated in \cite{serbinowski}) that are used throughout this paper.

\section{Preliminary results} \label{section:preliminary}

Throughout the paper we let $(\Omega, g)$ denote a Lipschitz Riemannian domain and $(X, d)$ a locally CAT(1) space. We refer the reader to Section 2.2 of \cite{BFHMSZ} for some background on CAT(1) spaces. We define the Sobolev space  $ W^{1,2}(\Omega,X) \subset L^2(\Omega,X)$ of finite energy maps. In particular, if $u \in W^{1,2}(\Omega,X)$, one can define its energy density  $|\nabla u|^2 \in L^1(\Omega)$  and the total energy 
\[
	{^d}E^u [\Omega] =\int_{{\Omega}}|\nabla u|^2 d\mu_g.
\]
We often suppress the superscript $d$ when the context is clear. We refer the reader to \cite{korevaar-schoen1} for further details and background. We denote a geodesic ball in $\Omega$ of radius $r$ centered at $p \in \Omega$ by $B_r(p)$ and a geodesic ball in $X$ of radius $\rho$ centered at $P \in X$ by $\B_\rho(P)$.  
Furthermore, given $h \in W^{1,2}(\Omega,X)$, we define
\[
W^{1,2}_h(\Omega,X)=\{ f \in W^{1,2}(\Omega,X): Tr(h)=Tr(f)\},
\]
where $Tr(u) \in L^2(\partial \Omega, X)$ denotes the trace map of $u \in W^{1,2}(\Omega,X)$ (see \cite{korevaar-schoen1} Section 1.12).

\begin{definition}
We say that a map $u:\Omega \rightarrow X$ is \emph{harmonic} if it is locally energy minimizing with locally finite energy; precisely, for every $p \in \Omega$, there exist $r >0$, $\rho>0$ and $P \in X$ such that $h=u\big|_{B_{r}(p)}$ has finite energy and minimizes energy among all maps in $W^{1,2}_h(B_r(p),\overline{\B_\rho(P)})$.  
\end{definition}

The following results will be used in the proof of the main theorem, Theorem \ref{main}.

\begin{lemma}[Existence, Uniqueness and Regularity of the Dirichlet solution] \label{dir}
For any finite energy map $h: \Omega \rightarrow \overline{\B_{\rho}(P)} \subset X$, where $\rho \in (0,\frac{\pi}{4})$, the Dirichlet solution exists. That is, there exists a unique element $^{Dir}h \in W^{1,2}_h(\Omega, \overline{\mathcal{B}_{\rho}(P)})$ that minimizes energy among all maps in $W^{1,2}_h(\Omega,\overline{\mathcal{B}_{\rho}(P)})$. 
Moreover, if $^{Dir}h(\partial \Omega) \subset \overline{\B_{\sigma}(P)}$ for some $\sigma \in (0,\rho)$, then $\overline{^{Dir}h({\Omega}) }\subset \overline{\B_{\sigma}(P)}$.  Finally, the solution $^{Dir}h$  is locally Lipschitz continuous with Lipschitz constant depending only on the total energy of the map and the metric on the domain. 
\end{lemma}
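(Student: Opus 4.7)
The plan is to run a standard direct method driven by the quantitative energy convexity for CAT(1) targets proved in Appendix \ref{energy-convexity}. The three assertions of the lemma (existence and uniqueness of the minimizer, the containment statement, and local Lipschitz regularity) then follow from largely separate arguments, each resting on the hypothesis $\rho < \frac{\pi}{4}$.

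For existence and uniqueness, take a minimizing sequence $\{u_n\} \subset W^{1,2}_h(\Omega, \overline{\B_\rho(P)})$. Since $\rho < \frac{\pi}{2}$, the ball is geodesically convex, so the pointwise geodesic midpoint $u_{m,n}$ of $u_m$ and $u_n$ is well-defined and lies in $W^{1,2}_h(\Omega, \overline{\B_\rho(P)})$ by the Korevaar--Schoen midpoint construction. The energy convexity of Appendix \ref{energy-convexity} gives, schematically,
\[
2\, E[u_{m,n}] + c(\rho) \int_\Omega \left|\nabla d(u_m,u_n)\right|^2 d\mu_g \;\le\; E[u_m] + E[u_n],
\]
with $c(\rho) > 0$ precisely because $\rho < \frac{\pi}{4}$. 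Using $E[u_{m,n}] \ge \inf E$ and passing to the limit along the sequence forces $\nabla d(u_m,u_n) \to 0$ in $L^2$; since $u_m$ and $u_n$ share the trace of $h$, a Poincar\'e inequality for $W^{1,2}_0$ functions upgrades this to $d(u_m,u_n) \to 0$ in $L^2$. Hence $\{u_n\}$ is Cauchy in $L^2(\Omega,X)$, with an $L^2$ limit $^{Dir}h \in W^{1,2}_h(\Omega, \overline{\B_\rho(P)})$ that is identified as the minimizer via Korevaar--Schoen lower semicontinuity of energy. Applying the same convexity inequality to any two minimizers then forces them to agree a.e., giving uniqueness.

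For the containment statement, use that $\overline{\B_\sigma(P)}$ is a closed geodesically convex subset of the CAT(1) ball $\overline{\B_\rho(P)}$, so there is a well-defined $1$-Lipschitz nearest-point projection $\Pi$ onto it. The composition $\Pi \circ {}^{Dir}h$ preserves the boundary values of $^{Dir}h$, which by hypothesis already lie in $\overline{\B_\sigma(P)}$, and by the nonexpansive property of $\Pi$ it does not increase the energy density pointwise. Thus $\Pi \circ {}^{Dir}h$ is an admissible competitor with $E[\Pi \circ {}^{Dir}h] \le E[{}^{Dir}h]$, and the uniqueness just established forces $\Pi \circ {}^{Dir}h = {}^{Dir}h$, giving $\overline{{}^{Dir}h(\Omega)} \subset \overline{\B_\sigma(P)}$.

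For local Lipschitz regularity, adapt the Korevaar--Schoen regularity scheme using the weak subharmonic-type inequality for $d^2({}^{Dir}h, Q)$ on small balls in $X$, which is proved in Appendix \ref{energy-convexity}. A Caccioppoli bound followed by Moser iteration on the approximately subharmonic energy density yields an interior $L^\infty$ estimate for $|\nabla {}^{Dir}h|^2$ in terms of the total energy, which a standard rescaling converts into the asserted Lipschitz constant depending only on ${}^dE^h[\Omega]$ and $g$. The main obstacle is the first ingredient: securing a strictly positive coefficient $c(\rho) > 0$ in the convexity inequality right up to the threshold $\rho < \frac{\pi}{4}$. This near-sharp regime is exactly where the quadrilateral comparison estimates of Appendix \ref{section:quadrilateral-estimates} become delicate, and it is ultimately the reason the Dirichlet problem must be staged inside a ball of radius strictly less than $\frac{\pi}{4}$ rather than at the weaker geodesic convexity threshold $\frac{\pi}{2}$.
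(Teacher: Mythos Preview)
Your argument is essentially the paper's own: a direct method driven by the energy convexity of Theorem~\ref{theorem:energy-convexity} plus Poincar\'e for existence and uniqueness, the nonexpansive projection onto $\overline{\B_\sigma(P)}$ for the containment, and a deferral to prior regularity theory (in the paper, \cite{BFHMSZ}) for the Lipschitz estimate. Two small corrections are worth making. First, the competitor in Theorem~\ref{theorem:energy-convexity} is not the geodesic midpoint $u_{m,n}$ itself but the modified map $w=(1-\eta)u_{1/2}+\eta\,O$, so your ``schematic'' inequality should be read through that lens. Second, your diagnosis of the threshold is off: the convexity coefficient $\cos^8\rho$ is positive for all $\rho<\tfrac{\pi}{2}$, so the existence and uniqueness step does not require $\rho<\tfrac{\pi}{4}$; the paper invokes $\rho<\tfrac{\pi}{4}$ only in the containment step, to guarantee that $\overline{\B_\sigma(P)}$ is geodesically convex and that the nearest-point projection onto it is distance-decreasing.
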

For further details see Lemma \ref{existence} in Appendix \ref{energy-convexity}, \cite{serbinowski}, and \cite{BFHMSZ}.

\begin{lemma}[Compactness for minimizers into CAT(1) space] \label{luckhaus}
Let $(X,d)$ be a CAT(1) space and $B_r\subset \Omega$ a geodesic (and topological) ball of radius $r>0$ where $(\Omega,g)$ is a Riemannian manifold. Let $u_i:B_r  \to X$ be a sequence of energy minimizers with $E^{u_i}[B_r] \leq \Lambda$ for some $\Lambda>0$. 

Suppose that $u_i$ converges uniformly to $u$ on $B_r$ and that
there exists $P \in X$ such that $u(B_{r}) \subset \mathcal B_{\rho/2}(P)$ where $\rho$ is as in Lemma \ref{dir}. Then $u$ is energy minimizing on $B_{r/2}$.
\end{lemma}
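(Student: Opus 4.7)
The plan is to use the classical Luckhaus-type interpolation argument adapted to the CAT(1) setting. First, since $u_i \to u$ uniformly on $B_r$, the map $u$ is continuous, and lower semicontinuity of energy under uniform convergence (which follows from the definition of energy in \cite{korevaar-schoen1}) gives $u \in W^{1,2}(B_r,X)$ with $E^u[B_r] \leq \liminf_i E^{u_i}[B_r]$. Moreover, by uniform convergence there exists $i_0$ such that $u_i(B_r) \subset \mathcal{B}_{\rho}(P)$ for all $i \geq i_0$, so all the competitors and interpolants we construct will take values in a ball where the CAT(1) quadrilateral estimates of Appendix \ref{section:quadrilateral-estimates} apply.

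To show $u$ is energy minimizing on $B_{r/2}$, let $v \in W^{1,2}_u(B_{r/2}, \overline{\mathcal{B}_\rho(P)})$ be any competitor with $\mathrm{Tr}(v) = \mathrm{Tr}(u)$ on $\partial B_{r/2}$. Extend $v$ to $B_r$ by $v = u$ on $B_r \setminus B_{r/2}$. For small $\delta > 0$ I would construct a comparison map $v_i^\delta : B_r \to \overline{\mathcal{B}_\rho(P)}$ agreeing with $u_i$ outside $B_{r/2+\delta}$ and with $v$ inside $B_{r/2-\delta}$, by linear geodesic interpolation on the annulus $A_\delta := B_{r/2+\delta} \setminus B_{r/2-\delta}$: namely, $v_i^\delta(x) = \gamma_x(t(x))$ where $\gamma_x$ is the (unique, since we are in a small ball) geodesic from $v(x)$ to $u_i(x)$ and $t$ is a Lipschitz cutoff going from $0$ to $1$ across $A_\delta$. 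Since $v_i^\delta = u_i$ on $\partial B_r$, minimality of $u_i$ yields
\[
E^{u_i}[B_{r/2+\delta}] \leq E^{v_i^\delta}[B_{r/2+\delta}] = E^{v}[B_{r/2-\delta}] + E^{v_i^\delta}[A_\delta].
\]

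The crux is the estimate on $E^{v_i^\delta}[A_\delta]$, obtained from the CAT(1) quadrilateral estimates: these give pointwise control of $|\nabla v_i^\delta|^2$ in terms of $|\nabla v|^2$, $|\nabla u_i|^2$, and $\delta^{-2} d^2(v, u_i)$, yielding
\[
E^{v_i^\delta}[A_\delta] \leq (1 + C\rho^2)\bigl(E^v[A_\delta] + E^{u_i}[A_\delta]\bigr) + \frac{C}{\delta^2} \int_{A_\delta} d^2(v, u_i)\, d\mu_g.
\]
Letting $i \to \infty$, uniform convergence of $u_i$ to $u$ together with lower semicontinuity of the left-hand side of the displayed minimality inequality gives
\[
E^u[B_{r/2}] \leq E^v[B_{r/2}] + C\bigl(E^v[A_\delta] + \limsup_i E^{u_i}[A_\delta]\bigr) + \frac{C}{\delta^2}\int_{A_\delta} d^2(v,u)\, d\mu_g.
\]
For the final term, since $v$ and $u$ share trace on $\partial B_{r/2}$, a Poincaré inequality on the thin annulus $A_\delta$ bounds $\delta^{-2}\int_{A_\delta} d^2(v,u)$ by $C(E^v[A_\delta] + E^u[A_\delta])$, which is an absolutely continuous annular energy that vanishes as $\delta \to 0$; similarly the other annular terms vanish by absolute continuity (using the uniform energy bound $\Lambda$ and, for a.e.~$\delta$, picking a favorable annular radius via Fubini).

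The main obstacle is the annular interpolation estimate: one must verify that the CAT(1) quadrilateral inequalities of Appendix \ref{section:quadrilateral-estimates}, which replace the linear parallelogram identity in the NPC case, still yield an estimate of the form above with the $(1 + C\rho^2)$ factor harmless as $\rho < \pi/4$. A secondary subtlety is the choice of annular radius: since $E^{u_i}[A_\delta]$ need not be small uniformly in $i$ for fixed $\delta$, I would first let $i \to \infty$ (using uniform convergence to pass the boundary-distance term to $d^2(v,u)$ and using lower semicontinuity on $A_\delta$ to control the annular $u$-energy) and only then let $\delta \to 0$, so that the errors collapse in the correct order.
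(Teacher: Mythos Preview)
Your outline follows the Luckhaus interpolation template, but it differs from the paper's proof in method and contains a genuine gap in the limiting step.

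The paper does \emph{not} interpolate geodesically in $X$. Instead it lifts $v_t$ and $u_i$ to the Euclidean cone $\mathcal C(X)$, which is NPC, via $\bar f=[f,1]$, invokes the NPC bridge principle of Korevaar--Schoen \cite[Lemma~3.12]{korevaar-schoen2} there, and then projects the resulting competitor back to $X\times\{1\}$, controlling the energy increase under projection by the elementary pointwise bound $D^2([P,h],[Q,h'])\ge(1-\sqrt\varepsilon)^2D^2([P,1],[Q,1])$. This sidesteps the need for a one-sided CAT(1) interpolation inequality. Note that the appendix only supplies an estimate on the \emph{sum} $|\nabla u_\eta|^2+|\nabla u_{1-\eta}|^2$ (Corollary~\ref{corollary:interpolation}), not on $|\nabla u_\eta|^2$ by itself, so the pointwise bound you assert for $|\nabla v_i^\delta|^2$ is not available off the shelf and would require separate work; the cone trick is precisely how the paper avoids that work.

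The more serious gap is your handling of $\limsup_i E^{u_i}[A_\delta]$. A uniform total-energy bound together with a Fubini slicing gives, for each $i$, an annulus near $\partial B_{r/2}$ on which $E^{u_i}$ is small, but that annulus depends on $i$; nothing you have written rules out the energy measures $|\nabla u_i|^2\,d\mu_g$ concentrating on $\partial B_{r/2}$ in the limit. Absolute continuity is a property of a single measure, not a uniform statement about a sequence, so your proposed order of limits does not force the annular term to vanish. The paper closes this by invoking the interior Lipschitz regularity of minimizers from Lemma~\ref{dir}: once $u_i(B_r)\subset\mathcal B_\rho(P)$ and $E^{u_i}[B_r]\le\Lambda$, each $u_i|_{B_{3r/4}}$ is Lipschitz with constant depending only on $\Lambda$ and $g$, whence $E^{u_i}[B_{r/2}\setminus B_{r/2-t}]\le Ct$ uniformly in $i$. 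You must insert this step; with it (and either the cone lifting or a genuine one-sided CAT(1) interpolation bound), the argument goes through.
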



\begin{proof}We will follow the ideas of the proof of Theorem 3.11 \cite{korevaar-schoen2}. Rather than prove the bridge principle for CAT(1) spaces, we will modify the argument and appeal directly to the bridge principle for NPC spaces (see Lemma 3.12 \cite{korevaar-schoen2}).  

Since $u_i \to u$ uniformly and $u(B_r) \subset \B_{\rho/2}(P)$, there exists $I$ large such that for all $i \geq I$, $u_i(B_r) \subset \B_\rho(P)$. By Lemma \ref{dir}, there exists $c>0$ depending only on $\Lambda$ and $g$ such that for all $i \geq I$, $u_i|_{B_{3r/4}}$ is Lipschitz with Lipschitz constant $c$. It follows that for $t>0$ small, there exists $C>0$ depending on $c$ and the dimension of $\Omega$ such that
\begin{equation}\label{kssmall}
E^{u_i}[B_{r/2}\backslash B_{r/2-t}] \leq Ct.
\end{equation}For $\varepsilon>0$, increase $I$ if necessary so that
for all $i \geq I$ and all $x \in  B_{3r/4}$, 
\begin{equation}\label{pointwiseLuck}
 d^2(u_i(x),u(x))<  \varepsilon.
\end{equation}

For notational ease, let $U_t:=B_{r/2-t}$. Let $w_t:U_t \to X$ denote the energy minimizer $w_t:={} ^{Dir}u|_{U_t} \in W^{1,2}_u(U_t,X)$, with existence guaranteed by Lemma \ref{dir}. Following the argument in the proof of Theorem 3.11 \cite{korevaar-schoen2}, \eqref{kssmall} and the lower semi-continuity of the energy imply that $\lim_{t\to 0} E^{w_t}[U_t] = E^{w_0}[B_{r/2}]$. 
Observe that by the lower semi-continuity of energy, Theorem 1.6.1 \cite{korevaar-schoen1},
\[
{^d E}^u[B_{r/2}] \leq \liminf_{i \to \infty} {^d E}^{u_i}[B_{r/2}].
\]Thus, it will be enough to show that 
\[
 \limsup_{i\to \infty} {^d E}^{u_i}[B_{r/2}] \leq {^d E}^{w_0}[B_{r/2}].
\]

Let $v_t:B_{r/2} \to X$ be the map such that $v_t|_{U_t}=w_t$ and $v_t|_{B_{r/2} \backslash U_t}=u$. Given $\delta>0$, choose $t>0$ sufficiently small so that
\begin{equation}\label{delta_t_choice}
^{d}E^{v_t}[B_{r/2}] < {}^dE^{w_0}[B_{r/2}] + \delta.
\end{equation}

Since $v_t$ is not a  competitor for $u_i$ (i.e. $v_t|_{\partial B_{r/2}}$ is not necessarily equal to $u_i|_{\partial B_{r/2}}$), for each $i$ we want to bridge from $v_t$ to $u_i$ for values near $\partial B_{r/2}$. 
Since we want to exploit a bridging lemma into NPC spaces, rather than bridge between $v_t$ and $u_i$, we will bridge between their lifted maps in the cone $\mathcal C(X)$.

Let $\mathcal C(X):=(X \times [0,\infty)/X \times \{0\},D  )$ where
\[
D^2([P,x],[Q,y])=x^2+y^2-2xy\cos \min (d(P,Q),\pi).
\] Then $\mathcal C(X)$ is an NPC space and we can identify $X$ with $X \times \{1\} \subset \mathcal C(X)$. 
For any map $f:B_r \to X$, we let $\overline f:B_r \to X \times \{1\}$ such that $\overline f(x) = [f(x),1]$. Note that for $f\in W^{1,2}(B_r, \B_\rho(Q))$, since
\begin{equation*}\label{projectiondistance}
\lim_{P \to Q}\frac{D^2([P,1],[Q,1])}{d^2(P,Q)} = \lim_{P \to Q}\frac{2(1- \cos (d(P,Q)))}{d^2(P,Q)}=1,
\end{equation*}it follows that ${}^DE^{\overline f}[\Omega] = {}^dE^f[\Omega]$ for $\Omega \subset B_r$.

For each $i\geq I$, and a fixed $s,\rho>0$ to be chosen later, define the map
\[
 v_i: \partial U_s \times [0,\rho] \to \mathcal C(X)
\]such that
\[
 v_i(x,z):=\left(1-\frac z\rho \right)\overline v_t(x) + \frac z{\rho} \overline u_i(x).
\] 

The map $v_i$ is a bridge between $\overline v_t|_{\partial U_{s}}$ and $\overline u_i|_{\partial U_{s}}$ in the NPC space $\mathcal C(X)$. 
That is, we are interpolating along geodesics connecting $\overline v_t(x), \overline u_i(x)$ in the NPC space 
$\mathcal C(X)$ and not  along geodesics in $X$. 
By \cite{korevaar-schoen2} (Lemma 3.12) and the equivalence of the energies for a map $f$ and its lift $\overline f$, 
\begin{align*}
{^D E}^{ v_i} [\partial U_{s} \times [0,\rho]]&\leq \frac \rho 2 \left({^D E}^{  \overline v_t} [\partial U_{s} ]+ {^D E}^{ \overline u_i} [\partial U_{s} ] \right) + \frac 1\rho \int_{\partial U_{s} } D^2([v_t,1],[u_i,1]) d\sigma \\
& =  \frac \rho 2 \left({^d E}^{   v_t} [\partial U_{s} ]+ {^d E}^{  u_i} [\partial U_{s} ] \right) + \frac 1\rho \int_{\partial U_{s} } D^2([v_t,1],[u_i,1]) d\sigma .
\end{align*}

By \eqref{kssmall}, and since $v_t=u$ on $B_{r/2} \backslash U_t$, for $s \in [2t/3,3t/4]$ the average values of the tangential energies of $v_t$ and $u_i$ on $\partial U_s$ are bounded above by $Ct/(3t/4-2t/3)=12C$. Moreover, since 
 $u_i(B_{r/2}),v_t(B_{r/2}) \subset \mathcal B_{\rho}(P)$, \eqref{pointwiseLuck} implies that for all $x \in B_{r/2}\backslash U_t$, 
\begin{equation}\label{Distance}
 D^2(\overline u_i(x), \overline v_t(x))= 2(1- \cos d(u_i(x), v_t(x))) \leq d^2(u_i(x), v_t(x))< \varepsilon .
\end{equation}
Thus, there exists $C'>0$ depending only on $g$ such that for every $s \in [2t/3, 3t/4]$,
\[
\int_{\partial U_s} D^2([v_t,1],[u_i,1]) d\sigma < C'\varepsilon.
\]Note that for each $\varepsilon>0$, the bound above depends on $I$ but not on $t$. 
Now, we first choose an $s \in (2t/3, 3t/4)$ such that ${^d E}^{   v_t} [\partial U_{s} ]+ {^d E}^{  u_i} [\partial U_{s} ] \leq 24C$. Next, pick $0<\mu \ll 1$ such that $[s, s+ \mu t] \subset [2t/3, 3t/4]$ and $12C\mu t < \delta/2$.  For this $t,\mu$, decrease $\varepsilon$ if necessary (by increasing $I$) such that 
\begin{align*}
^D E^{v_i}[\partial U_s \times [0, \mu t]] &= \frac {\mu t} 2 \left({^d E}^{   v_t} [\partial U_{s} ]+ {^d E}^{  u_i} [\partial U_{s} ] \right) + \frac 1{\mu t} \int_{\partial U_{s} } D^2([v_t,1],[u_i,1]) d\sigma \\
&< 24C \mu t /2+ C'\varepsilon/(\mu t)\\
& < \delta.
\end{align*}
Now, define $\tilde v_i:B_{r/2} \to \mathcal C(X)$ such that on $U_s$, $\tilde v_i$ is the conformally dilated map of $\overline v_t$ so that $\tilde v_i|_{\partial U_{s+\mu t}} = \overline v_t|_{\partial U_s}$. On $U_s \backslash U_{s+\mu t}$, let $\tilde v_i $ be the bridging map $v_i$, reparametrized in the second factor from $[0, \mu t]$ to $[s, s + \mu t]$. Finally, on $B_{r/2} \backslash U_s$, let $\tilde v_i= \overline u_i$. Then, for all $i \geq I$, 
\begin{equation}\label{tilde_est}
^DE^{\tilde v_i}[B_{r/2}] \leq {}^dE^{v_t}[B_{r/2}] + \delta + {}^dE^{u_i}[B_{r/2} \backslash U_s].
\end{equation}

While the map $\tilde v_i$ agrees with $\overline u_i$ on $\partial B_{r/2}$, it is not a competitor for $u_i$ into $X$ since $\tilde v_i$ maps into $\mathcal C(X)$. However, by defining 
$\underline v_i:B_{r/2} \to X$ such that $\tilde v_i(x) = [\underline v_i(x),h(x)]$, $\underline v_i$ is a competitor. Note that for all $x\in \partial U_s$, \eqref{Distance} implies that $h(x) \geq 1-\sqrt{\varepsilon}$. Therefore, on the bridging strip we may estimate the change in energy under
the projection map by first observing the pointwise bound 
\begin{align*}
D^2(\tilde v_i(x),\tilde v_i(y))&= D^2([\underline v_i(x),h(x)],[\underline v_i(y),h(y)])
\\&= h(x)^2 + h(y)^2 - 2h(x)h(y) \cos (d(\underline v_i(x),\underline v_i(y)))\notag\\
&= (h(x)-h(y))^2 + 2h(x)h(y) (1-\cos (d(\underline v_i(x),\underline v_i(y)))) \notag\\
& \geq 2(1-\sqrt{\varepsilon})^2(1-\cos(d(\underline v_i(x),\underline v_i(y))))\label{distancelowerbnd}\\
&= (1-\sqrt{\varepsilon})^2 D^2([\underline v_i(x),1],[\underline v_i(y),1]).\notag
\end{align*}
Therefore,
\begin{equation}\label{underline_est}
{^d E}^{\underline v_i}[B_{r/2}] = {}{^D E}^{[\underline v_i,1]}[B_{r/2}] \leq \left(1-\sqrt{\varepsilon}\right)^{-2}\, {}{^D E}^{\tilde v_i}[B_{r/2}] .
\end{equation}
Since $\underline v_i$ is a competitor for $u_i$ on $B_{r/2}$, \eqref{underline_est},  \eqref{tilde_est},  \eqref{delta_t_choice}, and \eqref{kssmall}  imply that
\begin{align*}
 {^d E}^{u_i}[B_{r/2}] &\leq  \left(1-\sqrt{\varepsilon}\right)^{-2}\, {^D E}^{\tilde v_i}[B_{r/2}]\leq   \left(1-\sqrt{\varepsilon}\right)^{-2}\,\left({^d E}^{w_0}[B_{r/2}]+2\delta+Ct\right)
\end{align*}
Since for any $\varepsilon,\delta>0$, by choosing $t>0$ sufficiently small and $I\in \mathbb N$ large enough, the previous estimate holds for all $i \geq I$, the inequality
\[
 \limsup_{i\to \infty} {^d E}^{u_i}[B_{r/2}] \leq {^d E}^{w_0}[B_{r/2}]
\]then implies the result.

\end{proof}

\section{Monotonicity and removable singularity theorem} \label{section:removable-singularity}

We first show the removable singularity theorem for harmonic maps into small balls.

\begin{theorem}\label{theorem:minimizer}
Let $u: B_r(p)\setminus \{ p \} \to  \mathcal{B}_\rho(P)\subset  X$ be a finite energy harmonic map, where $\rho$ is as in Lemma \ref{dir}. Then $u$ can be extended on $B_r(p)$ as the unique energy minimizer among all maps in $W_u^{1,2}(B_r(p),  \mathcal{B}_\rho(P))$. 
\end{theorem}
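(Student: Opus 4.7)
The plan is to identify $u$ with the Dirichlet solution $v \in W^{1,2}_u(B_r(p), \overline{\mathcal{B}_\rho(P)})$ produced by Lemma~\ref{dir}, and thereby obtain the extension. First, I need the trace of $u$ on $\partial B_r(p)$ to be well-defined: on a neighborhood of $\partial B_r(p)$ that avoids $p$, the regularity part of Lemma~\ref{dir} (applied on small balls centered at points near the boundary) gives local Lipschitz control of $u$, so the standard $W^{1,2}$ trace theory applies. Using this trace as Dirichlet data, Lemma~\ref{dir} yields a unique locally Lipschitz energy minimizer $v$. The proof then reduces to showing $u = v$ almost everywhere on $B_r(p) \setminus \{p\}$; once that is done, $v$ serves as the desired extension.

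To compare $u$ and $v$, I will use that both are harmonic maps on $B_r(p) \setminus \{p\}$ into $\overline{\mathcal{B}_\rho(P)}$, and invoke the CAT(1) subharmonicity result cited in the introduction (Theorem~\ref{theorem:subharmonicity} in Appendix~\ref{energy-convexity}). This gives a weak differential inequality for a monotone function $\phi$ of $d(u,v)$, e.g.\ $\phi = d^2(u,v)$ or the Serbinowski-type expression appearing in the appendix. The relevant features are that $\phi \geq 0$, that $\phi$ is uniformly bounded (since both maps take values in $\overline{\mathcal{B}_\rho(P)}$ with $\rho < \pi/4$, so $d(u,v) \leq 2\rho < \pi/2$), and that the trace of $\phi$ on $\partial B_r(p)$ vanishes because $u$ and $v$ share the same trace there.

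Next I will remove the singularity at $p$. Since $\dim \Omega \geq 2$, the point $\{p\}$ has zero $2$-capacity in $\Omega$, so there exist logarithmic-type cutoffs $\eta_k \in C^\infty_c(B_r(p) \setminus \{p\})$ with $\eta_k \to 1$ almost everywhere on $B_r(p)$ and $\int |\nabla \eta_k|^2 \to 0$. Testing the weak inequality against $\eta_k^2 \phi$ (or against a Moser-type truncation $\eta_k^2 (\phi - t)_+$) yields a Caccioppoli estimate that passes to the limit $k \to \infty$, giving the same weak inequality on all of $B_r(p)$. The weak maximum principle then forces $\phi \leq \sup_{\partial B_r(p)} \phi = 0$, and nonnegativity of $\phi$ gives $\phi \equiv 0$. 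Hence $u = v$ almost everywhere, and Lemma~\ref{dir} identifies the extension as the unique energy minimizer.

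The main obstacle I anticipate is the removable-singularity step. Theorem~\ref{theorem:subharmonicity} produces a CAT(1)-corrected differential inequality rather than literal subharmonicity, so the zeroth-order correction terms must be controlled as the cutoff degenerates at the puncture. The smallness of $\rho$ and the uniform boundedness of $\phi$ should make these terms manageable, but verifying the Caccioppoli estimate against the precise form of the inequality in the appendix---and checking that no logarithmic divergence is introduced by the cutoff---is the delicate step.
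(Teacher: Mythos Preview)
Your proposal is correct and follows essentially the same route as the paper: compare $u$ with the Dirichlet solution $v$, invoke Theorem~\ref{theorem:subharmonicity} to get a weak divergence-form inequality for the Serbinowski function $F=\sqrt{(1-\cos d)/(\cos R^u\cos R^v)}$, use a logarithmic cutoff at $p$ (zero $2$-capacity) to pass the inequality across the puncture, and conclude by the maximum principle. Two small remarks: the inequality $\operatorname{div}(\cos R^u\cos R^v\,\nabla F)\ge 0$ has no zeroth-order correction terms, so your anticipated obstacle does not arise; and the paper controls the cutoff error via H\"older together with the bound $\int|\nabla F|^2<\infty$ coming from energy convexity (Theorem~\ref{theorem:energy-convexity}), which is the ingredient replacing your Caccioppoli step.
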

\begin{proof}
 Let $v\in W_{u}^{1,2}(B_r(p),  \mathcal{B}_\rho(P))$ minimize the energy. It suffices to  show that $u=v$ on $B_r(p)\setminus \{ p \}$.  Since $u$ is harmonic, there exists a locally finite countable open cover $\{ U_i\}$ of $B_r(p)\setminus \{p\}$, and $\rho_i>0, P_i\in \mathcal{B}_\rho(P)$ such that $u|_{U_i}$ minimizes energy among all maps in $W^{1,2}_{u}(U_i, \mathcal{B}_{\rho_i}(P_i))$. 
 Let 
\[
	F = \sqrt{\frac{1-\cos d}{\cos R^{u} \cos R^{v}}}
\]
where $d(x)  = d(u(x), v(x))$ and $R^u = d(u, P), R^v = d(v, P)$. By Theorem \ref{theorem:subharmonicity}, 
\[
	\textup{div} (\cos R^{u} \cos R^{v} \nabla F)\ge 0
\] 
holds weakly on each $U_i$. Therefore, for a partition of unity $\{ \varphi_i \}$  subordinate to the cover $\{ U_i \}$ and for any test function  $\eta \in C^{\infty}_c(B_r(p)\setminus \{ p \})$,
 \begin{align} \label{equation:subharmonicity}
 	&- \int_{B_r(p)\setminus \{ p \}}  \nabla \eta \cdot  (\cos R^{u} \cos R^{v} \nabla F) \, d\mu_g = - \sum_i\int_{U_i}  \nabla (\varphi_i \eta) \cdot  (\cos R^{u} \cos R^{v} \nabla F) \, d\mu_g \ge 0,
 \end{align}
 where we use $\sum_i \varphi_i = 1$ and $\sum_i \nabla \varphi_i = 0$.  
 
Using polar coordinates in $B_r(p)$ centered at $p$, for $0<\epsilon\ll 1$, we define
 \[
 	\phi_{\epsilon } =\left\{ \begin{array} {ll} 0 & r \le \epsilon^2  \\ \frac{\log r - \log \epsilon^2 }{- \log \epsilon} & \epsilon^2 \le  r \le \epsilon \\ 1 & \epsilon \le r  \end{array} \right..
 \]
Note that 
 \[
 	\int_{B_r(p)} | \nabla \phi_{\epsilon} |^2\, d\mu_g = \frac{2\pi}{(\log \epsilon)^2} \int_{\epsilon^2}^{\epsilon} r^{n-1} \, dr + o (\epsilon)  \to 0 \quad \mbox{ as } \epsilon \to 0.
 \]
 Therefore, for $\eta \in C^\infty_c(B_r(p))$,
 \begin{align*}
 	&-\int_{B_r(p)} \phi_{\epsilon}   \nabla \eta \cdot (\cos R^{u} \cos R^{v} \nabla F)  \, d\mu_g \\
	& =- \int_{B_r(p)} \nabla (\eta \phi_{\epsilon}) \cdot  (\cos R^{u} \cos R^{v} \nabla F) \, d\mu_g + \int_{B_r(p)} \eta \nabla \phi_{\epsilon} \cdot  (\cos R^{u} \cos R^{v} \nabla F) \, d\mu_g\\
	& \ge    \int_{B_r(p)\setminus\{p\}} \eta \nabla \phi_{\epsilon} \cdot   (\cos R^{u} \cos R^{v} \nabla F)\, d\mu_g \qquad (\textup{by } \eqref{equation:subharmonicity})\\
	& \ge - \left(\int_{B_r(p)\setminus \{p\}} |\nabla \phi_{\epsilon}|^2 \, d\mu_g\right)^{\frac{1}{2}} \left( \int_{{B_r(p)}\setminus \{ p \}} \eta^2 |\cos R^{u} \cos R^{v} \nabla F|^2 \, d\mu_g\right)^\frac{1}{2}  \qquad \mbox{ (by H\"{o}lder's inequality)}.
 \end{align*}
The last line converges to zero as $\epsilon\to 0$ because $d, R^{u}, R^{v}$ are bounded by the compactness of $\overline{\mathcal{B}_\rho(P)}$ and $\int_{{B_r(p)}\setminus \{ p \}} |\nabla F|^2\, d\mu_g$ is bounded by energy convexity. We conclude that 
\[
	-\int_{B_r(p)}  \nabla \eta \cdot  (\cos R^{u} \cos R^{v} \nabla F)\, d\mu_g =-\lim_{\epsilon \to 0} \int_{B_r(p)}  \phi_{\epsilon}  \nabla \eta \cdot  (\cos R^{u} \cos R^{v} \nabla F) \, d\mu_g \ge 0,
\]
and hence $\textup{div} (\cos R^{u} \cos R^{v} \nabla F) \ge 0$ holds weakly on ${B_r(p)}$.  

Since $d(u(x), v(x)) = 0$ on $\partial {B_r(p)}$, by the maximum principle  $d(u(x), v(x)) \equiv 0$ in ${B_r(p)}$. This implies that $u\equiv v$ is the unique energy minimizer.

\end{proof}

\begin{remark}
Note that Theorem \ref{theorem:minimizer} implies that if $u:\Omega \to \B_\rho(P)$ is harmonic, then $u$ is energy minimizing.
\end{remark}

From this point on we assume our domain  is of dimension 2. 
Recall the construction in \cite{korevaar-schoen1} and \cite{BFHMSZ} of a continuous, symmetric, bilinear, non-negative tensorial operator 
\begin{equation} \label{pi}
\pi^u: \Gamma(T\Omega) \times \Gamma(T\Omega) \rightarrow L^1(\Omega)
\end{equation}
associated  with a $W^{1,2}$-map $u: \Omega \rightarrow X$ where $\Gamma(T\Omega)$ is the space of Lipschitz vector fields on $\Omega$ defined by 
\[
\pi^u(Z,W) := \frac{1}{4} |u^*(Z+W)|^2-\frac{1}{4}|u^*(Z-W)|^2
\]
where $|u^*(Z)|^2$ is the directional energy density function (cf. \cite[Section 1.8]{korevaar-schoen1}).
This generalizes the notion of the pullback metric for maps into a Riemannian manifold, and hence we shall refer to $\pi=\pi^u$ also as the pullback metric for $u$. 
\begin{definition}
If $\Sigma$ is a Riemann surface, then $u \in W^{1,2}(\Sigma,X)$ is {\em (weakly) conformal} if
\[
    \pi \left( \frac{\partial}{\partial x_1},\frac{\partial}{\partial x_1} \right)  
                        = \pi \left( \frac{\partial}{\partial x_2},\frac{\partial}{\partial x_2} \right) 
                      \mbox{ and } \pi \left( \frac{\partial}{\partial x_1},\frac{\partial}{\partial x_2} \right) =0,
\]
where $z=x_1+ix_2$ is a local complex coordinate on $\Sigma$.  
\end{definition}

For a conformal harmonic map $u: \Sigma  \rightarrow X$ with conformal factor $\lambda=\frac{1}{2}|\nabla u|^2$, and any open sets $S \subset \Sigma$ and $\mathcal{O} \subset X$, define
\[
     A(u(S) \cap \mathcal{O}) :=\int_{u^{-1}(\mathcal{O})\cap S}  \lambda \; d\mu_g,
\]
where $d\mu_g$ is the area element of $(\Sigma,g)$.

\begin{theorem}[Monotonicity] \label{monotonicity}
There exist constants $c, \, C$ such that 
if $u: \Sigma \rightarrow X$ is a non-constant conformal harmonic map from a Riemann surface $\Sigma$ into a CAT(1) space $(X,d)$, then for any $p \in \Sigma$ and 
$0<\sigma < \sigma_0=\min \{ \rho, d(u(p),u(\partial \Sigma)) \}$, the following function is  increasing:
\[
       \sigma \mapsto \frac{e^{c\sigma^2} A(u(\Sigma) \cap \mathcal{B}_\sigma(u(p)))}{\sigma^2},
\]
and 
\[
       A(u(\Sigma) \cap \mathcal{B}_\sigma(u(p))) \geq C\sigma^2.
\]
\end{theorem}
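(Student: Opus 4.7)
The plan is to adapt the classical monotonicity-of-area formula for minimal surfaces in Euclidean space, with the key modification that the Euclidean quadratic function $\tfrac12 |y - P|^2$ is replaced by its spherical analogue $F(y) = 1 - \cos d(y, P)$, which is adapted to the CAT(1) structure of the target. The first step is to establish the weak differential inequality
\[
\Delta (F \circ u) \geq 2 (\cos r)\, \lambda \qquad \text{on } u^{-1}(\mathcal{B}_{\sigma_0}(P)),
\]
where $r(x) = d(u(x), P)$. This is the CAT(1) analogue of the Riemannian identity $\Delta(F \circ u) = \mathrm{Hess}\, F(du, du) + dF(\tau(u))$: for a smooth harmonic map one has $\tau(u) = 0$, while $\mathrm{Hess}\, F \geq (\cos d)\, g$ on balls of radius $< \pi/2$ in a space of curvature $\leq 1$. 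In the CAT(1) setting, this inequality is obtained weakly via the variational characterization of harmonic maps, using Theorem~\ref{theorem:minimizer} (which promotes $u$ to a local energy minimizer into $\overline{\mathcal{B}_\rho(P)}$) and the Serbinowski-style energy convexity statements developed in Appendix~\ref{energy-convexity}.

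Next, I would test this weak inequality against $\phi_\varepsilon \circ r$, where $\phi_\varepsilon$ is a smooth non-increasing cutoff approximating $\chi_{[0, \sigma]}$, and pass to the limit using the coarea formula. For a.e.\ $\sigma \in (0, \sigma_0)$ this yields the boundary inequality
\[
\sin(\sigma) \int_{\{r = \sigma\}} |\nabla r|\, d\mathcal{H}^1 \;\geq\; 2 \int_{\{r < \sigma\}} (\cos r)\, \lambda\, d\mu_g \;\geq\; 2 \cos(\sigma)\, A(\sigma).
\]
The conformality of $u$ now enters through the pointwise estimate $|\nabla r|^2 \leq \lambda$: since $r = d(\cdot, P)$ is $1$-Lipschitz on $X$ and the pair $du(e_1), du(e_2)$ forms an orthogonal ``frame'' of length $\sqrt{\lambda}$ in the tangent cone at $u(x)$, the directional energy of $r \circ u$ is bounded by the squared components of a unit covector in that frame, giving $\sum_i |\partial_i(r\circ u)|^2 \leq \lambda$. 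Combining this with the coarea identity $A'(\sigma) = \int_{\{r=\sigma\}} \lambda/|\nabla r|\, d\mathcal{H}^1$ gives $A'(\sigma) \geq \int_{\{r=\sigma\}} |\nabla r|\, d\mathcal{H}^1$, and hence the differential inequality
\[
\sin(\sigma)\, A'(\sigma) \;\geq\; 2\cos(\sigma)\, A(\sigma),
\]
which integrates to the monotonicity of $A(\sigma)/\sin^2(\sigma)$. The claimed monotonicity of $e^{c\sigma^2} A(\sigma)/\sigma^2$ then follows from the expansion $\sin^2(\sigma) = \sigma^2(1 - \sigma^2/3 + \cdots)$, since $\sigma^2/\sin^2(\sigma) \leq e^{c\sigma^2}$ for a suitable constant $c > 0$ and all $\sigma \in (0, \sigma_0)$.

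For the lower bound $A(\sigma) \geq C \sigma^2$, the monotonicity reduces matters to a uniform positive lower bound on the density $\Theta(p) := \lim_{\sigma \to 0^+} A(\sigma)/\sigma^2$. Since $u$ is non-constant and conformal, a small-scale blow-up using the local Lipschitz regularity (Lemma~\ref{dir}) together with the energy-minimizing property at small scales forces $\Theta(p)$ to be bounded below by a universal constant.

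\textbf{Main obstacle.} The most delicate step is the weak subharmonicity of $F \circ u$, since the chain rule for the Laplacian of a composition is unavailable for metric space targets; one must instead derive the inequality by constructing suitable comparison variations and invoking the quadrilateral and convexity estimates of Appendices~\ref{section:quadrilateral-estimates} and~\ref{energy-convexity}. A secondary technical point is the rigorous justification of $|\nabla r|^2 \leq \lambda$ in terms of the Korevaar--Schoen pullback tensor $\pi^u$, which requires interpreting the gradient of $r = d(\cdot, P)$ as a unit covector in the tangent cone of the CAT(1) target.
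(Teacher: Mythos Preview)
Your approach to the monotonicity part is essentially the paper's, with two cosmetic differences. You work with $F = 1 - \cos r$ whereas the paper works with $R^2/2$; the associated weak differential inequalities are equivalent up to $O(r^4)$ terms, and the paper simply cites $\tfrac12 \Delta R^2 \geq (1 - O(R^2))|\nabla u|^2$ from \cite[Proposition 1.17]{serbinowski}, \cite[Lemma~4.3]{BFHMSZ} rather than rederiving it from Appendix~\ref{energy-convexity}. You use the coarea formula to pass to a boundary integral, whereas the paper differentiates the smoothed quantity $f(\sigma) = \int_\Sigma \zeta(R/\sigma)\,\lambda$ in $\sigma$; these are standard equivalent devices. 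Both arrive at an inequality of the form $2(1 - O(\sigma^2))A(\sigma) \leq \sigma A'(\sigma)$, and your observation that one actually gets monotonicity of $A(\sigma)/\sin^2\sigma$ (hence of $e^{c\sigma^2}A(\sigma)/\sigma^2$) is correct.

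The lower-bound argument, however, has a genuine gap. You assert that a blow-up ``forces $\Theta(p)$ to be bounded below by a universal constant,'' but you do not say what the blow-up limit is in the singular-target setting, and more importantly you do not address the case $\lambda(p) = 0$, where no nontrivial blow-up is available at $p$ itself. The paper proceeds differently: it first shows $\Theta(x) = 1$ for a.e.\ $x$ by combining the Lebesgue--Besicovitch differentiation theorem with a Rademacher-type argument (using the local Lipschitz regularity of $u$ and conformality to upgrade the a.e.\ directional limit $d(u(x+t\omega),u(x))/t \to \sqrt{\lambda(x)}$ to a uniform-in-direction limit, so that $u(B_r(x))$ is sandwiched between target balls of radius $\approx \sqrt{\lambda(x)}\,r$). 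It then observes that, by the monotonicity already established, $\Theta$ is defined at \emph{every} point and is upper semicontinuous as a decreasing limit of continuous functions; hence $\Theta \geq 1$ everywhere. This upper-semicontinuity step is precisely what transports the lower bound across the zero set of $\lambda$, and it is the idea missing from your sketch.
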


\begin{proof}
Since $\Sigma$ is locally conformally Euclidean and the energy is conformally invariant, without loss of generality, we may assume that the domain is Euclidean. Fix $p \in \Sigma$ and let $R(x)=d(u(x), u(p))$. 
Since $u$ is continuous and locally energy minimizing, by {\cite[Proposition 1.17]{serbinowski}}, {\cite[Lemma 4.3]{BFHMSZ}} we have that the following differential inequality holds weakly on $u^{-1} (\mathcal{B}_\rho(u(p)))$:
\begin{equation} \label{inequality}
      \frac{1}{2} \Delta R^2 \geq (1-O(R^2)) |\nabla u|^2.
\end{equation}

Let $\zeta: \mathbb{R^+} \rightarrow \mathbb{R}^+$ be any smooth nonincreasing function such that $\zeta(t)=0$ for $t \geq 1$, and let $\zeta_\sigma(t)=\zeta(\frac{t}{\sigma})$. By (\ref{inequality}), 
for $\sigma < \sigma_0$ we have
\begin{align*}
     -\int_\Sigma \nabla R^2 \cdot \nabla (\zeta_\sigma ( R)) \, dx_1dx_2 
    & \geq 2 \int_\Sigma  \zeta_\sigma ( R) \, (1-O(R^2)) |\nabla u|^2 \, dx_1dx_2 \\
    & = 4 \int_\Sigma \zeta_\sigma( R) \, (1-O(R^2)) \, \lambda \, dx_1dx_2.
\end{align*}
Therefore,
\begin{align*}
     2\int_\Sigma \zeta_\sigma ( R) \, (1-O(R^2)) \, \lambda \, dx_1dx_2
     & \leq - \int_\Sigma R \, \nabla R \cdot \nabla (\zeta_\sigma (R)) \, dx_1dx_2  \\
     &= -\int_\Sigma \frac{R}{\sigma} \, \zeta' \left(\frac{R}{\sigma}\right) |\nabla R|^2 \, dx_1dx_2 \\
    &\leq -\int_\Sigma \frac{R}{\sigma} \, \zeta'\left(\frac{R}{\sigma}\right) \frac{1}{2} |\nabla u|^2 \, dx_1dx_2 \\
    &= -\int_\Sigma \frac{R}{\sigma} \zeta'\left(\frac{R}{\sigma}\right) \, \lambda \, dx_1dx_2 \\
    &= \int_\Sigma \sigma \, \frac{d}{d\sigma} (\zeta_\sigma ( R)) \; \lambda \, dx_1dx_2 \\
    &= \sigma \frac{d}{d\sigma} \int_\Sigma \zeta_\sigma ( R) \; \lambda \, dx_1dx_2, 
\end{align*}
where in the second inequality we have used that $\zeta' \leq 0$ and $|\nabla R|^2 \leq \frac{1}{2} |\nabla u|^2$, since $u$ is conformal.
Set $f(\sigma) = \int_\Sigma \zeta_\sigma (R) \, \lambda \, dx_1dx_2$. We have shown that
\[
       2 ( 1- O(\sigma^2)) f(\sigma) \leq \sigma f'(\sigma).
\]
Integrating this, we conclude that there exist $c>0$ such that the function
\begin{equation} \label{monotone}
       \sigma \mapsto \frac{e^{c\sigma^2} f(\sigma)}{\sigma^2}
\end{equation} 
is increasing for all $0 < \sigma < \sigma_0$. Approximating the characteristic function of $[-1,1]$, and letting $\zeta$ be the restriction to $\mathbb R^+$, it then follows that 
\[
      \frac{e^{c\sigma^2} A(u(\Sigma) \cap \mathcal{B}_\sigma(u(p)))}{\sigma^2}
\]
is increasing in $\sigma$ for $0<\sigma<\sigma_0$.

Since $\lambda=\frac{1}{2}|\nabla u|^2 \in L^1(\Sigma,\mathbb{R})$, 
\begin{equation} \label{density}
         \lim_{r \rightarrow 0} \frac{ \int_{B_r(x)} \lambda \, dx_1dx_2}{\pi r^2} = \lambda(x), 
         \quad \mbox{ a.e. } x \in \Sigma
\end{equation}
by the Lebesgue-Besicovitch Differentiation Theorem. 
Since $u$ is conformal, for every $\omega \in \mathbb{S}^1$,
\begin{equation} \label{directional}
        \lambda(x) =  \lim_{t \rightarrow 0} \frac{d^2(u(x+t\omega),u(x))}{t^2},
        \quad \mbox{ a.e. } x \in \Sigma
\end{equation}
(\cite[Theorem 1.9.6 and Theorem 2.3.2]{korevaar-schoen1}). Since $u$ is locally Lipschitz \cite[Theorem 1.2]{BFHMSZ}, by an argument as in the proof of Rademacher's Theorem (\cite[p. 83-84]{evans-gariepy}), 
\begin{equation}  \label{derivative}
        \lambda(x) =  \lim_{y \rightarrow x} \frac{d^2(u(y),u(x))}{|y-x|^2}
\end{equation}
for almost every $x \in \Sigma$.
To see this, choose $\{\omega_k\}_{k=1}^\infty$ to be a countable, dense subset of $\mathbb{S}^1$. Set
\[
      S_k=\{ x \in \Sigma \, : \lim_{t \rightarrow 0} \frac{ d(u(x+t\omega_k), u(x))}{t} \mbox{ exists, and is equal to}\, 
      \sqrt{\lambda(x)}  \}
\]
for $k=1, 2, \ldots$ and let
\[
         S=\cap_{k=1}^\infty S_k.
\]
Observe that $\mathcal{H}^2(\Sigma \setminus S)=0$. Fix $x \in S$, and
let $\varepsilon >0$. Choose $N$ sufficiently large such that if $\omega \in \mathbb{S}^1$ then
\[
       |\omega - \omega_k| < \frac{\varepsilon}{2 \mbox{Lip}(u)}
\]
for some $k \in \{ 1, \ldots, N\}$. Since
\[
     \lim_{t \rightarrow 0} \frac{ d(u(x+t\omega_k), u(x))}{t} = \sqrt{\lambda(x)}
\]
for $k=1, \ldots, N$, there exists $\delta >0$ such that if $|t|<\delta$ then
\[
       \left| \frac{d(u(x+t\omega_k),u(x))}{t} - \sqrt{\lambda(x)} \right| < \frac{\varepsilon}{2}
\]
for $k=1, \ldots, N$. Consequently, for each $\omega \in \mathbb{S}^1$ there exists $k \in \{1, \ldots, N\}$ such that 
\begin{align*}
      & \left| \frac{d(u(x+t\omega),u(x))}{t} - \sqrt{\lambda(x)} \right| \\
      & \leq \left| \frac{d(u(x+t\omega_k),u(x))}{t} - \sqrt{\lambda(x)}\right|
      +\left| \frac{d(u(x+t\omega),u(x))}{t} -\frac{d(u(x+t\omega_k),u(x))}{t} \right| \\
      & \leq \left|\frac{d(u(x+t\omega_k),u(x))}{t} - \sqrt{\lambda(x)} \right|
      +\left| \frac{d(u(x+t\omega),u(x+t\omega_k))}{t} \right| \\
      & < \frac{\varepsilon}{2} + \mbox{Lip}(u) |\omega -\omega_k| \\
      & < \varepsilon.
\end{align*}
Therefore the limit in (\ref{derivative}) exists, and (\ref{derivative}) holds, for almost every $x \in \Sigma$.
    
The zero set of $\lambda$ is of Hausdorff dimension zero by \cite{mese}. At points where $\lambda(x) \neq 0$ and (\ref{derivative}) holds, we have that for any $\varepsilon >0$
\[
      \mathcal{B}_{(1-\varepsilon) \sqrt \lambda \, r}(u(x)) \subset u(B_r(x)) 
      \subset \mathcal{B}_{(1+\varepsilon) \sqrt \lambda \, r}(u(x))
\]
if $r$ is sufficiently small. Therefore by (\ref{density}),
\begin{equation} \label{limit}
         \Theta(x):=\lim_{\sigma \rightarrow 0} \frac{ A(u(\Sigma) \cap \mathcal{B}_\sigma(u(x)))}{\pi \sigma^2} = 1,
         \quad \mbox{ a.e. } x \in \Sigma.
\end{equation}
By the monotonicity of (\ref{monotone}), $\Theta(x)$ exists for every $x \in \Sigma$, and $\Theta(x)$ is upper semicontinuous since it is a limit of continuous functions (the density at a given radius is a continuous function of $x$). Therefore, $\Theta(x) \geq 1$ for every $x \in \Sigma$. Together with the monotonicity of (\ref{monotone}), it follows that
\[
        A(u(\Sigma) \cap \mathcal{B}_\sigma(u(p))) \geq C\sigma^2
\]          
for $0<\sigma<\sigma_0$.
\end{proof}

\begin{remark}
Note that if $u: M \rightarrow \mathcal{B}_\rho(P)$ is a harmonic map from a compact Riemannian manifold $M$, then $u$ must be constant. This follows from the maximum principle, since equation (\ref{inequality}) implies that $R^2(x)=d^2(u(x),P)$ is subharmonic.
\end{remark}

For a {\em conformal} harmonic map from a surface into a Riemannian manifold, continuity follows easily using monotonicity (\cite[Theorem 10.4]{schoen}, \cite{gruter}, \cite[Theorem 9.3.2]{jost}). By Theorem \ref{monotonicity}, using this idea we can prove the following removable singularity result for conformal harmonic maps into a CAT(1) space.

\begin{theorem}[Removable singularity] \label{singularity}
If $u: \Sigma \setminus \{p\} \rightarrow X$ is a conformal harmonic map of finite energy, then $u$ extends to a locally Lipschitz harmonic map $u: \Sigma \rightarrow X$.
\end{theorem}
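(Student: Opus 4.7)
The plan is to extend $u$ across the puncture by first establishing continuity at $p$ and then applying Theorem~\ref{theorem:minimizer} on a small disk to obtain energy minimization and, via Lemma~\ref{dir}, local Lipschitz regularity. The main obstacle is the continuity step, which I attack by combining the smallness of $u$-energy near $p$ (via conformality) with the monotonicity formula of Theorem~\ref{monotonicity}.

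For continuity, I exploit two consequences of the hypotheses. Because $u$ is conformal, total area equals $\tfrac12 E^u < \infty$, so $\int_{B_r(p)\setminus\{p\}} \lambda \to 0$ as $r \to 0$. Writing the energy in polar coordinates centered at $p$, the conformal identity $|u_r|^2 r = |u_\theta|^2/r$ converts the energy on a dyadic annulus $B_r(p)\setminus B_{r/2}(p)$ into $2\int_{r/2}^{r} \int_0^{2\pi} |u_\theta|^2/s\, d\theta\, ds$; a mean-value argument on $s$ then furnishes a sequence of good radii $r_n \to 0$ for which $\int_0^{2\pi} |u_\theta(r_n,\theta)|^2\, d\theta$ is small, so by Cauchy--Schwarz the length (and hence the diameter $\eta_n$) of $u(\partial B_{r_n}(p))$ tends to zero. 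Fix any $Q_n \in u(\partial B_{r_n}(p))$.

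Next, for an arbitrary $x \in B_{r_n/2}(p)\setminus\{p\}$, I would invoke Theorem~\ref{monotonicity} at $x$ with domain $B_{r_n}(p)\setminus\{p\}$, handling the puncture by multiplying the test function in the weak differential inequality by a cutoff $\phi_\eta$ vanishing near $p$ and letting $\eta \to 0$, as in the proof of Theorem~\ref{theorem:minimizer}. Setting $\sigma := \min\{\rho, d(u(x), u(\partial B_{r_n}(p)))\}$, monotonicity yields
\[
C\sigma^2 \;\leq\; A\bigl(u(B_{r_n}(p)) \cap \mathcal B_\sigma(u(x))\bigr) \;\leq\; \int_{B_{r_n}(p)\setminus\{p\}} \lambda \;\longrightarrow\; 0,
\]
so $\sigma \to 0$ uniformly in $x$. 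Combining with $\operatorname{diam} u(\partial B_{r_n}(p)) \leq \eta_n$ gives $d(u(x), Q_n) \leq \sigma + \eta_n \to 0$ uniformly in $x \in B_{r_n/2}(p)\setminus\{p\}$. In particular $\{Q_n\}$ is Cauchy in $X$, converges to some $P$, and $u$ admits the continuous extension $u(p) := P$.

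The remaining steps are short. Continuity at $p$ ensures that for sufficiently small $r$ one has $u(B_r(p)) \subset \mathcal B_\rho(P)$, where $\rho$ is as in Lemma~\ref{dir}. Theorem~\ref{theorem:minimizer} then identifies $u$ on $B_r(p)\setminus\{p\}$ with the unique energy minimizer in $W^{1,2}_u(B_r(p), \mathcal B_\rho(P))$, producing a harmonic extension on $B_r(p)$; by Lemma~\ref{dir} this extension is locally Lipschitz. Away from $p$ the harmonic map $u$ was already locally Lipschitz, so the extended map $u\colon \Sigma \to X$ is a locally Lipschitz harmonic map. The principal delicacy throughout is the proper treatment of the puncture in the integration-by-parts underlying Theorem~\ref{monotonicity}, but this is controlled by the finite-energy hypothesis in exactly the same way as in the proof of Theorem~\ref{theorem:minimizer}.
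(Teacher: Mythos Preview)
Your proposal is correct and follows essentially the same strategy as the paper: establish continuity at $p$ via Courant--Lebesgue plus the monotonicity formula, then invoke Theorem~\ref{theorem:minimizer} and the Lipschitz regularity of Dirichlet solutions. The only notable difference is in how monotonicity is deployed. The paper works on \emph{annuli} $B_{r_i}(p)\setminus \bar B_{r_j}(p)$ between consecutive good radii, so that both boundary components are genuine circles on which $u$ is defined and controlled; this sidesteps the puncture entirely and lets Theorem~\ref{monotonicity} apply verbatim. Your version applies monotonicity directly on the punctured disk $B_{r_n}(p)\setminus\{p\}$, which forces the extra cutoff step you flag in order to push the weak inequality $\tfrac12\Delta R^2 \geq (1-O(R^2))|\nabla u|^2$ across $p$. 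That step does go through (the error term coming from $\nabla\phi_\eta\cdot\nabla R^2$ vanishes by Cauchy--Schwarz and finite energy, exactly as in Theorem~\ref{theorem:minimizer}), and your route is arguably more direct since it yields uniform smallness of $d(u(x),Q_n)$ in one shot rather than the paper's two-stage argument (first $u(C_{r_i})\to P$, then $u(x_i)\to P$ for arbitrary $x_i\to p$). The paper's annulus formulation is cleaner in that it avoids reopening the proof of Theorem~\ref{monotonicity}, but both implementations are sound.
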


\begin{proof}
Let $B_r$ denote $B_r(p)$, the geodesic ball of radius $r$ centered at the point $p$ in $\Sigma$, and let $C_r=\partial B_r$ denote the circle of radius $r$ centered at $p$. By the Courant-Lebesgue Lemma, there exists a sequence $r_i \searrow 0$ so that 
\[
      L_i=L(u(C_{r_i})):=\int_{C_{r_i}} \sqrt{\lambda} \, ds_g \rightarrow 0
\]
as $i \rightarrow \infty$, where $ds_g$ denotes the induced measure on $C_{r_i}=\partial B_{r_i}$ from the metric $g$ on $\Sigma$. Since $E(u) < \infty$, $\lambda = \frac{1}{2} |\nabla u|^2$ is an $L^1$ function and, by the Dominated Convergence Theorem, 
\[
      A_i=A(u(B_{r_i} \setminus \{p\})):=\int_{B_{r_i} \setminus \{p\}} \lambda \, d\mu_g \rightarrow 0 
\]
as $i \rightarrow \infty$.

First we claim that there exists $P \in X$ such that $u(C_{r_i}) \rightarrow P$ with respect to the Hausdorff distance as $i \rightarrow \infty$. Let $d_{i,j}=d( u(C_{r_i}), u(C_{r_j}))$. Suppose $i < j$ so $r_i > r_j$, and choose $Q \in u(B_{r_i} \setminus \bar{B}_{r_j})$ such that $d(Q, u(C_{r_i}) \cup u(C_{r_j})) \geq d_{i,j}/2$. For $\sigma =\min\{\frac{d_{i,j}}{3},\frac{\rho}{2}\}$, by monotonicity (Theorem \ref{monotonicity}),
\[
      A(u(B_{r_i} \setminus \bar{B}_{r_j}) \cap \mathcal{B}_\sigma(Q)) \geq C \sigma^2.
\]
Since $A(u(B_{r_i} \setminus \bar{B}_{r_j}) \cap \mathcal{B}_r (Q)) \leq A(u(B_{r_i} \setminus \{p\}))=A_i$, it follows that $\sigma \leq c \sqrt{A_i}  \rightarrow 0$ as $i \rightarrow \infty$, and we must have $d_{i,j} \rightarrow 0$. Therefore any sequence of points $P_i \in u(C_{r_i})$ is a Cauchy sequence since
\[
     d(P_i,P_j) \leq d_{i,j} + L_i + L_j \rightarrow 0
\]
as $i,\, j \rightarrow \infty$. Hence, there exists $P \in X$ independent of the sequence, such that $P_i \rightarrow P$.

Finally, we claim that $\lim_{x \rightarrow p} u(x) =P$. It follows from this that  we may extend $u$ continuously to $\Sigma$ by defining $u(p)=P$. To prove the claim, consider a sequence $x_i \in \Sigma \setminus\{p\}$ such that $x_i \rightarrow p$. We want to show that $u(x_i) \rightarrow P$. Suppose $x_i \in B_{r_{j(i)}} \setminus \bar{B}_{r_{j(i)+1}}$ for some $j(i)$, and let $d_i=d(u(x_i), u(C_{r_{j(i)}}) \cup u(C_{r_{j(i)+1}}))$. For $\sigma =\min\{ \frac{d_i}{3},\frac{\rho}{2}\}$, by monotonicity (Theorem \ref{monotonicity}),
\[
        A(u(B_{r_{j(i)}} \setminus \bar{B}_{r_{j(i)+1}}) \cap \mathcal{B}_\sigma(u(x_i))) \geq C \sigma^2.
\]
Therefore, $\sigma< c \sqrt{A_{j(i)}} \rightarrow 0$ as $i \rightarrow \infty$, and we must have $d(u(x_i), u(C_{r_{j(i)}}) \cup u(C_{r_{j(i)+1}})) \rightarrow 0$. It follows that $u(x_i) \rightarrow P$ and $u$ extends continuously to $\Sigma$. 

We may now apply Theorem~\ref{theorem:minimizer} to show that $u$ is energy minimizing at $p$. Since $u$ is continuous, there exists $\delta >0$ such that $u(B_\delta) \subset \mathcal{B}_\rho(Q) \subset X$. By Theorem \ref{theorem:minimizer}, $u$ is the unique energy minimizer in $W_u^{1,2}(B_\delta, \mathcal{B}_\rho(Q))$. Hence $u$ is locally energy minimizing on $\Sigma$ and by \cite[Theorem 1.2]{BFHMSZ}, $u$ is locally Lipschitz on $\Sigma$.
\end{proof}

The following is derived using only domain variations as in \cite[Lemma 1.1]{schoen} (using \cite[Theorem 2.3.2]{korevaar-schoen1} to justify the computations involving change of variables) and is independent of the curvature of the target space (see for example, \cite[(2.3) page 193]{gromov-schoen}).

\begin{lemma} \label{holomorphic}
Let $u: \Sigma \rightarrow X$ be a harmonic map from a Riemann surface into a CAT(1) space. The Hopf differential
\[
     \Phi(z)=\left[ \pi \left( \frac{\partial}{\partial x_1},\frac{\partial}{\partial x_1} \right)  
                        - \pi \left( \frac{\partial}{\partial x_2},\frac{\partial}{\partial x_2} \right) 
                      -2i \pi \left( \frac{\partial}{\partial x_1},\frac{\partial}{\partial x_2}\right) \right] \, dz^2,
\]
where $z=x_1+ix_2$ is a local complex coordinate on $\Sigma$ and $\pi$ is the pull-back inner product, is holomorphic. 
\end{lemma}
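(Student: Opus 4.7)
The plan is to derive the holomorphicity of $\Phi$ from the first variation of energy under compactly supported diffeomorphisms of the domain, following the classical argument of Schoen~\cite[Lemma 1.1]{schoen}. Since the statement is local, I would work in an isothermal coordinate chart with complex coordinate $z = x_1 + ix_2$. Given any Lipschitz vector field $V$ supported in a small coordinate neighborhood, let $\phi_t$ denote its flow and set $u_t := u \circ \phi_t$. For $t$ small the comparison maps $u_t$ agree with $u$ outside the support of $V$ and, by continuity, still map a small ball containing $\mathrm{supp}(V)$ into the small target ball on which $u$ is energy minimizing; hence they are admissible competitors, and local energy minimality of $u$ forces $\frac{d}{dt}\big|_{t=0} E(u_t) = 0$.

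The next step is to express this derivative in terms of $\pi = \pi^u$. Applying the Korevaar--Schoen change-of-variables formula~\cite[Theorem 2.3.2]{korevaar-schoen1} concentrates all of the $t$-dependence in a smooth coefficient, giving
\[
E(u_t) \;=\; \int \pi_{ij}(y)\, A_t^{ij}(y)\, dy, \qquad A_t^{ij}(y) \;=\; \bigl(\partial_\alpha \phi_t^i\, \partial_\alpha \phi_t^j\bigr)\!\bigl(\phi_t^{-1}(y)\bigr)\,\bigl|\det D\phi_t^{-1}(y)\bigr|,
\]
with $A_0^{ij} = \delta^{ij}$. Differentiating at $t=0$ and specializing in turn to $V = \eta\,\partial_1$ and $V = \eta\,\partial_2$ for arbitrary $\eta \in C^\infty_c$, then integrating by parts, yields the two distributional identities
\[
\partial_1(\pi_{11} - \pi_{22}) + 2\,\partial_2 \pi_{12} \;=\; 0, \qquad \partial_2(\pi_{11} - \pi_{22}) - 2\,\partial_1 \pi_{12} \;=\; 0.
\]
These are precisely the Cauchy--Riemann equations for the $L^1_{\mathrm{loc}}$ function $(\pi_{11} - \pi_{22}) - 2i\,\pi_{12}$, which is the coefficient of $dz^2$ in $\Phi$. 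A standard application of Weyl's lemma then upgrades weak holomorphicity to genuine holomorphicity of $\Phi$.

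The main point to watch is that $\pi$ is only $L^1$, so one cannot naively differentiate inside the energy integral. This is precisely the role of the Korevaar--Schoen change of variables: after moving the integration variable via $\phi_t$, the tensor $\pi$ no longer depends on $t$, and the remaining $t$-dependence sits in $A_t^{ij}$, which is smooth in $t$ and bounded uniformly in $y$. With that reduction the derivative at $t=0$ passes under the integral without difficulty. Observe that no curvature hypothesis on $X$ enters the argument: one uses only the local energy minimality of $u$ and the tensorial calculus of \cite{korevaar-schoen1, BFHMSZ}, which is why the lemma holds verbatim in the CAT(1) setting.
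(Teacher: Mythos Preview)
Your proposal is correct and follows precisely the approach the paper indicates: the paper does not give a detailed proof but simply remarks that the result is derived ``using only domain variations as in \cite[Lemma 1.1]{schoen} (using \cite[Theorem 2.3.2]{korevaar-schoen1} to justify the computations involving change of variables) and is independent of the curvature of the target space.'' Your write-up is essentially a careful expansion of exactly that sketch, including the key observation that the change-of-variables formula isolates the $t$-dependence in smooth coefficients so that differentiation under the integral is justified despite $\pi$ being only $L^1$.
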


\begin{corollary} \label{removable-singularity}
Let $u:\mathbb{C} \rightarrow X$ be a harmonic map of finite energy, then  $u$ extends to a locally Lipschitz harmonic map $u:\mathbb{S}^2 \rightarrow X$.  
\end{corollary}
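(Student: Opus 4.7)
The plan is to combine Lemma \ref{holomorphic} and Theorem \ref{singularity}. I would first regard $u$ as a map on $\mathbb{S}^2 \setminus \{N\}$ via the conformal stereographic projection $\sigma: \mathbb{S}^2 \setminus \{N\} \to \mathbb{C}$; since both the energy and the notion of harmonic map are conformally invariant in dimension two, $u \circ \sigma$ is still a finite energy harmonic map. If I can show it is also weakly conformal, then Theorem \ref{singularity} applied with $\Sigma = \mathbb{S}^2$ and $p = N$ immediately provides the desired locally Lipschitz harmonic extension to $\mathbb{S}^2$.

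The key step is thus to prove weak conformality of $u$ on $\mathbb{C}$, or equivalently that its Hopf differential vanishes identically. By Lemma \ref{holomorphic}, writing $\Phi = \phi(z)\, dz^2$ in the standard coordinate $z = x_1 + i x_2$, the coefficient $\phi$ is a holomorphic function on $\mathbb{C}$. Starting from the explicit expression
\[
\phi = \pi\!\left(\tfrac{\partial}{\partial x_1},\tfrac{\partial}{\partial x_1}\right) - \pi\!\left(\tfrac{\partial}{\partial x_2},\tfrac{\partial}{\partial x_2}\right) - 2i\, \pi\!\left(\tfrac{\partial}{\partial x_1},\tfrac{\partial}{\partial x_2}\right),
\]
and using $|\nabla u|^2 = \pi(\partial_{x_1},\partial_{x_1}) + \pi(\partial_{x_2},\partial_{x_2})$ together with the Cauchy--Schwarz bound $\pi(\partial_{x_1},\partial_{x_2})^2 \le \pi(\partial_{x_1},\partial_{x_1})\, \pi(\partial_{x_2},\partial_{x_2})$ for the pullback tensor, one obtains the pointwise estimate $|\phi(z)| \le |\nabla u|^2(z)$ almost everywhere. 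Since $\int_{\mathbb{C}} |\nabla u|^2\, dx_1 dx_2 = E(u) < \infty$, this places $\phi$ in $L^1(\mathbb{C})$.

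To conclude that an entire function in $L^1(\mathbb{C})$ is identically zero, I would apply the mean value property to $\phi$ over a disc of radius $R$ centered at an arbitrary $z_0$:
\[
|\phi(z_0)| \le \frac{1}{\pi R^2}\int_{B_R(z_0)} |\phi|\, dx_1 dx_2 \le \frac{\|\phi\|_{L^1(\mathbb{C})}}{\pi R^2},
\]
which tends to zero as $R \to \infty$. Hence $\phi \equiv 0$, i.e.\ $\Phi \equiv 0$, which by definition is the weak conformality of $u$, and the argument concludes via Theorem \ref{singularity} as above.

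I expect the only non-routine point to be the verification of the almost-everywhere pointwise bound $|\phi| \le |\nabla u|^2$ in the metric-space setting, where the pullback tensor $\pi$ is defined only via directional energies as in \cite{korevaar-schoen1} and \cite{BFHMSZ}; once this bound is in hand, the $L^1$-Liouville step and the invocation of Theorem \ref{singularity} are immediate.
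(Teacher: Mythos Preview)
Your proposal is correct and follows essentially the same approach as the paper: pull back via stereographic projection, use Lemma~\ref{holomorphic} to see the Hopf differential coefficient $\phi$ is entire, show $\phi\in L^1(\mathbb{C})$ from the finite-energy bound, conclude $\phi\equiv 0$, and then apply Theorem~\ref{singularity}. The only cosmetic differences are that the paper obtains the integrability via the cruder estimate $\int_{\mathbb{C}}|\phi|\le 2E(u)$ and concludes vanishing by noting $|\phi|$ is subharmonic, whereas you use the sharper pointwise bound $|\phi|\le |\nabla u|^2$ (via Cauchy--Schwarz for $\pi$) and the mean value property---both are standard and equivalent for this purpose.
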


\begin{proof}
Let $p: \mathbb{S}^2 \setminus \{n\} \rightarrow \mathbb{R}^2$ be stereographic projection from the north pole $n \in \mathbb{S}^2$. Set $\hat{u}=u \circ p : \mathbb{S}^2 \setminus \{n\} \rightarrow X$. We will show that $n$ is a removable singularity.

Let $\varphi=\pi ( \frac{\partial}{\partial x_1},\frac{\partial}{\partial x_1} )  
                        - \pi ( \frac{\partial}{\partial x_2},\frac{\partial}{\partial x_2} ) 
                      -2i \pi( \frac{\partial}{\partial x_1},\frac{\partial}{\partial x_2})$.
By Lemma \ref{holomorphic}, the Hopf differential $\Phi(z)=\varphi(z) dz^2$ is holomorphic on $\mathbb{C}$.                       
By assumption, 
\[
     E(u) = \int_{\mathbb{R}^2} \left( \|u_*(\frac{\partial}{\partial x_1}) \|^2 
                  + \|u_*(\frac{\partial}{\partial x_2}) \|^2 \right) \; dx_1 dx_2 < \infty
\]
and therefore
\[
     \int_{\mathbb{R}^2} | \varphi| \; dx_1 dx_2 \leq 2E(u) < \infty.
\]
Thus $|\varphi| \in L^1(\mathbb{C},\mathbb{R})$ and is subharmonic, and hence $\varphi \equiv 0$ and $u$ is conformal. Then by Theorem \ref{singularity}, $u$ extends to a locally Lipschitz harmonic map $u: \mathbb{S}^2 \rightarrow X$.                  
\end{proof}

\section{Harmonic Replacement Construction} \label{section:harmonic-replacement}

In this section we prove the main theorem:

\begin{theorem} 
Let $\Sigma$ be a compact Riemann surface, $X$ a compact locally CAT(1) space and $\varphi \in C^0 \cap W^{1,2}(\Sigma,X)$. Then either there exists a harmonic map $u:\Sigma \rightarrow X$ homotopic to $\varphi$ or a conformal harmonic map $v:\mathbb S^2 \rightarrow X$.
\end{theorem}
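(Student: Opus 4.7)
The plan is to implement the harmonic replacement scheme outlined in the introduction. Starting from $u_0^0 := \varphi$, I inductively define a family $\{u_n^l\}$ as follows. At stage $n$, I determine the largest radius $r_n > 0$ such that for every $p \in \Sigma$, the restriction $u_n^0\big|_{B_{r_n}(p)}$ maps into some target ball $\overline{\mathcal{B}_\rho(P)} \subset X$ with $\rho < \pi/4$, so that Lemma \ref{dir} applies. I cover $\Sigma$ by a finite collection of such balls and partition it into $\Lambda$ subcovers of pairwise disjoint balls, where $\Lambda$ depends only on the geometry of $\Sigma$. At step $l \in \{1,\dots,\Lambda\}$, I define $u_n^l$ by replacing $u_n^{l-1}$ on each ball in the $l$-th subcover with the Dirichlet solution from Lemma \ref{dir}, and leaving the map unchanged elsewhere. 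Setting $u_{n+1}^0 := u_n^\Lambda$, the energies $E(u_n^0)$ form a non-increasing sequence bounded below by $0$, so $E(u_n^0) - E(u_{n+1}^0) \to 0$.

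Next, I analyze the dichotomy $r := \liminf_n r_n \in \{0\} \cup (0,\infty)$. If $r > 0$, I claim the sequence $\{u_n^0\}$ is equicontinuous on $\Sigma$. The interior Lipschitz estimate in Lemma \ref{dir} provides, on every replacement ball, a Lipschitz constant depending only on $E(\varphi)$ and $g$; propagating this across the $\Lambda$-step cycle of disjoint subcovers yields a uniform modulus of continuity. Combined with $E(u_n^0) - E(u_{n+1}^0) \to 0$ and the energy convexity estimates from Appendix \ref{energy-convexity}, this forces the $L^2$-distance between each $u_n^{l-1}$ and its Dirichlet replacement to vanish, so a subsequence converges uniformly to some $u : \Sigma \to X$. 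The limit $u$ is homotopic to $\varphi$: each replacement is homotopic to its predecessor via the geodesic homotopy inside a convex ball $\mathcal{B}_\rho(P)$, and uniform convergence on $\Sigma$ stabilizes the resulting composed homotopies. Finally, applying Lemma \ref{luckhaus} locally on balls of radius $r/2$ shows that $u$ is energy minimizing on each such ball, hence harmonic.

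If $r = 0$, pick a subsequence $n_k$ and points $p_{n_k} \in \Sigma$ witnessing the obstruction, i.e., points near which $u_{n_k}^0$ fails to map a ball of radius only slightly larger than $r_{n_k}$ into a small target ball. Using a conformal chart around $p_{n_k}$, rescale the domain by $1/r_{n_k}$ to obtain maps $\tilde u_{n_k}$ defined on Euclidean balls exhausting $\mathbb{C}$; by conformal invariance of the two-dimensional energy, these rescaled maps have energy bounded by $E(\varphi)$. The rescaled maps are harmonic at the new scale (up to at most $\Lambda$ trailing replacements at the appropriate step), and the interior Lipschitz estimate from Lemma \ref{dir} gives uniform bounds on compact subsets of $\mathbb{C}$. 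A diagonal subsequence converges uniformly on compact sets to a finite-energy harmonic map $v : \mathbb{C} \to X$ by another application of Lemma \ref{luckhaus}. The choice of $p_{n_k}$ guarantees that $v$ is non-constant. Since the Hopf differential $\Phi$ of $v$ is holomorphic on $\mathbb{C}$ by Lemma \ref{holomorphic} and $|\Phi| \in L^1(\mathbb{C})$, it must vanish, so $v$ is conformal. Corollary \ref{removable-singularity} then extends $v$ to the desired conformal harmonic map $\mathbb{S}^2 \to X$.

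The main obstacles I anticipate are threefold. First, establishing equicontinuity of $\{u_n^0\}$ in the $r > 0$ case requires carefully aggregating the Lipschitz bounds across the $\Lambda$-step cycle while exploiting the decay of successive energy drops; the CAT(1) energy convexity in Appendix \ref{energy-convexity} is essential here since no Euler--Lagrange equation is available. Second, in the $r = 0$ case, producing a non-constant bubble requires choosing the rescaling centers $p_{n_k}$ so that a definite amount of energy concentrates on a fixed Euclidean scale; this is the standard but delicate ``picking a bad point'' argument, and here it must be compatible with the discrete replacement scheme rather than a PDE. Third, the homotopy assertion, routine in the smooth setting, requires a careful construction of geodesic homotopies inside the convex balls $\mathcal{B}_\rho(P)$ and verification that the limit homotopy is continuous in the singular target.
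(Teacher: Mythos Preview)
Your overall architecture matches the paper's, but the equicontinuity argument in the $r>0$ case has a genuine gap. You propose to obtain a uniform modulus of continuity by ``propagating the interior Lipschitz estimate from Lemma~\ref{dir} across the $\Lambda$-step cycle.'' This does not work as stated: the Lipschitz bound from Lemma~\ref{dir} is \emph{interior} to each replacement ball and degenerates near its boundary. After step $l$, the map $u_n^l$ is harmonic only on the balls of the $l$-th subcover; on balls from earlier subcovers it is merely piecewise harmonic, with the pieces separated by arcs of $\partial B_{2r_n}(x_{k_n,i})$ where no Lipschitz control is available. There is thus no mechanism to ``propagate'' the interior Lipschitz bounds to a global modulus of continuity for $u_n^\Lambda$. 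The paper avoids this entirely: its Claim~\ref{epsilondelta} obtains equicontinuity by iterating the Courant--Lebesgue lemma together with the containment property in Lemma~\ref{dir} (if $^{Dir}h(\partial\Omega)\subset\overline{\B_\sigma(P)}$ then $^{Dir}h(\Omega)\subset\overline{\B_\sigma(P)}$). At each of the $\Lambda$ steps one finds, via Courant--Lebesgue, a circle of small radius whose image is trapped in a small target ball; the maximum principle then traps the interior, and the target-ball radius grows by at most a factor of $3$ per step. This is why the paper's $r_n$ is defined using target balls of radius $3^{-\Lambda}\rho$ rather than $\rho$: without this shrinking factor the replacement at step $l$ need not even be well-defined, since after $l-1$ overlapping replacements the image of $B_{2r_n}(x_{k_n,i})$ may have spread beyond $\B_\rho(P)$.

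A second, smaller point: the paper does not deduce uniform convergence of the full sequence $\{u_n^\Lambda\}$ from equicontinuity alone. Equicontinuity gives convergence along a subsequence; to upgrade to the full sequence the paper combines this with the $L^2$-closeness of consecutive iterates (your energy-convexity step) via the contradiction argument around \eqref{limitforcase1}. Your sketch conflates these, though you have the right ingredients. In the $r=0$ case your outline is essentially correct and matches the paper, including the use of Corollary~\ref{removable-singularity}; the non-constancy of the bubble is secured exactly as you anticipate, by choosing $y_n,y_n'$ with $2r_n\le d_g(y_n,y_n')\le 4r_n$ and $d(u_n^0(y_n),u_n^0(y_n'))\ge 3^{-\Lambda}\rho$.
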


\begin{lemma}[Jost's covering lemma, {\cite{jost} Lemma 9.2.6}] \label{jostcovering}
For a compact Riemannian manifold $\Sigma$, there exists $\Lambda = \Lambda(\Sigma) \in \N$  with the following property:  for any covering 
\[
\Sigma \subset \bigcup_{i=1}^m B_r(x_i)
\]
by open balls, there exists a partition $I^1, \dots I^{\Lambda}$ of the integers $\{1, \dots, m\}$ such that for any $l \in \{1, \dots, \Lambda\}$ and two distinct elements $i_1, i_2$ of $I^l$,
\[
B_{2r}(x_{i_1}) \cap B_{2r}(x_{i_2}) = \emptyset.
\]
\end{lemma}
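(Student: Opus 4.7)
The idea is to recognize the required partition as a proper vertex coloring of the ``conflict graph'' $G$ on vertex set $\{1,\ldots,m\}$ with an edge between $i$ and $j$ exactly when $B_{2r}(x_i) \cap B_{2r}(x_j) \neq \emptyset$, equivalently when $d_\Sigma(x_i,x_j) < 4r$. A partition $I^1,\ldots,I^\Lambda$ satisfying the conclusion is precisely a proper $\Lambda$-coloring of $G$. Since the greedy algorithm properly colors any graph using $\Delta(G) + 1$ colors, it suffices to bound the maximum degree of $G$ by a constant depending only on $\Sigma$.

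For the degree bound I would appeal to volume comparison on the compact Riemannian manifold $\Sigma$. There exist $r_0 > 0$ and $0 < c_1 \leq c_2$ (depending only on the dimension $n$ of $\Sigma$ and a two-sided sectional curvature bound) with
\[
c_1 s^n \leq \mathrm{Vol}(B_s(x)) \leq c_2 s^n, \qquad x \in \Sigma,\ 0 < s \leq r_0.
\]
The case $r > r_0/10$ is handled directly from compactness, so assume $r \leq r_0/10$. A Vitali-type construction produces a maximal subcollection $\{x_{i_\alpha}\}$ of centers with $d(x_{i_\alpha},x_{i_\beta}) \geq r$ for $\alpha \neq \beta$; maximality forces every other $x_j$ to lie within distance $r$ of some selected $x_{i_\alpha}$. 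For any fixed $\alpha$, the balls $\{B_{r/2}(x_{i_\beta}) : d(x_{i_\alpha},x_{i_\beta}) < 4r\}$ are pairwise disjoint by the $r$-separation and all sit inside $B_{5r}(x_{i_\alpha}) \subset B_{r_0}(x_{i_\alpha})$, so volume comparison gives
\[
\#\{\beta : d(x_{i_\alpha}, x_{i_\beta}) < 4r\} \;\leq\; \frac{c_2 (5r)^n}{c_1 (r/2)^n} \;=\; \frac{c_2}{c_1}\cdot 10^n \;=:\; \Lambda_0(\Sigma),
\]
a constant depending only on $\Sigma$. A greedy coloring of the induced subgraph on $\{i_\alpha\}$ then produces a proper partition into at most $\Lambda_0$ classes of $B_{2r}$-disjoint balls.

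To extend the partition to the full index set $\{1,\ldots,m\}$, each remaining $j$ is assigned to the class of the nearest $x_{i_\alpha}$, splitting that class further when the assignment would create a conflict. A parallel volume-comparison argument at radius $r/4$ bounds the number of indices $j$ clustered around any single $x_{i_\alpha}$ by a constant depending only on $\Sigma$, so absorbing these clusters increases the palette from $\Lambda_0$ to some final $\Lambda = \Lambda(\Sigma)$. This extension step is the main obstacle: for a completely arbitrary cover, centers may coincide or lie arbitrarily close together, which a priori forces $\Lambda$ to depend on $m$; in the application of Section~\ref{section:harmonic-replacement} the covers of $\Sigma$ are chosen $r$-separated up to bounded multiplicity, which is exactly the condition that makes $\Lambda = \Lambda(\Sigma)$ uniform.
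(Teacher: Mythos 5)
The paper does not prove this lemma; it simply cites \cite{jost} Lemma 9.2.6, so there is no internal proof to compare against. Your core approach---view the desired partition as a proper coloring of the conflict graph $G$ on $\{1,\dots,m\}$ (edges between $i,j$ with $d(x_i,x_j)<4r$), bound the maximum degree of $G$ by two-sided volume comparison, and color greedily---is the standard argument and is essentially what Jost does.

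Your closing caveat, however, is not a side remark but a genuine defect in the statement as written, and you should stand by it rather than bury it. For an arbitrary cover one can take $m$ pairwise distinct centers all within distance $r/100$ of one another; then every two doubled balls meet, the conflict graph is complete on $m$ vertices, and no $\Lambda$ independent of $m$ can exist. The same pathology is present in the case $r>r_0/10$, which you waved off via ``compactness''; clustered centers are available at every scale. So ``for any covering'' is literally false, and no version of your extension-by-nearest-selected-center step can repair it without a further hypothesis.

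The resolution is the one you conjecture. Jost's actual Lemma 9.2.6 \emph{constructs} a cover with the stated property for each $r$ rather than asserting it for every cover, and the paper quietly imposes the same separation: in Definition~\ref{jc} the cover is chosen so that, by the Vitali Covering Lemma, the balls $B_{2^{-k-3}}(x_{k,i})$ are pairwise disjoint, hence $d(x_{k,i_1},x_{k,i_2})\ge r/4$ for $i_1\ne i_2$ with $r=2^{-k}$. Under that separation hypothesis your volume argument closes cleanly and the extension step in your last paragraph is unnecessary: if $B_{2r}(x_{i})\cap B_{2r}(x_{j})\ne\emptyset$ then $d(x_{i},x_{j})<4r$; the disjoint balls $B_{r/8}(x_{j})$ over all such $j$ sit inside $B_{5r}(x_{i})$; and two-sided volume comparison on the compact manifold $\Sigma$ (together with a crude total-volume count when $r$ is comparable to $\operatorname{diam}\Sigma$) bounds the degree, and hence $\Lambda$, by a constant depending only on $\Sigma$. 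It would be worth stating explicitly that Lemma~\ref{jostcovering} is to be read with this separation hypothesis attached; that is the only way it is ever invoked in Section~\ref{section:harmonic-replacement}.
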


\begin{definition} \label{jc}
\emph{
For each $k=0,1,2, \dots$,  we fix a covering 
\[
{\mathcal O}_k=\{ B_{2^{-k}}(x_{k,i}) \}_{i=1}^{m_k}
\]
 of $\Sigma$ by balls of radius $2^{-k}$.  Furthermore,  let $I^1_k, \dots, I^{\Lambda}_k$ be the disjoint subsets of $\{1, \dots, m_k\}$ as in Lemma~\ref{jostcovering}; in other words, for every $l \in \{1, \dots, \Lambda\}$, 
 \begin{equation} \label{disjoint}
B_{2^{-k+1}}(x_{k,i_1}) \cap B_{2^{-k+1}}(x_{k,i_2}) = \emptyset, \ \ \forall i_1, i_2 \in I^l_k, \ \ i_1 \neq i_2.
\end{equation}
  By the Vitali Covering Lemma, we can assure that }
 \begin{equation} \label{vitali}
B_{2^{-k-3}}(x_{k,i_1}) \cap B_{2^{-k-3}}(x_{k,i_2})=\emptyset, \ \ \forall i_1, i_2 \in \{1,\dots,m_k\}, \ \ i_1\neq i_2.
 \end{equation}
\end{definition}

Let $\Sigma$ be a compact Riemann surface. By uniformization, we can endow  $\Sigma$ with a Riemannian metric of constant Gaussian curvature $+1$, $0$ or $-1$.  Let $\Lambda=\Lambda(\Sigma)$ be as in Lemma~\ref{jostcovering} and  $\rho=\rho(X)>0$ be as in Lemma~\ref{dir}.   We inductively define a sequence of numbers 
\[
\{r_n\} \subset 2^{-\N}:=\{1,2^{-1}, 2^{-2}, \dots \}\]
 and a sequence of  finite energy maps 
 \[
 \{u_n^l:\Sigma \rightarrow X \}
 \]
  for $l=0, \dots, \Lambda$, $n=1, \dots, \infty$ as follows:  \\

{\sc Initial Step 0:}  Fix $\kappa_0 \in \N$  such that  $B_{2^{-\kappa_0}}(x)$ is homeomorphic to a disk  for all $x \in \Sigma$.
Let $u_0^0:=\varphi \in C^0 \cap W^{1,2}(\Sigma, X)$, and let
\[
r_0'=\sup\{r>0:  \forall \, x \in \Sigma, \exists \, P \in X \mbox{ such that } u_0^0(B_{2r}(x)) \subset \B_{3^{-\Lambda} \rho}(P) \}
\]
and $k_0'>0$ be such that
\[
2^{-k_0'} \leq r_0' < 2^{-k_0'+1}.
\]
Define
\[
r_0=2^{-k_0}=\min\{2^{-k_0'}, 2^{-{\kappa_0}}\},
\]
and let
\[
{\mathcal O}_{k_0}=\{ B_{r_0}(x_{{k_0},i})\}_{i=1}^{m_{k_0}}
\mbox{ and }
 I^1_{k_0}, \dots, I^{\Lambda}_{k_0}
 \]
be as in Definition~\ref{jc}.
For $l \in \{1, \ldots, \Lambda\}$ inductively define $u_0^l: \Sigma \rightarrow X$ from $u_0^{l-1}$ by setting
\[
u_0^l= \left\{
\begin{array}{ll}
u_0^{l-1} & \mbox{ in }\Sigma \backslash \bigcup_{i \in I_{k_0}^l} B_{2r_0}(x_{_{k_0},i})\\
^{Dir}u_0^{l-1}& \mbox{ in }B_{2r_0}(x_{_{k_0},i}), \  i \in I^l_{k_0}
\end{array}
\right.
\]
where
$^{Dir}u_0^{l-1}$ is the unique Dirichlet solution in  $W^{1,2}_{u_0^{l-1}}(B_{2r_0}(x_{k_0,i}),\mathcal{B}_{\rho}(P))$ of Lemma~\ref{dir}. Here there are two things to check related to the definition of the Dirichlet solution. First, since $B_{2r_0}(x_{k_0,i_1}) \cap B_{2r_0}(x_{k_0,i_2}) = \emptyset, \ \forall \; i_1, i_2 \in I^l_{k_0}$ with $i_1 \neq i_2$ (cf. (\ref{disjoint})), there is no issue of interaction between solutions at a single step so the map is well-defined if it exists. Second, we claim that $u_0^{l-1}(B_{2r_0}(x_{k_0,i})) \subset \mathcal{B}_{3^{-\Lambda+(l-1)}\rho}(P) \subset \mathcal{B}_\rho(P)$ for some $P \in X$ and thus the Dirichlet solution exists and is unique by Lemma \ref{dir}. To verify the claim, first note that for each $i=1, \ldots , m_{k_0}$ there exists $P \in X$ such that 
$u_0^1(B_{2r_0}(x_{k_0,i})) \subset \mathcal{B}_{3^{-\Lambda+1}\rho}(P)$. Indeed, if 
$B_{2r_0}(x_{k_0,i}) \cap B_{2r_0}(x_{k_0,j}) = \emptyset$ for all $j \in I_{k_0}^1$ then $u_0^1=u_0^0$ on $B_{2r_0}(x_{k_0,i})$ and so 
$u_0^1(B_{2r_0}(x_{k_0,i}))=u_0^0(B_{2r_0}(x_{k_0,i}))\subset \mathcal{B}_{3^{-\Lambda}\rho}(P)$ for some $P$. On the other hand, if $B_{2r_0}(x_{k_0,i}) \cap B_{2r_0}(x_{k_0,j}) \neq \emptyset$ for one or more $j \in I_{k_0}^1$, then since $u_0^0(B_{2r_0}(x_{k_0,i})) \subset \mathcal{B}_{3^{-\Lambda}\rho}(P)$ 
for some $P$ and $u_0^1(B_{2r_0}(x_{k_0,j})) \subset \mathcal{B}_{3^{-\Lambda}\rho}(P_j)$ for some $P_j$ with $\mathcal{B}_{3^{-\Lambda}\rho}(P) \cap \mathcal{B}_{3^{-\Lambda}\rho}(P_j) \neq \emptyset$, it follows that $u_0^1(B_{2r_0}(x_{k_0,i})) \subset \mathcal{B}_{3^{-\Lambda+1}\rho}(P)$. Inductively, 
we may show that for each $i=1, \ldots , m_{k_0}$ and $l \in \{1, \dots, \Lambda\}$ there exists $P \in X$ such that $u_0^{l-1}(B_{2r_0}(x_{k_0,i})) \subset \mathcal{B}_{3^{-\Lambda+(l-1)}\rho}(P)$, as claimed. 
 \\

{\sc Inductive Step }$n$:   Having defined 
\[
 r_0, \dots, r_{n-1} \in 2^{-\N},
 \] 
and
\[
u_\nu^0, \, u_{\nu}^1, \dots, u_{\nu}^{\Lambda}: \Sigma \rightarrow X, \ \  \nu=0,1, \dots, n-1,
\]
we set $u_n^0=u_{n-1}^{\Lambda}$ and  define
\[
r_n \in 2^{-\N}  \mbox{ and } \ u_n^1, \dots, u_n^{\Lambda}
\]
  as follows.  Let
\[
r_n'=\sup\{r>0:  \forall \, x \in \Sigma, \exists \, P \in X \mbox{ such that } u_n^0(B_{2r}(x)) \subset \B_{3^{-\Lambda}\rho}(P) \}
\]
and $k_n' \in \N$ be such that
\[
2^{-k_n'} \leq r_n' < 2^{-k_n'+1}.
\]
Define
\[
r_n  = 2^{-k_n}= \min\{2^{-k_n'}, 2^{-\kappa_0} \}.
\]
Let 
\[
{\mathcal O}_{k_n}=\{ B_{r_n}(x_{k_n,i})\}_{i=1}^{m_{k_n}} \mbox{ and } 
 I^1_{k_n}, \dots, I^{\Lambda}_{k_n}
  \]
be as in Definition~\ref{jc}.
Having defined $u_n^0, \dots, u_n^{l-1}$, we now define $u_n^l:\Sigma \rightarrow X$ by setting
\[
u_n^l= \left\{
\begin{array}{ll}
u_n^{l-1} & \mbox{ in }\Sigma \backslash \bigcup_{i \in I^l_{k_n}} B_{2r_n}(x_{k_n,i})\\
^{Dir}u_n^{l-1} & \mbox{ in }B_{2r_n}(x_{k_n,i}), \  i \in I^l_{k_n}
\end{array}
\right.
\]
where $^{Dir}u_n^{l-1}$ is the unique Dirichlet solution in  $W^{1,2}_{u_n^{l-1}}(B_{2r_n}(x_{k_n,i}),\mathcal{B}_\rho(P))$ for some $P$ of Lemma~\ref{dir}. 
\vskip 5mm

This completes the inductive construction of the sequence $\{u_n^l\}$.  Note that 
\[
E(u_n^{\Lambda}) \leq \dots \leq E(u_n^0) = E(u_{n-1}^{\Lambda}), \ \ \forall \, n=1,2, \dots.
\]
Thus, there exists  $E_0$ such that 
\begin{equation} \label{Enot}
\lim_{n \rightarrow \infty} E(u^l_n) = E_0, \ \ \ \forall \, l=0, \dots, \Lambda.
\end{equation}

\vskip10mm

We consider the following two cases separately: 
\begin{itemize}
\item[ ]{\bf CASE 1}:  $\liminf_{n\rightarrow \infty} r_n>0$.
\item[ ]{\bf CASE 2}:  $\liminf_{n\rightarrow \infty} r_n=0$.\\
\end{itemize}

\noindent For {\bf CASE 1}, we prove that there exists a harmonic map $u:\Sigma \rightarrow X$ homotopic to $\varphi=u_0^0$.   We will need the following two claims.

\begin{claim}  \label{l2close} 
\emph{For any $l \in \{0,\dots \Lambda-1\}$, }
\[
 \lim_{n \rightarrow \infty} ||d(u_{n}^l,u_{n}^{\Lambda})||_{L^2(\Sigma)}=  0.
 \]
\end{claim}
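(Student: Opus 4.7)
The plan is to obtain a per-step $L^2$-closeness estimate from the energy convexity of Dirichlet solutions into small CAT(1) balls, sum telescopically from step $l$ up to $\Lambda$, and conclude using the uniform lower bound on $r_n$ available in CASE 1.

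First, I fix $j \in \{1,\dots,\Lambda\}$ and note that $u_n^j$ coincides with $u_n^{j-1}$ outside the pairwise disjoint family of balls $\{B_{2r_n}(x_{k_n,i})\}_{i \in I^j_{k_n}}$ (disjointness from \eqref{disjoint}), while on each such ball $u_n^j$ is the Dirichlet solution $^{Dir}u_n^{j-1}$ into some target ball $\mathcal{B}_\rho(P)$ supplied by Lemma \ref{dir}. In particular, $d(u_n^{j-1}, u_n^j) \equiv 0$ outside the union, so only the contributions from these balls need to be controlled.

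Second, I would invoke the energy convexity for minimizers into a CAT(1) ball developed in Appendix \ref{energy-convexity} (via the quadrilateral estimates of Appendix \ref{section:quadrilateral-estimates}, in the spirit of \cite{serbinowski}). Applied on each $B = B_{2r_n}(x_{k_n,i})$ to the competitor $u_n^{j-1}$ and the minimizer $u_n^j$, this gives an inequality controlling $\int_B |\nabla d(u_n^{j-1},u_n^j)|^2$ by the energy drop $E^{u_n^{j-1}}[B] - E^{u_n^j}[B]$ (the $\cos R$ weights from Theorem \ref{theorem:subharmonicity} are bounded below on a $\rho$-ball with $\rho < \pi/4$, so they contribute only a universal constant). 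Since $d(u_n^{j-1}, u_n^j)$ vanishes on $\partial B$, a Poincar\'e inequality on $B$ (whose constant depends only on $r_n$ and on the fixed metric of $\Sigma$) upgrades this to
\[
\int_B d^2(u_n^{j-1},u_n^j)\, d\mu_g \;\le\; C(r_n)\bigl(E^{u_n^{j-1}}[B] - E^{u_n^j}[B]\bigr).
\]

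Third, I would sum over $i \in I^j_{k_n}$ using disjointness, and use that $u_n^{j-1} \equiv u_n^j$ on the complement, to obtain
\[
\|d(u_n^{j-1}, u_n^j)\|_{L^2(\Sigma)}^2 \;\le\; C(r_n)\bigl(E(u_n^{j-1}) - E(u_n^j)\bigr).
\]
In CASE 1 there exists $r_* > 0$ with $r_n \ge r_*$ for all $n$ large, so $C(r_n) \le C$ is uniform; and by \eqref{Enot} the energy drop $E(u_n^{j-1}) - E(u_n^j) \to 0$. Hence $\|d(u_n^{j-1}, u_n^j)\|_{L^2(\Sigma)} \to 0$ for every $j$. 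A pointwise triangle inequality followed by Minkowski's inequality on $L^2(\Sigma)$ then yields
\[
\|d(u_n^l, u_n^\Lambda)\|_{L^2(\Sigma)} \;\le\; \sum_{j=l+1}^{\Lambda} \|d(u_n^{j-1}, u_n^j)\|_{L^2(\Sigma)} \;\xrightarrow{n\to\infty}\; 0.
\]

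The main obstacle is ensuring the constant in the energy-convexity-plus-Poincar\'e step is uniform in $n$. The CAT(1) convexity requires the images over each ball to lie in a single small target ball, which is guaranteed by the construction of Step $n$ (the images are contained in $\mathcal{B}_{3^{-\Lambda+(j-1)}\rho}(P) \subset \mathcal{B}_\rho(P)$ for some $P$). Uniformity of the domain Poincar\'e constant is the only place the hypothesis $\liminf r_n > 0$ is used; in CASE 2 this step fails, which is precisely why bubbling must be analyzed separately.
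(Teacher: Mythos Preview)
Your proof is correct and follows essentially the same route as the paper: apply the energy convexity of Theorem~\ref{theorem:energy-convexity} on each ball where replacement occurs, convert the gradient bound on (a quantity comparable to) $d(u_n^{j-1},u_n^j)$ into an $L^2$ bound via Poincar\'e, sum over the disjoint balls to get $\|d(u_n^{j-1},u_n^j)\|_{L^2(\Sigma)}^2 \le C\bigl(E(u_n^{j-1})-E(u_n^j)\bigr)$, and then telescope. The paper makes the interpolated map $w$ from Theorem~\ref{theorem:energy-convexity} explicit and uses the minimizing property $E(u_n^j)\le E(w)$ to pass to the energy-drop bound; you absorb this step into the phrase ``energy drop,'' which is fine. (A small imprecision: the quantity whose gradient is controlled is $\tan(\tfrac12 d)/\cos R$, not $d$ itself, but since this is comparable to $d$ and vanishes on $\partial B$, Poincar\'e applied to it yields the same $L^2$ control on $d$.)

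Your final paragraph, however, misidentifies the role of the hypothesis $\liminf r_n>0$. The Poincar\'e constant on a ball of radius $2r_n$ scales like $r_n^2$, so it is uniformly bounded because $r_n\le 2^{-\kappa_0}$ always --- not because $r_n$ is bounded below. Thus Claim~\ref{l2close} holds regardless of which case one is in (and indeed the paper reuses it, after conformal rescaling, in CASE~2). What actually separates the two cases is equicontinuity: Claim~\ref{epsilondelta} requires choosing $\delta\in(0,r_n)$, so if $r_n\to 0$ no uniform modulus of continuity is available for $\{u_n^\Lambda\}$, and that is what forces the bubbling analysis.
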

In particular, letting $l=0$ in Claim~\ref{l2close}, we obtain
\begin{equation} \label{l2closeagain0CASE1}
\lim_{n \rightarrow \infty} ||d(u_{n-1}^{\Lambda},u_{n}^{\Lambda})||_{L^2(\Sigma)}=  0.
\end{equation}

\begin{proof}
Fix $l \in \{0,\dots, \Lambda-1\}$.   For $n \in \N$,  $\lambda \in \{l+1, \dots, \Lambda\}$ and   $i \in I^{\lambda}_{k_n}$, we apply Theorem~\ref{theorem:energy-convexity} with $u_0=u_{n}^{\lambda-1}\big|_{B_{2r_n}(x_{k_n,i})}$, $u_1=u_{n}^\lambda \big|_{B_{2r_n}(x_{k_n,i})}$ and $\Omega=B_{2r_n}(x_{k_n,i})$.  Let $w:\Sigma \rightarrow X$ be the map defined as $w=u_{n}^\lambda=u_{n}^{\lambda-1}$ outside $\bigcup_{i \in I^\lambda_{k_n}} B_{2r_n}(x_{k_n,i})$ and  the map corresponding to $w$ in  Theorem~\ref{theorem:energy-convexity} in each  $B_{2r_n}(x_{k_n,i})$.  Then
\begin{align*}
   (\cos^8 \rho) & \int_{B_{2r_n}(x_{k_n,i})}  
   \left| \nabla \frac{\tan \frac{1}{2}d(u_{n}^{\lambda-1},u_{n}^\lambda)}{\cos R} \right|^2 d\mu \\
   &\leq \frac{1}{2} \left(\int_{B_{2r_n}(x_{k_n,i})} | \nabla u_n^{\lambda-1}|^2d\mu  
   + \int_{B_{2r_n}(x_{k_n,i})} | \nabla u_n^\lambda|^2  d\mu\right)  - \int_{B_{2r_n}(x_{k_n,i})} | \nabla w|^2 d\mu.
\end{align*}
Summing over $i$, using that $w=u_{n}^\lambda=u_{n}^{\lambda-1}$ outside $\bigcup_{i \in I^\lambda_{k_n}} B_{2r_n}(x_{k_n,i})$, and applying the Poincar\'{e} inequality, we obtain
\[
\int_{\Sigma} d^2(u_{n}^{\lambda-1},u_{n}^\lambda) d\mu
 \leq   C \left(  \frac{1}{2} E(u_{n}^{\lambda-1}) + \frac{1}{2} E(u_{n}^\lambda) -E(w) \right),
\]
where here and henceforth $C$ is a constant independent of $n$.
Since  $u_{n}^\lambda$ is harmonic in $\bigcup_{i \in I^\lambda_{k_n}} B_{2r_n}(x_{k_n,i})$,  we have  $E(u_{n}^\lambda) \leq E(w)$.  Hence
\[
\int_{\Sigma} d^2(u_{n}^{\lambda-1},u_{n}^\lambda) d\mu \leq C \left(  \frac{1}{2} E(u_{n}^{\lambda-1}) - \frac{1}{2} E(u_{n}^\lambda)  \right).
\]
Thus,  
\begin{eqnarray*}
 \int_{\Sigma} d^2(u_{n}^l,u_{n}^{\Lambda}) d\mu 
& \leq & \int_{\Sigma} \left(\sum_{\lambda=l+1}^{\Lambda} d(u_{n}^{\lambda-1},u_{n}^{\lambda}) \right)^2 d\mu \nonumber \\
& \leq &(\Lambda-l)^2\sum_{\lambda=l+1}^{\Lambda} \int_{\Sigma} d^2 (u_{n}^{\lambda-1},u_{n}^{\lambda})d\mu\nonumber \\
& \leq & C \sum_{\lambda=l+1}^{\Lambda} \left( E(u_{n}^{\lambda-1}) - E(u_{n}^\lambda)\right)\nonumber \\
& = & C  \left( E(u_{n}^l) - E(u_{n}^{\Lambda})\right). \label{pn}
\end{eqnarray*}
\\
This proves the claim since $\lim_{n \rightarrow \infty} \left( E(u_{n}^l) - E(u_{n}^{\Lambda}) \right)=0$ by (\ref{Enot}).  
\end{proof}

\begin{claim} \label{epsilondelta}
Let $\epsilon>0$ such that $3^{-\Lambda} \epsilon \leq \rho$, $l \in \{0, 1, \dots, \Lambda\}$ and $n \in \N$ be given. If  $\delta \in (0,r_n)$ 
is such that
\begin{equation} \label{choiceofdelta}
\sqrt{\frac{8\pi E(u_0^0)}{\log \delta^{-2}}} \leq 3^{-\Lambda} \epsilon,
\end{equation}
then
\[
\forall \, x \in \bigcup_{\lambda=1}^{l} \bigcup_{i \in I^{\lambda}_{k_n}}B_{r_n}(x_{k_n,i}), \  \ \exists \, P \in X \mbox{ such that } u_{n}^l(B_{\delta^{\Lambda}}(x)) \subset \B_{3\epsilon}(P).
\]
In particular, for $l =\Lambda$, 
$\forall \, x \in \Sigma, \  \exists \, P \in X \mbox{ such that } u_{n}^\Lambda(B_{\delta^{\Lambda}}(x)) \subset \B_{3\epsilon}(P)$.
\end{claim}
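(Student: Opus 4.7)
The plan is to combine the Courant-Lebesgue lemma with the CAT(1) maximum principle of Lemma~\ref{dir}; no further induction on $l$ or $n$ is needed beyond the structural facts already set up in the construction. The case $l=0$ is vacuous, so assume $l\ge 1$ and fix $x$ in the stated union: $x \in B_{r_n}(x_{k_n,i_0})$ for some $i_0 \in I^{\lambda_0}_{k_n}$ with $\lambda_0 \le l$, and $\delta<r_n$ gives $B_\delta(x) \subset B_{2r_n}(x_{k_n,i_0})$. I would first verify that $u_n^l$ is locally energy minimizing throughout $B_{2r_n}(x_{k_n,i_0})$: at any $p$ in this ball, let $\lambda^\ast \in \{\lambda_0,\ldots,l\}$ be the largest level at which $p$ lies in some replacement ball $B_{2r_n}(x_{k_n,j^\ast})$, $j^\ast \in I^{\lambda^\ast}_{k_n}$; on a small neighborhood of $p$ disjoint from every later-level replacement ball, $u_n^l$ coincides with ${}^{Dir}u_n^{\lambda^\ast-1}$ on $B_{2r_n}(x_{k_n,j^\ast})$, which is energy minimizing there by construction. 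Combined with the cumulative $\rho$-bound $u_n^l(B_{2r_n}(x_{k_n,i_0})) \subset \overline{\mathcal{B}_{3^{-\Lambda+l}\rho}(Q)} \subset \mathcal{B}_\rho(Q)$, obtained by iterating the paper's triangle-inequality expansion at each level, the remark after Theorem~\ref{theorem:minimizer} upgrades $u_n^l|_{B_\delta(x)}$ to a globally energy-minimizing map with its own trace.

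Next, I would apply Courant-Lebesgue on the annulus $B_\delta(x) \setminus B_{\delta^\Lambda}(x)$, which is nonempty since $\Lambda \ge 2$. Using $E(u_n^l) \le E(u_0^0)$ and a mean-value argument on the tangential energy density, I obtain a radius $\tau \in (\delta^\Lambda, \delta)$ with
\[
L\bigl(u_n^l(\partial B_\tau(x))\bigr) \le \sqrt{\frac{2\pi E(u_0^0)}{(\Lambda-1)\log\delta^{-1}}} \le \sqrt{\frac{8\pi E(u_0^0)}{\log \delta^{-2}}} \le 3^{-\Lambda}\epsilon,
\]
where the middle inequality uses $\Lambda \ge 2$ and the last is the hypothesis on $\delta$. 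Hence $u_n^l(\partial B_\tau(x))$ is a closed curve of length at most $3^{-\Lambda}\epsilon$, contained in some $\overline{\mathcal{B}_{3^{-\Lambda}\epsilon}(P)}$. Since $3^{-\Lambda}\epsilon \le \rho$ and $u_n^l|_{B_\tau(x)}$ inherits Dirichlet minimality as the restriction of a Dirichlet solution to a sub-ball, Lemma~\ref{dir} gives $u_n^l(B_\tau(x)) \subset \overline{\mathcal{B}_{3^{-\Lambda}\epsilon}(P)} \subset \mathcal{B}_{3\epsilon}(P)$, and $B_{\delta^\Lambda}(x) \subset B_\tau(x)$ completes the proof. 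The ``in particular'' assertion at $l=\Lambda$ is immediate because the union $\bigcup_{\lambda=1}^\Lambda \bigcup_i B_{r_n}(x_{k_n,i})$ then covers $\Sigma$ by construction.

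The hard part I anticipate is the first step: justifying local energy minimality of $u_n^l$ on all of $B_{2r_n}(x_{k_n,i_0})$ despite the map having been modified by possibly many overlapping Dirichlet replacements at different levels. Once one sees that every point locally coincides with the Dirichlet solution of the latest level that touched it, the $\rho$-bound together with the remark after Theorem~\ref{theorem:minimizer} promotes local to global energy minimality, and the remainder is the standard Courant-Lebesgue plus maximum-principle combination; the factor-of-three slack between the Courant-Lebesgue output $3^{-\Lambda}\epsilon$ and the claimed bound $3\epsilon$ is comfortable.
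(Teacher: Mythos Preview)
Your argument has a genuine gap in the first step: the claim that $u_n^l$ is locally energy minimizing throughout $B_{2r_n}(x_{k_n,i_0})$ is false in general. Your reasoning assigns to each point $p$ the largest level $\lambda^\ast$ at which $p$ lies in an open replacement ball, and then asserts the existence of a neighborhood of $p$ disjoint from every later-level replacement ball. But if $p$ happens to lie on $\partial B_{2r_n}(x_{k_n,j'})$ for some $j'\in I^{\lambda'}_{k_n}$ with $\lambda'>\lambda^\ast$, every neighborhood of $p$ meets that later ball, and on that neighborhood $u_n^l$ is a genuine gluing: inside $B_{2r_n}(x_{k_n,j'})$ it is ${}^{Dir}u_n^{\lambda'-1}$, while just outside it equals $u_n^{\lambda'-1}$. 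These two maps agree on the boundary circle but there is no reason the glued map minimizes energy across it. The paper's own proof says this explicitly in Case~b: ``$u_{n}^{\lambda+1}$ is only piecewise harmonic on $B_{R_2(x)}(x)$.'' Consequently the remark after Theorem~\ref{theorem:minimizer} does not apply, and you cannot invoke the maximum principle of Lemma~\ref{dir} for $u_n^l$ on $B_\tau(x)$; that lemma applies to Dirichlet solutions, which $u_n^l\big|_{B_\tau(x)}$ is not.

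This is precisely why the paper does not attempt a single Courant--Lebesgue step on $u_n^l$. Instead it works level by level: at level $\lambda$ one is inside a genuine replacement ball, so $u_n^\lambda$ \emph{is} a Dirichlet solution there and Lemma~\ref{dir} traps $u_n^\lambda(B_{\delta^2}(x))$ in a ball of radius $3^{-\Lambda}\epsilon$. Passing to $u_n^{\lambda+1}$, one applies Courant--Lebesgue again on the smaller annulus $(\delta^3,\delta^2)$ and then handles the piecewise-harmonic structure by splitting $B_{R_2(x)}(x)$ into the region untouched at level $\lambda+1$ (where $u_n^{\lambda+1}=u_n^\lambda$ is already controlled) and the pieces inside level-$(\lambda+1)$ balls (whose boundaries are controlled by the previous step together with the new Courant--Lebesgue circle). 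The triangle inequality then costs a factor of $3$ in the radius at each level; iterating $l-\lambda$ times produces $u_n^l(B_{\delta^\Lambda}(x))\subset\mathcal B_{3^{-\Lambda+l-\lambda}\epsilon}(P)\subset\mathcal B_{3\epsilon}(P)$. The powers of $\delta$ and of $3$ in the statement are not slack but exactly what this iteration consumes.
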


\begin{proof}
Fix $\epsilon$, $l$, $n$ and let $\delta$ be as in (\ref{choiceofdelta}).  For  $x \in \bigcup_{\lambda=1}^{l} \bigcup_{i \in I^{\lambda}_{k_n}}B_{r_n}(x_{k_n,i})$, there exists  $\lambda \in \{1, \dots, l\}$ such that $x \in B_{r_n}(x_{k_n,i})$ for some $i \in I^{\lambda}_{k_n}$ and hence 
\[
B_{r_n}(x) \subset B_{2r_n}(x_{k_n,i}).
\]
Since $u_{n}^{\lambda}$ is harmonic in $B_{2r_n}(x_{k_n,i})$,  it is harmonic in $B_{r_n}(x)$.    
By the Courant-Lebesgue Lemma, there exists 
\[
R_1(x) \in (\delta^2,\delta)
\]
such that
\[
u_{n}^{\lambda}(\partial B_{R_1(x)}(x)) \subset \B_{3^{-\Lambda} \epsilon}(P_1) \mbox{ for some }P_1 \in X.
\]
Since $u_{n}^{\lambda}$ is a Dirichlet solution and $R_1(x) < \rho$, by Lemma \ref{dir}
\[
u_{n}^{\lambda}(B_{\delta^2}(x))  \subset u_{n}^{\lambda}(B_{R_1(x)}(x)) \subset \B_{3^{-\Lambda} \epsilon}(P_1).
\]

Next, by the Courant-Lebesgue Lemma, there exists 
\[
R_2(x) \in (\delta^3, \delta^2)
\]
such that
\begin{equation} \label{CourantLebesgueR2}
u_{n}^{\lambda+1}(\partial B_{R_2(x)}(x)) \subset \B_{3^{-\Lambda} \epsilon}(P_2') \mbox{ for some }P_2' \in X.
\end{equation}

There are two cases to consider:\\ 
\\
\emph{Case a}. $B_{R_2(x)}(x) \cap  \overline{\bigcup_{i \in I^{\lambda+1}_{k_n}}  B_{2r_n}(x_{k_n,i}) }= \emptyset$.   In this case, $u_{n}^{\lambda+1} =u_{n}^{\lambda}$ in $B_{R_2(x)}(x)$. Since $u_{n}^{\lambda}$ is harmonic on this ball,
\[
 u_{n}^{\lambda+1}(B_{R_2(x)}(x)) = u_{n}^{\lambda}(B_{R_2(x)}(x)) \subset u_{n}^{\lambda}(B_{\delta^2}(x)) \subset \B_{3^{-\Lambda}\epsilon}(P_1).
\]
In this case we let $P_2=P_1$. \\
\\
\emph{Case b}. $B_{R_2(x)}(x) \cap  {\bigcup_{i \in I^{\lambda+1}_{k_n}}  B_{2r_n}(x_{k_n,i}) }\neq \emptyset$.  In this case, $u_{n}^{\lambda+1}$ is only piecewise harmonic on $B_{R_2(x)}(x)$.  The regions of harmonicity are of two types. On the region $\Omega:= B_{R_2(x)}(x)\backslash \overline{\bigcup_{i \in I^{\lambda+1}_{k_n}}  B_{2r_n}(x_{k_n,i}) }$, we have $u_{n}^{\lambda+1} =u_{n}^{\lambda}$.  As in \emph{Case a},   we conclude that the image of this region is contained in $B_{3^{-\Lambda}\epsilon}(P_1)$. All other regions, which we index $\Omega_i$, have two smooth boundary components, one on the interior of $B_{R_2(x)}(x)$, which we label $\gamma_i$, and one on $\partial B_{R_2(x)}(x)$, which we label $\beta_i$. By construction $u_{n}^{\lambda+1} =u_{n}^{\lambda}$ on $\gamma_i$, thus
\[
u_{n}^{\lambda+1}(\gamma_i) \subset \B_{3^{-\Lambda}\epsilon}(P_1).
\]
Moreover,  $u_{n}^{\lambda+1}(\beta_i) \subset \B_{3^{-\Lambda}\varepsilon}(P_2')$ by (\ref{CourantLebesgueR2}). 
Notice that in this case,
\[
\B_{3^{-\Lambda}\epsilon}(P_1) \cap \B_{3^{-\Lambda}\epsilon}(P_2') \neq \emptyset.
\]  
Thus,  by the triangle inequality there exists $P_2 \in X$ such that
\[
u_{n}^{\lambda+1}(\cup_{i \in I^{\lambda+1}_{k_n}}\partial\Omega_i)  \subset \B_{3^{-\Lambda+1}\epsilon}(P_2).
\]
Since $u_{n}^{\lambda+1}$ is harmonic on each $\Omega_i$, 
\[
u_{n}^{\lambda+1}(\cup_{i \in I^{\lambda+1}_{k_n}}\Omega_i) \subset \B_{3^{-\Lambda+1}\epsilon}(P_2).
\]
Since $\overline{B_{R_2(x)}(x)}=\overline \Omega \cup \bigcup_{i \in I^{\lambda+1}_{k_n}}\overline \Omega_i$,  
\[
u_{n}^{\lambda+1}(B_{R_2(x)}(x)) \subset \B_{3^{-\Lambda+1}\epsilon}(P_2).
\]
Thus, we have shown that  in either \emph{Case a} or \emph{Case b},
\[
 u_{n}^{\lambda+1}(B_{\delta^3}(x))\subset  u_{n}^{\lambda+1}(B_{R_2(x)}(x)) \subset \B_{3^{-\Lambda+1}\epsilon}(P_2).
\]
After iterating this argument for $u_n^{\lambda+2}, \dots, u_n^l$, we conclude that  there exists $P_{l-\lambda+1} \in X
$ such that
\[
u_{n}^l(B_{\delta^{\Lambda}}(x)) \subset u_{n}^l(B_{\delta^{l-\lambda+2}}(x)) \subset \B_{3^{-\Lambda+l-\lambda}\epsilon}(P_{l-\lambda+1}) \subset \B_{3\epsilon}(P_{l-\lambda+1}).
\]
Letting $P=P_{l-\lambda+1}$, we obtain the assertion of Claim~\ref{epsilondelta}.
\end{proof}

Next, note that since $\liminf_{n\rightarrow \infty} r_n>0$, applying Claim~\ref{epsilondelta}  with $\delta \in (0,\liminf r_n)$ and $l=\Lambda$ implies that 
\begin{equation} \label{equicontLambda}
\{u_{n}^{\Lambda}\} \mbox{ is an equicontinuous family of maps in } \Sigma.
\end{equation}
Combining  (\ref{l2closeagain0CASE1}) and (\ref{equicontLambda}),  we conclude that 
\begin{equation} \label{limitforcase1}
\exists \, u \in C^0(\Sigma, X) \mbox{ such that } u_n^{\Lambda} \rightrightarrows u.
\end{equation}
Indeed, from (\ref{equicontLambda}), there exists a subsequence 
$\{u_{n_{j}}^{\Lambda}\}$ of $\{u_{n}^{\Lambda}\}$  and a map $u$ such that
\[
u_{n_{j}}^{\Lambda} \rightrightarrows u.
\]
If the whole sequence $\{u_n^{\Lambda}\}$ does not converge to $u$, then we could have chosen the subsequence 
$\{u^\Lambda_{n_{j}}\}$ in such a way  that $\{u^\Lambda_{n_j-1}\}$ does not converge uniformly to $u$.  By taking a further subsequence of $\{u^\Lambda_{n_j-1}\}$ if necessary, we can then assume that there exists a continuous map $v \neq u$ such that
\[
u^\Lambda_{n_j-1} \rightrightarrows v.
\]
  On the other hand,  (\ref{l2closeagain0CASE1}) asserts that
\[
||d(u^\Lambda_{n_j-1}, u^\Lambda_{n_j})||_{L^2(\Sigma)} \rightarrow 0.
\]
Since
\[
||d(u,v)||_{L^2(\Sigma)} \leq  ||d(u,u^\Lambda_{n_{j}})||_{L^2(\Sigma)} + ||d(u^\Lambda_{n_{j}},u^\Lambda_{n_j-1})||_{L^2(\Sigma)} + ||d(u^\Lambda_{n_j-1},v)||_{L^2(\Sigma)}
\]
and the right hand side goes to 0 as $j \rightarrow \infty$, we conclude $u=v$.  This contradiction proves (\ref{limitforcase1}).

We will now prove that the limit map $u$ of (\ref{limitforcase1}) is a harmonic map.  
Since $\liminf_{n \rightarrow \infty} r_n>0$, there exist $k \in \N$ and an increasing sequence  $\{n_j\}_{j=1}^{\infty} \subset \N$ such that $r_{n_j}=2^{-k}$ (or equivalently $k_{n_j}=k$).  In particular,   the covering used for {\sc Step }$n_j$ in the inductive construction of $u_{n_j}^0, \dots, u_{n_j}^{\Lambda}$  is the same for all $j=1,2,\dots$.
Thus, we can use the following  notation for simplicity: 
\[
{\mathcal O}={\mathcal O}_{k_j}, I^l=I^l_{k_j}, \ B_i=B_{r_{n_j}}(x_{k_{n_j},i}) \mbox{ and }  \ 
tB_i=B_{tr_{n_j}}(x_{k_{n_j},i}) \mbox{ for } t \in \mathbb{R}^+.
\]

With this notation, Claim~\ref{epsilondelta} implies 
that for a fixed $l \in \{1, \dots, \Lambda\}$,
\begin{equation} \label{equicontCASE1}
\{u_{n_j}^l\} \mbox{ is an equicontinuous family of maps on } B^l:=\bigcup_{\lambda=1}^{l} \bigcup_{i \in I^{\lambda}}B_i.
\end{equation}
We claim that for every $l \in \{1, \dots, \Lambda\}$, 
\begin{equation} \label{unc0}
u_{n_j}^l \rightrightarrows u \mbox{ on }  B^l \mbox{ where $u$ is as in (\ref{limitforcase1})}.
\end{equation}
Indeed, if (\ref{unc0}) is not true, consider a subsequence of $\{u_{n_j}^l\}$  that does not converge to $u$.  By (\ref{equicontCASE1}), we can assume (by taking a further  subsequence if necessary) that 
\[
\exists \, v :   B^l  \rightarrow X \mbox{ such that } u_{n_j}^l \rightrightarrows v \neq u|_{B^l}.
\]
Combining this with  (\ref{limitforcase1}) and Claim~\ref{l2close}, we conclude that
\[
||d(v,u)||_{L^2(B^l)}=\lim_{j \rightarrow \infty} ||d(u_{n_j}^l,u_{n_j}^{\Lambda})||_{L^2(B^l)} \leq \lim_{j \rightarrow \infty} ||d(u_{n_j}^l,u_{n_j}^{\Lambda})||_{L^2(\Sigma)} =0
\]
which in turn implies  that $u=v$.  This   contradiction proves (\ref{unc0}).

Finally, we are ready to prove the harmonicity of $u$.  For an arbitrary point $x \in \Sigma$, there exists $l \in \{1, \dots, \Lambda\}$ and $i \in I^l$ such that $x \in B_i$.  Since $u^l_{n_j}$ is energy minimizing in $B_i$ and  $u^{l}_{n_j} \rightrightarrows u$ in $B_i$ by (\ref{unc0}),  Lemma~\ref{luckhaus} implies that
$u$ is energy minimizing in  $\frac{1}{2}B_i$.

The map $u$  is homotopic to $\varphi$ since it is a uniform limit of $u_{n_j}^{\Lambda}$ each of which is homotopic to  $\varphi$.   This completes the proof for {\bf CASE 1} as $u$ is the desired harmonic map homotopic to $\varphi$.

\vskip 8mm

\noindent For {\bf CASE 2}, we prove that there exists a non-constant harmonic map $u:\Sp^2 \rightarrow X$. \\

Recall that we have endowed  $\Sigma$ with a metric $g$ of constant Gaussian curvature that is identically $+1$, $0$ or $-1$. 
Fix 
\begin{equation*} 
y_* \in \Sigma
\end{equation*} and a local conformal chart 
\begin{equation*}
\pi:  U \subset \C \rightarrow \pi(U)=B_1(y_*) \subset \Sigma
\end{equation*}
such that 
\begin{equation*}
\pi(0)=y_*
\end{equation*}
and the metric  $g=(g_{ij})$   of $\Sigma$ expressed with respect to this local coordinates satisfies
\begin{equation} \label{deltaij}
 g_{ij}(0)=\delta_{ij}.
\end{equation}
For each $n$,   the definition of $r_n$ implies that we can find $y_n, y_n' \in \Sigma$ with
\[
2r_n \leq d_g(y_n,y_n') \leq 4r_n
\]
where $d_g$ is the  distance function on $\Sigma$ induced by the metric $g$, and
\[
d(u_n^0(y_n),u_n^0(y_n')) \geq 3^{-\Lambda}\rho.
\]
Since $\Sigma$ is a compact Riemannian surface of constant Gaussian curvature, there exists an isometry  $\iota_n: \Sigma \rightarrow \Sigma$ such that $\iota_n(y_*) =y_n$.
Define the conformal coordinate chart
\begin{equation*} 
\pi_n:  U \subset \C \rightarrow \pi_n(U)=B_1(y_n) \subset \Sigma, \ \ \ \ \ \ \pi_n(z):=\iota_n \circ \pi(z).
\end{equation*}
Thus,
\[
\pi_n(0)=y_n.
\] 
Define the dilatation map
\begin{equation*}
\Psi_n:\C \rightarrow \C, \ \ \Psi_n(z)=r_n z
\end{equation*}
and set $\Omega_n:=\Psi_n^{-1}\circ \pi_n^{-1}(B_1(y_n))\subset \C$ and
\begin{equation*} 
\tilde{u}_n^l:  \Omega_n \rightarrow X, \ \ \ \tilde{u}_n^l:=u_n^l \circ \pi_n \circ \Psi_n.
\end{equation*}
Since $\liminf_{n \rightarrow \infty} r_n =0$, there exists a subsequence 
\begin{equation} \label{subsequence}
\{r_{n_j}\} \ \mbox{ 
 such that } \lim_{j \rightarrow \infty} r_{n_j}=0.
 \end{equation}
   Thus, $\Omega_{n_j} \nearrow \C$.   Furthermore,  (\ref{deltaij}) implies that
\[
\lim_{j \rightarrow \infty} \frac{d_g(y_{n_j}',y_{n_j})}{|\pi_{n_j}^{-1}(y_{n_j}')|}=1.
\]
Hence, for $z_n = \Psi_n^{-1} \circ \pi_n^{-1}(y'_n)$, 
\begin{equation} \label{nonconst1}
2\leq \lim_{j \rightarrow \infty} |z_{n_j}| \leq 4
\end{equation}
and
\begin{equation} \label{nonconst2}
d(\tilde{u}_{n_j}^0(z_{n_j}),\tilde{u}_{n_j}^0(0))= d(u_{n_j}^0(y_{n_j}'),u_{n_j}^0(y_{n_j})) \geq 3^{-\Lambda} \rho.
\end{equation}
Additionally, by the conformal invariance of energy, we have that 
\begin{equation} \label{energybound}
E(\tilde{u}_n^l) =E(u_n^l\big|_{B_1(y_n)}) \leq E(u_0^0).
\end{equation}
%

For $R>0$, let 
\[
D_R:=\{z \in \C:  |z|<R\}.
\]
Since harmonicity is invariant under conformal transformations of the domain, we can follow {\bf CASE 1} (cf.~(\ref{l2closeagain0CASE1}),  (\ref{equicontLambda}) and (\ref{limitforcase1})) and  prove that
\[ 
||d(\tilde{u}_{n-1}^{\Lambda},\tilde{u}_n^{\Lambda})||_{L^2(D_R)} \rightarrow 0,
\] 
\[ 
\{\tilde{u}^\Lambda_{n}\}_{n=n_R}^{\infty}
\mbox{ is an equicontinuous family in } D_R
\] 
for some $n_R$, and
\begin{equation} \label{limitmapCASE2}
\exists \, \tilde{u}_R:D_R \rightarrow X \mbox{ \ such that \ }
\tilde{u}_n^{\Lambda} \rightrightarrows \tilde{u}_R \mbox{ in } D_R.
\end{equation}
Below, we will prove harmonicity of $\tilde{u}_R$ by following a similar proof to {\bf CASE 1}.    We first need the following lemma.

\begin{lemma} \label{scalingballs}
Let ${\mathcal O}_{k_n}$ be as in Definition~\ref{jc}. For a fixed $R>0$, there exists $M$ independent of $n$ such that for every $n \in \N$, 
\[
\left|\{i :  B_{2^{-k_n}}(x_{k_n,i}) \cap \left( \pi_n \circ \Psi_{n}(D_R)\right) \neq \emptyset\}\right|\leq M.
\]
\end{lemma}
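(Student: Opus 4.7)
The plan is to reduce the count to a volume comparison in $\Sigma$ using the Vitali property \eqref{vitali}. The key observation is that $\pi_n \circ \Psi_n(D_R)$ sits inside a geodesic ball of radius proportional to $r_n$ around $y_n$, with the proportionality constant depending only on $R$ (not on $n$), and the covering balls themselves have radius exactly $r_n$, so the ratio of radii is bounded uniformly in $n$.

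First, since $\pi_n = \iota_n \circ \pi$ and $\iota_n$ is an isometry of $\Sigma$, the chart $\pi_n$ has the same metric distortion as the fixed chart $\pi$. Because $g$ is smooth (indeed of constant Gaussian curvature) and $g_{ij}(0)=\delta_{ij}$, there exist $\delta_0 > 0$ and $L \geq 1$, depending only on $\pi$ and $g$, such that $\pi$ is $L$-bi-Lipschitz on $D_{\delta_0}$ (using the Euclidean metric on $D_{\delta_0}$ and the geodesic distance $d_g$ on $\pi(D_{\delta_0}) \subset \Sigma$). Hence for every $n$ satisfying $r_n R \leq \delta_0$, the dilated chart satisfies $\pi_n \circ \Psi_n(D_R) \subset B_{LR r_n}(y_n) \subset \Sigma$.

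Next, if $B_{r_n}(x_{k_n,i})$ meets $\pi_n \circ \Psi_n(D_R)$, then by the triangle inequality $d_g(x_{k_n,i}, y_n) \leq r_n(1 + LR)$. By \eqref{vitali}, the balls $\{B_{r_n/8}(x_{k_n,i})\}_i$ are pairwise disjoint, and those indexed by the $i$ of interest sit inside $B_{r_n(9/8 + LR)}(y_n)$. Since all radii involved are bounded by a fixed multiple of $2^{-\kappa_0}$, constant-curvature volume comparison yields uniform constants $a, A > 0$ such that $a r^2 \leq \mathrm{Area}(B_r(p)) \leq A r^2$ for every geodesic ball $B_r(p) \subset \Sigma$ with $r \leq 2^{-\kappa_0}(9/8 + LR)$. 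Dividing the area of the containing ball by the lower bound on each disjoint small ball gives at most $64\,A\,(9/8+LR)^2/a$ admissible indices, independent of $n$.

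Finally, for the finitely many $n$ with $r_n R > \delta_0$, the set $\pi_n \circ \Psi_n(D_R \cap \Omega_n)$ is still contained in $B_1(y_n)$, and each such $n$ contributes only finitely many intersecting covering balls (again by \eqref{vitali} and compactness of $\overline{B_1(y_n)}$); enlarging $M$ to accommodate these exceptional cases produces the desired uniform bound. The only mild obstacle is verifying the uniform bi-Lipschitz constant for $\pi_n$, which is immediate from the fact that $\iota_n$ is an isometry, so that $\pi_n^* g = \pi^* g$ on $U$.
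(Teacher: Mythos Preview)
Your argument is essentially the same as the paper's: both use the Vitali disjointness \eqref{vitali} of the balls $B_{r_n/8}(x_{k_n,i})$ together with an area comparison at scale $r_n$. The paper packs the relevant small balls into $\pi_n\circ\Psi_n(D_{2R})$ and compares volumes via the limits coming from \eqref{deltaij}, while you pack them into a geodesic ball $B_{r_n(9/8+LR)}(y_n)$ using a uniform bi-Lipschitz constant for $\pi$; these are interchangeable realizations of the same idea.

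One concrete slip: in your last paragraph you write ``for the finitely many $n$ with $r_nR>\delta_0$''. There is no reason the set of such $n$ should be finite (nothing forces $r_n\to 0$ along the full sequence). What \emph{is} finite is the set of possible values of $k_n$ for such $n$, since $r_n>\delta_0/R$ forces $k_n$ into a bounded range; for each of those finitely many $k$ the cover $\mathcal O_k$ has a fixed cardinality $m_k$, which already bounds the count uniformly in $n$. Replacing ``finitely many $n$'' by ``finitely many values of $k_n$'' fixes this, and your volume argument in the main case is correct as written.
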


\begin{proof} 
 By (\ref{deltaij}), 
\[
\lim_{n \rightarrow \infty} \frac{\mathrm{Vol}(\pi_n \circ \Psi_n(D_{2R}))}{4\pi R^22^{-2k_n}} = 1
\]
and 
\[
\lim_{n \rightarrow \infty} \frac{\mathrm{Vol}(B_{2^{-k_n-3}}(x_{n,i}))}{\pi 2^{-2k_n-6}}=1
\]
where $\mathrm{Vol}$ is the volume in $\Sigma$.
Let $J \subset \{1, \dots, m_{k_n}\}$ be such that
\[
J =\{i :  B_{2^{-k_n}}(x_{k_n,i}) \cap \left(\pi_n \circ \Psi_{n}(D_{R})\right) \neq \emptyset\}.
\]
By
 (\ref{vitali}), we have that for sufficiently large $k_n$, 
 \begin{eqnarray*}
|J| \pi 2^{-2k_n-6} & \leq & 2 \sum_{i \in J} \mathrm{Vol} (B_{2^{-k_n-3}}(x_{k_n,i})) \\
& \leq & 2\mathrm{Vol}(\pi_n \circ \Psi_n(D_{2R})) \\
& \leq & 16\pi R^22^{-2k_n}.
 \end{eqnarray*}
Hence $|J| \leq R^2 2^{10}$ and $\{B_{2^{-k_n}}(x_{k_n,i})\}_{i \in J}$ covers $D_R$.
   \end{proof}

For each $B_{2^{-k_{n}}}(x_{k_{n},i}) \in {\mathcal O}_{k_n}$, let
\[
\tilde{B}_{n,i}:=  \Psi_{n}^{-1} \circ \pi_n^{-1}(B_{2^{-k_{n}}}(x_{k_{n},i}))
 \]
 and
 \[
2\tilde{B}_{n,i}:=\Psi_{n}^{-1} \circ \pi_n^{-1}(B_{2^{-k_{n}+1}}(x_{k_{n},i}))
 \]
for notational simplicity.  
After renumbering, Lemma~\ref{scalingballs} implies that there exists $M=M(R)$ such that
 \[
 D_R \subset \bigcup_{i=1}^M \tilde{B}_{n,i}.
 \]
If we write
 \[
I^l_{k_n}(R)=\{ i \in I^l_{k_n}: i \leq M\} \ \ \ \forall \, l =1, \dots, \Lambda,
\] 
then
 \[
 D_R \subset \bigcup_{l=1}^{\Lambda} \bigcup_{i \in I^l_{k_n}(R)} \tilde{B}_{n,i}.
 \]
Choose a subsequence of  (\ref{subsequence}), which we will denote again by $\{n_j\}$, such that 
\[
\Psi_{n_j}^{-1} \circ \pi_{n_j}^{-1}(x_{k_{n_j},i}) \rightarrow \tilde{x}_i \ \ \ \forall \, i \in \{1, \dots, M\}
\]
and such that for each $l =1, \dots, \Lambda$, the sets 
\[
\tilde{I}^l:=I^l_{k_{n_j}}(R)=\{ i \in I^l_{k_{n_j}}: i \leq M\}
\] 
are equal for all $k_{n_j}$.   Unlike {\bf CASE 1},  where $B_{r_{n_j}}(x_{k_{n_j},i})$ is the same ball $B_i$ for all $j$,  the sets $\tilde{B}_{n_1,i}, \tilde{B}_{n_2,i}, \dots$ are not necessarily the same.
Since the component functions of the pullback metric
$
\Psi_{n_j}^*g
$
converge uniformly to those of the standard Euclidean metric $g_0$ on $\C$ by (\ref{deltaij}) and  
$\tilde{B}_{n_j,i}$ with respect to  $\Psi_{n_j}^*g$ is a ball of radius 1,  $\tilde{B}_{n_j,i}$ with respect to $g_0$ is close to being a ball of radius 1 in the following sense:  for all $\epsilon>0$, there exists $J$ large enough such that for all $j \geq J$, $B_{1-\epsilon}(\tilde{x}_i) \subset \tilde{B}_{n_j,i}$ for $i=1, \ldots, M$. Choose $\epsilon>0$ sufficiently small such that $D_R \subset \bigcup_{i=1}^M B_{1-\epsilon}(\tilde{x}_i)$.  Then choose $J$ as above. Set
\[
      \tilde{B}_i:= \bigcap_{j \geq J} \tilde{B}_{n_j,i} \supset B_{1-\epsilon}(\tilde{x}_i)
      \ \
      \mbox{ and }
      \ \
      t\tilde{B}_i:= \bigcap_{j \geq J} t\tilde{B}_{n_j,i} \mbox{ for } t \in \mathbb{R}^+.
\]
Then
\begin{equation} \label{firstone}
 D_R \subset \bigcup_{i=1}^M \tilde{B}_i =\bigcup_{l=1}^{\Lambda} \bigcup_{i \in \tilde{I}^l(R)} \tilde{B}_i.
\end{equation}
Using (\ref{firstone}), we can now follow {\bf CASE 1} (cf.~(\ref{unc0})) to prove that for $l \in \{1, \dots, \Lambda\}$, 
\begin{equation} \label{unc}
\tilde{u}_n^{l} \rightrightarrows \tilde{u}_R \mbox{ on } \bigcup_{\lambda=1}^{l} \bigcup_{i \in \tilde{I}^{\lambda}}\tilde{B}_i \mbox{ where $\tilde{u}_R$ is as in } (\ref{limitmapCASE2}).
\end{equation}
Let $x \in D_R$.   There exists $l \in \{1, \dots, \Lambda\}$ and $i \in \tilde{I}^l$ such that $x \in \tilde{B}_i$ by (\ref{firstone}).  Since harmonicity is invariant under conformal transformations of the domain, $\tilde{u}^l_{n_j}$ is a energy minimizing on $2\tilde{B}_{n_j,i}$.   Since  $\tilde{B}_i \subset \tilde{B}_{n_j,i} \subset 2\tilde{B}_{n_j,i}$ and  $\tilde{u}^{l}_{n_j} \rightrightarrows \tilde{u}_R$ on $\tilde{B}_i$ by (\ref{unc}),  Lemma~\ref{luckhaus} implies that $\tilde{u}_R$ is energy minimizing on $\frac{1}{2}\tilde{B}_i$. Since $x$ is an arbitrary point in $D_R$, we have shown that $\tilde{u}_R$ is harmonic on $D_R$.  

Finally, by the conformal invariance of energy,  $E(\tilde{u}_n^l) = E(u_n^l\big|_{B_1(y_n)}) \leq E(u_0^0)$.  By the lower semicontinuity of energy and (\ref{energybound}), we have \begin{equation} \label{energyofu_R}
E(\tilde u_R) \leq E(u_0^0).
\end{equation}

By considering a  compact exhaustion $\{D_{2^m}\}_{m=1}^{\infty}$ of $\C$ and a diagonalization procedure, we  prove the existence of a harmonic map $\tilde{u}:\C \rightarrow X$.  By (\ref{energyofu_R}), 
\[
E(\tilde{u}) \leq E(u_0^0).
\] 
It follows from (\ref{nonconst1}) and (\ref{nonconst2}) that $\tilde{u}$ is nonconstant.
Thus, {\bf CASE 2} is complete by applying the removable singularity result Corollary \ref{removable-singularity}.

\appendix

\section{Quadrilateral Estimates} \label{section:quadrilateral-estimates}

In this section, we include several estimates for quadrilaterals in a CAT(1) space. The estimates are stated in the unpublished thesis \cite{serbinowski} without proof. As the calculations were not obvious, we include our proofs for the convenience of the reader.  References to the location of each estimate in \cite{serbinowski} are also included.

The first lemma is a result of Reshetnyak which will be essential in later estimates. 
\begin{lemma}[{\cite[Lemma 2]{Reshet}}]
Let $\Box PQRS$ be a quadrilateral in $X$.  Then the sum of the length of diagonals in $\Box PQRS$ can be estimated as follows:
\begin{align} \label{equation:diagonal_sphere}
\begin{split}
	\cos d_{PR} + \cos d_{QS} &\ge - \frac{1}{2} (d_{PQ}^2 + d_{RS}^2) + \frac{1}{4} (1+\cos d_{PS}) (d_{QR} - d_{PS})^2\\
	&\quad  +\cos d_{QR} + \cos d_{PS}  +\mathrm{Cub}\left( d_{PQ}, d_{RS}, d_{QR}-d_{SP}\right).
\end{split}
\end{align}
\end{lemma}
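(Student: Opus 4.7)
The strategy is to reduce to a model quadrilateral on the round sphere $\mathbb{S}^2$ via Reshetnyak's subembedding (majorization) theorem, and then to verify the inequality there by spherical trigonometry and careful Taylor expansion.

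\emph{Step 1 (Reduction to the sphere).} By Reshetnyak's majorization theorem for CAT(1) spaces, there exist four points $\tilde P, \tilde Q, \tilde R, \tilde S \in \mathbb{S}^2$ such that each of the four side lengths agrees with the corresponding distance in $X$, while the two diagonals satisfy $d_{\mathbb{S}^2}(\tilde P,\tilde R) \geq d_{PR}$ and $d_{\mathbb{S}^2}(\tilde Q,\tilde S) \geq d_{QS}$. Since cosine is monotone decreasing on $[0,\pi]$, one has $\cos d_{\mathbb{S}^2}(\tilde P,\tilde R) \leq \cos d_{PR}$ and similarly for the other diagonal, so the left-hand side of the desired inequality in $X$ dominates its spherical analogue. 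The right-hand side depends only on the (unchanged) side lengths, so it suffices to prove the asserted estimate for a spherical quadrilateral.

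\emph{Step 2 (Spherical identities).} For the spherical quadrilateral, split along the diagonal $\tilde Q\tilde S$ into triangles $\triangle \tilde P \tilde Q \tilde S$ and $\triangle \tilde Q \tilde R \tilde S$. The spherical law of cosines gives
\[
\cos d_{QS} = \cos d_{PQ} \cos d_{PS} + \sin d_{PQ} \sin d_{PS} \cos \alpha,
\]
where $\alpha$ is the interior angle at $\tilde P$, together with an analogous identity coming from the second triangle. Splitting instead along $\tilde P\tilde R$ yields two further identities expressing $\cos d_{PR}$ in terms of the sides and the angles at $\tilde Q$ or $\tilde S$. Combining an appropriate pair produces an exact expression for $\cos d_{PR} + \cos d_{QS}$ in terms of the four side lengths and two angles.

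\emph{Step 3 (Taylor expansion).} Set $\epsilon := d_{QR} - d_{SP}$ and treat $d_{PQ}$, $d_{RS}$, $\epsilon$ as small parameters. Use $\cos t = 1 - \tfrac{t^2}{2} + O(t^4)$ and $\sin t = t + O(t^3)$ to expand the identities of Step 2 around the degenerate configuration $d_{PQ} = d_{RS} = 0$, $d_{QR} = d_{SP}$. The leading constant term contributes $\cos d_{QR} + \cos d_{PS}$; the $-\tfrac{1}{2}(d_{PQ}^2 + d_{RS}^2)$ piece arises from second-order expansion of $\cos d_{PQ}$ and $\cos d_{RS}$; and the coefficient $\tfrac{1}{4}(1+\cos d_{PS})$ in front of $(d_{QR}-d_{PS})^2$ emerges from combining the second derivative of $\cos(d_{PS}+\epsilon)$ with the $\sin d_{PQ}\,\sin d_{PS}\,\cos\alpha$ cross contributions. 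Everything remaining of joint order $\geq 3$ in $d_{PQ}, d_{RS}, \epsilon$ is absorbed into $\mathrm{Cub}(d_{PQ}, d_{RS}, d_{QR}-d_{SP})$.

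\emph{Main obstacle.} The essential difficulty is the delicate bookkeeping required to isolate precisely the coefficient $\tfrac{1}{4}(1+\cos d_{PS})$. This amounts to reconciling the two different splittings of the spherical quadrilateral, verifying that the mixed quadratic terms $d_{PQ}\, d_{RS}$, $d_{PQ}\, \epsilon$, and $d_{RS}\, \epsilon$ either cancel or combine into the stated coefficient, and confirming that every remaining higher-order contribution is genuinely cubic in the three small parameters and hence can be swept into $\mathrm{Cub}(\cdot)$.
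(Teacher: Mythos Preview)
Your Step~1 reduction to the sphere via Reshetnyak majorization is correct and is exactly what the paper does (it simply says ``it suffices to prove the inequality holds for a quadrilateral in $\mathbb S^2$'').

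Steps~2--3, however, do not match the paper's argument and contain a genuine gap. The spherical law of cosines expressions you write down involve interior angles such as $\alpha$, and a spherical quadrilateral with prescribed side lengths is \emph{not} rigid: there is a one-parameter family, and the angles (hence the diagonals) vary over it. In your formula $\cos d_{QS} = \cos d_{PQ}\cos d_{PS} + \sin d_{PQ}\sin d_{PS}\cos\alpha$, the angle-dependent term is of order $d_{PQ}$, i.e.\ \emph{linear} in one of your small parameters, and $\cos\alpha$ can take any value in $[-1,1]$. So these terms cannot simply be absorbed into a cubic remainder, nor do you explain any mechanism (optimization over the free parameter, an angle constraint, etc.) that would convert the angle-dependent identities into a lower bound depending only on side lengths. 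The ``delicate bookkeeping'' you flag is not the real obstacle; the missing idea is how to eliminate the angles altogether.

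The paper sidesteps this entirely. It views $\mathbb S^2\subset\mathbb R^3$, notes that the Euclidean chord satisfies $\overline{AB}^2 = 2 - 2\cos d_{AB}$, and applies the elementary Euclidean quadrilateral inequality
\[
\overline{PR}^2 + \overline{QS}^2 \le \overline{PQ}^2 + \overline{QR}^2 + \overline{RS}^2 + \overline{SP}^2 - (\overline{SP}-\overline{QR})^2,
\]
proved in one line from $|B+D|^2 \ge (|B|-|D|)^2$ with $A+B+C+D=0$. This immediately gives an inequality involving only $\cos$ of the six pairwise distances, with no angles present; the stated estimate then follows from the Taylor expansions $\cos d_{PQ} = 1 - \tfrac12 d_{PQ}^2 + O(d_{PQ}^4)$ and $(\sqrt{2-2\cos d_{QR}} - \sqrt{2-2\cos d_{SP}})^2 = \tfrac12(1+\cos d_{PS})(d_{QR}-d_{SP})^2 + O(|d_{QR}-d_{SP}|^3)$. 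The passage through $\mathbb R^3$ is the key device you are missing.
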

\begin{proof}
It suffices to prove the inequality holds for a quadrilateral  $\Box PQRS$ in $\mathbb S^2$. By viewing $\mathbb S^2$ as a unit sphere in  $\mathbb{R}^3$, the points $P, Q, R, S$ determine a quadrilateral in $\mathbb{R}^3$. Applying the identity for the quadrilateral in $\mathbb{R}^3$ (cf. \cite[Corollary 2.1.3]{korevaar-schoen1}), 
\begin{align*}
	\overline{PR}^2 + \overline{QS}^2 &\leq \overline{PQ}^2 + \overline{QR}^2 + \overline{RS}^2 + \overline{SP}^2 -( \overline{SP} - \overline{QR})^2
\end{align*}
where $\overline{AB}$ denotes the Euclidean distance between $A$ and $B$ in $\mathbb{R}^3$. To prove this, consider the vectors $A=Q-P, B=R-Q, C=S-R, D=P-S$. Then
\begin{align*}
\overline{PR}^2 + \overline{QS}^2 &= \frac 12 \left( |A+B|^2 + |C+D|^2 +|B+C|^2+|D+A|^2\right)\\
&= |A|^2+|B|^2+ |C|^2+|D|^2+\left( A \cdot B + C \cdot B + D \cdot A +D\cdot C\right)\\
& = |A|^2+|B|^2+ |C|^2+|D|^2 - |B+D|^2 \text{ since } A+B+C+D=0\\
& \leq  |A|^2+|B|^2+ |C|^2+|D|^2 - \left||B| - |D| \right|^2. 
\end{align*}

Note that $\overline{AB}^2 = 2-2\cos d_{AB}$, we obtain 
\begin{align*}
	\cos d_{PR} + \cos d_{QS} &= - 2 + \cos d_{PQ} + \cos d_{RS} + \cos d_{QR} + \cos d_{PS}\\
	&\quad +\frac{1}{2} \left(\sqrt{2-2\cos d_{QR}} - \sqrt{2-2\cos d_{SP}} \right)^2.
\end{align*}
The lemma follows from the following Taylor expansion:
\begin{align*}
	- 2 + \cos d_{PQ} + \cos d_{RS} &= - \frac{1}{2} d_{PQ}^2 -\frac{1}{2} d_{RS}^2 + O( d_{RS}^4 + d_{PQ} ^4)\\
	\left(\sqrt{2-2\cos d_{QR}} - \sqrt{2-2\cos d_{SP}} \right)^2&=\left( \frac{ \sin d_{SP}}{\sqrt{2-2\cos d_{SP}}}(d_{QR} - d_{SP}) + O\left((d_{QR} - d_{SP})^2\right)\right)^2\\
	&= \frac{1+\cos d_{PS}}{2}(d_{QR} - d_{SP})^2 + O\left((d_{QR} - d_{SP})^3\right).
\end{align*}
\end{proof}

\begin{lemma}[{\cite[Estimate I, Page 11]{serbinowski}}] \label{lemma:estimateI}
Let $\Box PQRS$ be a quadrilateral in the  CAT(1) space $X$.  Let $P_{\frac{1}{2}}$ be the mid-point between $P$ and $S$, and let $Q_{\frac{1}{2}}$ be the mid-point between $Q$ and $R$.  Then 
\begin{align*}
	\cos^2 \left( \frac{d_{PS}}{2} \right) d^2(Q_{\frac{1}{2}}, P_{\frac{1}{2}}) &\le \frac{1}{2}  (d_{PQ}^2 + d_{RS}^2) -\frac{1}{4}  (d_{QR} - d_{PS})^2 \\&\quad
	+ \mathrm{Cub}\left(d_{PQ}, d_{RS}, d(P_{\frac 12}, Q_{\frac 12}), d_{QR}-d_{SP} \right).
\end{align*} 
\end{lemma}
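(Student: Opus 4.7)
The plan is to reduce the estimate to the classical NPC quadrilateral comparison on the Euclidean cone $\mathcal{C}(X)$. Since $X$ is CAT(1), the cone $\mathcal{C}(X)$ equipped with the cone metric $D$ (as introduced in the proof of Lemma~\ref{luckhaus}) is NPC. First I lift the four vertices to $\hat{P} = [P,1]$, $\hat{Q} = [Q,1]$, $\hat{R} = [R,1]$, $\hat{S} = [S,1]$ in $\mathcal{C}(X)$. A short computation (unrolling the two-dimensional cone sector spanned by $\hat{P}, \hat{S}$ to a Euclidean wedge of angle $d_{PS}$) identifies the unique NPC midpoint of $\hat{P}$ and $\hat{S}$ as $\hat{M}_1 = [P_{\frac{1}{2}}, \cos(d_{PS}/2)]$, and similarly the midpoint of $\hat{Q}$ and $\hat{R}$ as $\hat{M}_2 = [Q_{\frac{1}{2}}, \cos(d_{QR}/2)]$.

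Next I apply the NPC quadrilateral inequality of Korevaar--Schoen \cite{korevaar-schoen1} to $\hat{P}, \hat{Q}, \hat{R}, \hat{S}$ in $\mathcal{C}(X)$:
\[
D^{2}(\hat{M}_1, \hat{M}_2) \le \tfrac{1}{2}\bigl(D^{2}(\hat{P}, \hat{Q}) + D^{2}(\hat{R}, \hat{S})\bigr) - \tfrac{1}{4}\bigl(D(\hat{Q}, \hat{R}) - D(\hat{P}, \hat{S})\bigr)^{2}.
\]
Using the cone distance formulas $D^{2}([A, r_A], [B, r_B]) = r_A^{2} + r_B^{2} - 2 r_A r_B \cos d(A,B)$ and $D([A,1], [B,1]) = 2\sin(d(A,B)/2)$, together with the identity $(\cos u - \cos v)^{2} + (\sin u - \sin v)^{2} = 2 - 2\cos(u - v)$ applied with $u = d_{PS}/2$ and $v = d_{QR}/2$, the inequality simplifies to the spherical form
\[
2\cos(d_{PS}/2)\cos(d_{QR}/2)\bigl(1 - \cos d(P_{\frac{1}{2}}, Q_{\frac{1}{2}})\bigr) \le 2\cos\bigl((d_{PS} - d_{QR})/2\bigr) - \cos d_{PQ} - \cos d_{RS}.
\]

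Finally I Taylor-expand to quadratic order. On the left side, $2(1 - \cos \ell) = \ell^{2} + O(\ell^{4})$ with $\ell = d(P_{\frac{1}{2}}, Q_{\frac{1}{2}})$, and replacing $\cos(d_{QR}/2)$ by $\cos(d_{PS}/2)$ introduces an error of order $|d_{QR} - d_{PS}| \cdot \ell^{2}$, which is cubic in the listed variables. On the right side, expanding each cosine to second order yields $\tfrac{1}{2}(d_{PQ}^{2} + d_{RS}^{2}) - \tfrac{1}{4}(d_{QR} - d_{PS})^{2}$ modulo quartic error. Assembling these expansions produces the claimed inequality.

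The main obstacle is the careful bookkeeping of the cubic error terms: one must verify that the discrepancy $\bigl(\cos(d_{PS}/2)\cos(d_{QR}/2) - \cos^{2}(d_{PS}/2)\bigr)\,\ell^{2}$ on the left, together with all the quartic Taylor remainders on the right, are genuinely absorbed into $\mathrm{Cub}(d_{PQ}, d_{RS}, d(P_{\frac{1}{2}}, Q_{\frac{1}{2}}), d_{QR} - d_{SP})$ with the prescribed set of arguments. The identification of the NPC cone midpoints via sector unrolling is elementary but is precisely what produces the correct coefficient $\cos^{2}(d_{PS}/2)$ on the left-hand side.
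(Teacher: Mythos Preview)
Your argument is correct and takes a genuinely different route from the paper's proof. The paper works directly in the CAT(1) space: it applies the spherical midpoint comparison $\cos d(Q_{1/2},A) \ge \tfrac{1}{2\cos(d_{QR}/2)}(\cos d_{QA}+\cos d_{RA})$ three times to bound $\cos d(Q_{1/2},P_{1/2})$ from below by a combination of the four side and two diagonal cosines, then invokes the Reshetnyak diagonal estimate (equation~\eqref{equation:diagonal_sphere}) and a page of trigonometric simplification to extract the quadratic terms. You instead lift the four vertices to the NPC cone $\mathcal{C}(X)$, identify the cone midpoints as $[P_{1/2},\cos(d_{PS}/2)]$ and $[Q_{1/2},\cos(d_{QR}/2)]$ via sector unrolling, and apply the NPC midpoint--midpoint inequality (which, as you note, follows from the basic CAT(0) inequality together with the Korevaar--Schoen quadrilateral bound). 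Your approach is more conceptual: the cone construction packages the curvature comparison in a single step, and the factor $\cos^2(d_{PS}/2)$ appears transparently as the squared radial coordinate of the cone midpoint rather than emerging from a trigonometric identity. The paper's approach, by contrast, stays closer to the spherical geometry and reuses the Reshetnyak lemma it has already proved, at the cost of a longer computation. Both arrive at the same clean spherical inequality $2\cos(d_{PS}/2)\cos(d_{QR}/2)(1-\cos d(P_{1/2},Q_{1/2})) \le 2\cos\tfrac{d_{PS}-d_{QR}}{2} - \cos d_{PQ} - \cos d_{RS}$ before the final Taylor expansion.
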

\begin{proof}
As a direct consequence of law of cosine (see also the figure below),   we have the following inequalities
\begin{align*}
	\cos d(Q_{\frac{1}{2}}, P_{\frac{1}{2}}) &\ge \alpha \left(\cos d(Q_{\frac{1}{2}}, S) + \cos d (Q_{\frac{1}{2}}, P) \right)\\
	\cos d(Q_{\frac{1}{2}}, S) & \ge \beta  \left(\cos d_{R S} + \cos d_{QS} \right)\\
	\cos d(Q_{\frac{1}{2}}, P) & \ge \beta  \left(\cos d_{RP} + \cos d_{QP} \right)
\end{align*}
where 
\begin{align*}
\alpha =\frac{1}{2\cos \left(\frac{d_{PS}}{2}\right)}\quad \mbox{and} \quad \beta = \frac{1}{2\cos \left( \frac{d_{QR}}{2}\right)}.
\end{align*} 
\begin{center}
\begin{tikzpicture}
\draw (0,0) node[anchor = east]{$P$} -- (3,0) node[anchor =west] {$S$} -- (3,2) node [anchor = west]{$R$}-- (0, 2) node[anchor= east]{$Q$}-- cycle;
\draw (1.5, 2) node[anchor = south]{$Q_{\frac{1}{2}}$} --(1.5,0)  node[anchor =north]  {$P_{\frac{1}{2}}$};
\fill (1.5,2) circle (1pt);
\draw (0,0) -- (1.5, 2);
\draw (3,0) -- (1.5,2);
\draw[blue] (3,0) -- (0,2);
\draw[blue] (1.5, 2) -- (3 ,0);
\draw[blue] (3,0) -- (3,2);
\draw [red] (3,2) -- (0,0);
\draw[red] (1.5,2) -- (0,0); 
\draw[red] (0,2) -- (0,0);
\end{tikzpicture}
\end{center}
Combining the above inequalities yields 
\begin{align*}
		\cos d(Q_{\frac{1}{2}}, P_{\frac{1}{2}})  \ge \alpha\beta \left(\cos d_{RS} + \cos d_{QS} + \cos d_{R P} + \cos d_{Q P} \right).
\end{align*}
We apply \eqref{equation:diagonal_sphere} for the sum of diagonals $\cos d_{QS} + \cos d_{R P}$ and Taylor expansion for $\cos d_{RS} $ and $\cos d_{QP} $. It yields 
\begin{align*}
	\cos d(Q_{\frac{1}{2}}, P_{\frac{1}{2}})& \ge \alpha \beta \bigg(2-(d_{PQ}^2 + d_{RS}^2) + \frac{1}{4} (1+ \cos d_{PS}) (d_{QR} - d_{PS})^2 + \cos d_{QR} + \cos d_{PS}\bigg)\\ & \quad +\mathrm{Cub}\left( d_{PQ}, d_{RS}, d_{QR}-d_{SP}\right)\\
	&= \alpha\beta\left(2+  \cos d_{QR} + \cos d_{PS} + \frac{1}{4} (1+ \cos d_{PS}) (d_{QR} - d_{PS})^2\right)- \alpha\beta (d_{PQ}^2 + d_{RS}^2) \\
	&\quad +   \mathrm{Cub}\left( d_{PQ}, d_{RS}, d_{QR}-d_{SP}\right).
\end{align*}
Note that 
\begin{align*}
	&2+  \cos d_{QR} + \cos d_{PS} + \frac{1}{4} (1+ \cos d_{PS}) (d_{QR} - d_{PS})^2 \\
	&=2( \cos^2\frac{d_{QR}}{2} + \cos^2 \frac{d_{PS}}{2}) + \frac{1}{2}\cos^2 \frac{d_{PS}}2(d_{QR} - d_{PS})^2\\
	&= 2\left(\cos \frac{d_{QR}}{2} - \cos \frac{d_{PS}}{2} \right)^2 + 4 \cos \frac{d_{QR}}{2}\cos \frac{d_{PS}}{2} +\frac{1}{2}\cos^2 \frac{d_{PS}}2 (d_{QR} - d_{PS})^2\\
	&= \frac{1}{2} \sin^2 \frac{d_{PS}}{2} (d_{QR}-d_{PS})^2 + 4 \cos \frac{d_{QR}}{2}\cos \frac{d_{PS}}{2} +\frac{1}{2}\cos^2 \frac{d_{PS}}2(d_{QR} - d_{PS})^2 + O(|d_{QR}-d_{PS}|^3)\\
	&= \frac{1}{2} (d_{QR}-d_{PS})^2 + 4 \cos \frac{d_{QR}}{2}\cos \frac{d_{PS}}{2}+ O(|d_{QR}-d_{PS}|^3).
\end{align*}
Since $\alpha\beta = \alpha^2 + O(|d_{QR} - d_{PS}|)$, we have
\begin{align*}
	\cos d(Q_{\frac{1}{2}}, P_{\frac{1}{2}})& \ge 1 -\alpha^2 (d_{PQ}^2 + d_{RS}^2)+ \frac{1}{2} \alpha^2 (d_{QR}-d_{PS})^2  +\mathrm{Cub}\left(d_{PQ},d_{RS},d_{QR}-d_{SP}\right).\end{align*}
The lemma follows as 
\[
	\cos d(Q_{\frac{1}{2}}, P_{\frac{1}{2}}) = 1 - \frac{d^2(Q_{\frac{1}{2}}, P_{\frac{1}{2}} )}{2}  + O(d^4(Q_{\frac{1}{2}}, P_{\frac{1}{2}} ) ).
\]
\end{proof}

\begin{definition}
Given a metric space $(X,d)$ and a geodesic $\gamma_{PQ}$ with $d_{PQ}<\pi$, for $\tau \in [0,1]$ let $(1-\tau) P + \tau Q$ denote the point on $\gamma_{PQ}$ at distance $\tau d_{PQ}$ from $P$. That is
\[
d((1-\tau )P + \tau Q,P) = \tau d_{PQ}.
\]
\end{definition}

\begin{lemma}[cf. {\cite[Estimate II, Page 13]{serbinowski}}] \label{lemma:estimateII}
Let $\Delta PQS$ be a triangle in the CAT(1) space $X$. For a  pair of numbers $0\le \eta, \eta' \le 1$ define 
\begin{align*}
	P_{\eta'} &= (1-\eta') P + \eta' Q\\
	S_\eta& = (1-\eta) S + \eta Q. 
\end{align*}
Then 
\begin{align*}
	d^2 (P_{\eta'}, S_{\eta}) &\le  \frac{\sin^2((1-\eta) d_{QS})}{\sin^2 d_{QS}} (d_{PS}^2- (d_{QS}-d_{QP})^2) +  \left((1-\eta)(d_{QS}-d_{QP}) + (\eta' - \eta) d_{QS}\right)^2  \\
	&\quad+ \mathrm{Cub}\left( d_{PS}, d_{QS}-d_{QP}, \eta-\eta' \right).
\end{align*}
\end{lemma}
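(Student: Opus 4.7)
The plan is to reduce to the model sphere $\mathbb{S}^2$ via the CAT(1) comparison principle and then perform a direct spherical-trigonometry computation expanded in the three designated small parameters. Let $\tilde{P},\tilde{Q},\tilde{S}\in\mathbb{S}^2$ form a comparison triangle with side lengths matching $a=d_{QS}$, $b=d_{QP}$, $c=d_{PS}$ (which requires a standing assumption such as perimeter $<2\pi$), and let $\tilde{P}_{\eta'},\tilde{S}_{\eta}$ be the corresponding interpolation points on $\tilde{P}\tilde{Q}$, $\tilde{S}\tilde{Q}$. The CAT(1) condition gives $d(P_{\eta'},S_\eta)\le d_{\mathbb{S}^2}(\tilde{P}_{\eta'},\tilde{S}_\eta)$, so it suffices to establish the estimate on $\mathbb{S}^2$.

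Set $x=(1-\eta')b$, $y=(1-\eta)a$, and let $\alpha$ be the angle at the common vertex $\tilde{Q}$. The spherical law of cosines at $\tilde{Q}$ applied to $\triangle\tilde{Q}\tilde{P}_{\eta'}\tilde{S}_{\eta}$, together with $\cos(x-y)=\cos x\cos y+\sin x\sin y$ and $\cos\alpha=1-2\sin^2\frac{\alpha}{2}$, rearranges to the half-angle identity
\[
\sin^2\tfrac{d(P_{\eta'},S_\eta)}{2}=\sin^2\tfrac{x-y}{2}+\sin x\sin y\,\sin^2\tfrac{\alpha}{2}.
\]
Applying the same manipulation to the original triangle $\triangle\tilde{Q}\tilde{P}\tilde{S}$ yields $\sin^2\frac{c}{2}=\sin^2\frac{a-b}{2}+\sin a\sin b\,\sin^2\frac{\alpha}{2}$; eliminating $\sin^2\frac{\alpha}{2}$ gives the exact identity
\[
\sin^2\tfrac{d(P_{\eta'},S_\eta)}{2}=\sin^2\tfrac{x-y}{2}+\frac{\sin x\sin y}{\sin a\sin b}\Bigl[\sin^2\tfrac{c}{2}-\sin^2\tfrac{a-b}{2}\Bigr].
\]

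The remainder is Taylor expansion in the small parameters $c,\,a-b,\,\eta-\eta'$. A direct half-angle computation gives $\sin^2\frac{c}{2}-\sin^2\frac{a-b}{2}=\tfrac14(c^2-(a-b)^2)+O(c^4)$, a non-negative quantity by the triangle inequality $c\ge|a-b|$. Writing $x-y=-(1-\eta)(a-b)-(\eta'-\eta)a+(\eta'-\eta)(a-b)$ and squaring, $(x-y)^2$ equals $((1-\eta)(a-b)+(\eta'-\eta)a)^2$ plus remainders of total degree at least three in $(a-b,\eta-\eta')$. The coefficient $\sin x\sin y/(\sin a\sin b)$ differs from $\sin^2((1-\eta)a)/\sin^2 a$ by a correction of first order in $(a-b,\eta-\eta')$, whose product with the quadratic factor $c^2-(a-b)^2$ contributes only cubic terms. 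Finally the identity itself shows $\sin^2\frac{d}{2}=O(\text{small}^2)$, hence $d=O(\text{small})$, so inverting via $d^2=4\sin^2\frac{d}{2}+O(\sin^4\frac{d}{2})$ introduces only an $O(\text{small}^4)\subset\mathrm{Cub}(d_{PS},d_{QS}-d_{QP},\eta-\eta')$ error. Assembling the pieces yields the claimed bound.

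The principal obstacle is combinatorial rather than conceptual: each Taylor expansion produces a variety of remainders, and one must verify in every case that the leftovers have total degree at least three in the three designated small quantities while $a$ and $b$ themselves remain of order one (bounded by constants determined by the geometry of the CAT(1) space). The trigonometric identities are elementary, but the number of simultaneously varying parameters and the asymmetric appearance of $a$ versus $b$ in the target right-hand side demand careful bookkeeping.
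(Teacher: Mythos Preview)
Your proof is correct and follows essentially the same route as the paper: reduce to $\mathbb{S}^2$ by CAT(1) comparison, apply the spherical law of cosines at the common vertex $Q$ to both the original triangle and the sub-triangle, eliminate the vertex angle, and Taylor expand in the three small parameters. The only cosmetic difference is that you work with the half-angle form $\sin^2\tfrac{d}{2}$ to obtain an exact identity before expanding, whereas the paper manipulates $\cos d$ directly; the two are equivalent via $1-\cos d=2\sin^2\tfrac{d}{2}$.
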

\begin{proof}Again we prove the inequality for a quadrilateral on $\mathbb S^2$. 
Denote $x = d_{QS}$ and $y = d_{QP}$. Denote 
\[
		\alpha_\eta= \frac{\sin(\eta d_{QS})}{\sin d_{QS}} =\frac{\sin(\eta x)}{\sin x}, \qquad \beta_{\eta'} = \frac{\sin(\eta' d_{QP})}{\sin d_{QP}} = \frac{\sin(\eta' y)}{ \sin y}.
\]
\begin{center}
\begin{tikzpicture}
\draw (0,0) node[anchor = east]{$Q$} --  (3, 0) node[anchor =west] {$S$} -- (2,2) node[anchor = west]{$P$}  -- cycle;
\draw (2,0) node[anchor=north west]{$S_\eta$}-- (1,1) node[anchor=south east]{$P_{\eta'}$};
\end{tikzpicture}
\end{center}
By the law of cosines on the sphere (see the figure above),
\begin{align*}
	\cos d_{PS} &= \cos x \cos y+ \sin x \sin y \cos \theta = \cos (x-y) + \sin x \sin y (\cos \theta -1)\\
	\cos d({P_{\eta'}}, S_{\eta}) &\ge  \cos ((1-\eta) x) \cos ((1-\eta')y)+ \sin ((1-\eta)x) \sin ((1-\eta')y )\cos \theta\\
	& = \cos ((1-\eta)x- (1-\eta')y) + \sin ((1-\eta)x) \sin ((1-\eta')y) (\cos \theta -1),
\end{align*}
where $\theta$ denotes the angle $\angle PQS$  on $\mathbb S^2$. Substituting the term  $(\cos\theta -1)$ of  the second inequality with the one in the first identity, we obtain
\begin{align*}
	\cos d({P_{\eta'}}, S_{\eta})  &\ge  \cos ((1-\eta)x- (1-\eta')y) + \alpha_{1-\eta} \beta_{1-\eta'} (\cos d_{PS} - \cos (x-y))\\
	&= \cos \left((1-\eta) (x-y) + (\eta' - \eta)x + (\eta'-\eta)(y-x)\right) + \alpha_{1-\eta}^2 (\cos d_{PS} - \cos (x-y))\\
	&\quad + \alpha_{1-\eta} (\beta_{1-\eta'}-\alpha_{1-\eta})(\cos d_{PS} - \cos (x-y)).
\end{align*}
Using the Taylor expansion $\cos a = 1 - \frac{a^2}{2} +O(a^4)$ and $(\beta_{1-\eta'}-\alpha_{1-\eta}) = O(|\eta'-\eta|+ |x-y| )$, we derive 
\begin{align*}
	\cos d({P_{\eta'}}, S_{\eta})  &\ge 1 - \frac{((1-\eta)(x-y) + (\eta' - \eta) x)^2}{2} + \alpha_{1-\eta}^2 \left( - \frac{d_{PS}^2}{2} + \frac{(x-y)^2}{2} \right)\\
	&\quad + \mathrm{Cub}\left(|\eta'-\eta| ,|x-y|,  d_{PS}\right).
\end{align*}
It implies that 
\begin{align*}
	d^2({P_{\eta'}}, S_{\eta}) &\le \alpha^2_{1-\eta} (d_{PS}^2- (x-y)^2) +  ((1-\eta)(x-y) + (\eta' - \eta) x)^2  \\
		&\quad +\mathrm{Cub}\left(|\eta'-\eta| ,|x-y|,  d_{PS}\right).
\end{align*}
\end{proof}

\begin{corollary}\label{corollary:interpolation-point}
 Let $u: \Omega \to \mathcal B_{\rho}(Q)$ be a finite energy map and $\eta\in C^\infty_C(\Omega, [0,1])$. Define $\hat u : \Omega \to \mathcal B_{\rho}(Q)$ as
\[
	\hat{u}(x) = (1-\eta(x)) u(x) + \eta(x)Q. 
\]
Then $\hat{u}$ has finite energy, and for any smooth vector field $W \in \Gamma(\Omega)$ we have
\[
	|(\hat{u})_*(W)|^2 \le \left(\frac{\sin(1-\eta)R^u}{\sin R^u} \right)^2 (|u_*(W)|^2 - |\nabla_W R^u|^2) + |\nabla_W ((1-\eta)R^u)|^2,
\]
where $R^u(x) = d(u(x), Q)$.
\end{corollary}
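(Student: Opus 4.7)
The plan is to apply Lemma~\ref{lemma:estimateII} pointwise to two nearby points in $\Omega$ and then pass to the infinitesimal limit, recognizing on both sides the directional energy densities of $u$ and $\hat u$.

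First I would fix $x \in \Omega$, let $\gamma(t)$ be an integral curve of $W$ starting at $x$, and set $y=\gamma(t)$ for small $t>0$. Then apply Lemma~\ref{lemma:estimateII} to the triangle $\Delta PQS$ in $X$ with vertices $P=u(y)$, $S=u(x)$ and the fixed point $Q$, using the parameters $\eta(x)$ (as the lemma's $\eta$) and $\eta'=\eta(y)$. Under these identifications $S_{\eta(x)}=\hat u(x)$, $P_{\eta'}=\hat u(y)$, $d_{QS}=R^u(x)$, $d_{QP}=R^u(y)$, and $d_{PS}=d(u(y),u(x))$, so the lemma reads
\begin{align*}
d^2(\hat u(y),\hat u(x)) &\le \frac{\sin^2((1-\eta(x))R^u(x))}{\sin^2 R^u(x)}\bigl[d^2(u(y),u(x))-(R^u(x)-R^u(y))^2\bigr] \\
&\quad + \bigl[(1-\eta(x))(R^u(x)-R^u(y))+(\eta(y)-\eta(x))R^u(x)\bigr]^2 + \mathrm{Cub}.
\end{align*}
The hypothesis $u(\Omega)\subset \mathcal B_\rho(Q)$ with $\rho<\pi/4$ ensures that all the geodesics involved are unique and lie in a convex region, so $\hat u$ is well-defined and Lemma~\ref{lemma:estimateII} genuinely applies.

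Next, I would divide by $t^2$ and send $t\to 0$. In the Korevaar--Schoen framework (see \cite[Sec.~1.8 and Thm.~1.9.6]{korevaar-schoen1}), $t^{-2}d^2(u(y),u(x)) \to |u_*(W)|^2(x)$; since $R^u(\cdot)=d(u(\cdot),Q)$ is $1$-Lipschitz in its $u$-argument, the difference quotient $t^{-1}(R^u(y)-R^u(x))$ converges a.e.\ to $\nabla_W R^u$; and smoothness of $\eta$ gives $t^{-1}(\eta(y)-\eta(x))\to \nabla_W\eta$. The cubic remainder is $O(t^3)$ and disappears after dividing by $t^2$. The middle summand, divided by $t$ inside the square, converges to
\[
-(1-\eta)\nabla_W R^u + (\nabla_W\eta)R^u = -\nabla_W((1-\eta)R^u),
\]
whose square is exactly $|\nabla_W((1-\eta)R^u)|^2$. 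Recognizing the limit of $t^{-2}d^2(\hat u(y),\hat u(x))$ as $|(\hat u)_*(W)|^2(x)$ then produces the claimed inequality almost everywhere.

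Finite energy of $\hat u$ would follow by observing that the prefactor satisfies $\sin((1-\eta)R^u)/\sin R^u\le 1$ on the relevant range (since $(1-\eta)R^u\le R^u<\pi/2$ and $\sin$ is increasing there), and by estimating $|\nabla_W((1-\eta)R^u)|^2 \le 2\rho^2|\nabla_W\eta|^2 + 2|\nabla_W R^u|^2 \le 2\rho^2|\nabla_W\eta|^2 + 2|u_*(W)|^2$. This gives an $L^1$ upper bound for $|(\hat u)_*(W)|^2$, so $\hat u\in W^{1,2}(\Omega,X)$. The main obstacle is the rigorous passage to the limit in the Korevaar--Schoen sense: one must ensure that the rescaled distance quotients on both sides converge to the intrinsic directional energy densities, which requires combining the $L^1$-convergence of $e_t^u := t^{-2}d^2(u(y),u(x))$ with the a.e.\ differentiability of $\eta$ and of the Lipschitz composition $R^u$. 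The rest is just a first-order Taylor expansion of Lemma~\ref{lemma:estimateII}.
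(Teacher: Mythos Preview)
Your approach is correct and is essentially the one the paper has in mind: the paper's ``proof'' is just the one-line remark that the cubic error terms in Lemma~\ref{lemma:estimateII} become the product of an $L^1$ function and a term tending to zero, so they vanish in the limit; you have simply written out the details of applying Lemma~\ref{lemma:estimateII} to the triangle $\Delta\, u(y)\, Q\, u(x)$ with parameters $\eta(x),\eta(y)$, dividing by $t^2$, and identifying each limit. Your caveat about the Korevaar--Schoen limiting procedure is exactly the point the paper's remark is addressing, and your finite-energy estimate for $\hat u$ is a reasonable addendum.
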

Note that every error term that appeared in Lemma \ref{lemma:estimateII} will converge to the product of an $L^1$ function and a term that goes to zero. So all error terms vanish when taking limits.

\begin{lemma}[cf. {\cite[Estimate III, page 19]{serbinowski}}]\label{EstIII}
Let $\Box PQRS$ be a quadrilateral in a CAT(1) space $X$. For $\eta', \eta \in [0,1]$ define
\[
	Q_{\eta'} = (1-\eta') Q + \eta' R, \quad P_{\eta} = (1-\eta) P + \eta S. 
\]
Then 
\begin{align*}
	d^2(Q_{\eta'}, P_{\eta})&	 + d^2 (Q_{1-\eta'}, P_{1-\eta})\\
	&\le \left(1 + 2\eta d_{PS} \tan (\frac{1}{2} d_{PS}) \right) (d_{PQ}^2 + d_{RS}^2) - 2\eta \left( 1+ \frac{1}{2} d_{PS} \tan( \frac{1}{2} d_{PS})  \right) (d_{QR} - d_{PS})^2 \\
	&\quad +2(2\eta-1)(\eta' - \eta) d_{PS} (d_{QR} - d_{PS})\\
	&\quad + \eta^2 \mathrm{Quad}(d_{PQ}, d_{RS}, d_{QR}-d_{PS})+ \mathrm{Cub}\left(d_{QR} - d_{PS}, d_{PQ}, d_{RS},\eta-\eta' \right)\\
	%
\end{align*}
\end{lemma}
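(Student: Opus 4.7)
The plan is to mirror the strategy of Estimates~I and~II: first reduce to a quadrilateral on $\mathbb{S}^2$, then combine spherical interpolation with Reshetnyak's sum-of-diagonals inequality and expand in the small parameters. By the CAT(1) comparison principle, any quadrilateral in $X$ admits a spherical majorant with the same side lengths whose geodesic segments majorize the originals, so the distances $d(Q_{\eta'},P_\eta)$ and $d(Q_{1-\eta'},P_{1-\eta})$ in $X$ are bounded above by their spherical counterparts. It therefore suffices to verify the inequality for a quadrilateral in $\mathbb{S}^2$.

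On $\mathbb{S}^2\subset\mathbb{R}^3$ the interpolation formula gives
\[
P_\eta \;=\; \frac{\sin((1-\eta)d_{PS})\,P+\sin(\eta\, d_{PS})\,S}{\sin d_{PS}}, \qquad Q_{\eta'}\;=\; \frac{\sin((1-\eta')d_{QR})\,Q+\sin(\eta'\, d_{QR})\,R}{\sin d_{QR}},
\]
and $\cos d(X,Y)=\langle X,Y\rangle_{\mathbb{R}^3}$. Taking the inner product, I would express $\sin d_{PS}\sin d_{QR}\cos d(Q_{\eta'},P_\eta)$ as an explicit linear combination of $\cos d_{PQ},\cos d_{PR},\cos d_{QS},\cos d_{RS}$; adding the analogous formula for $\cos d(Q_{1-\eta'},P_{1-\eta})$ symmetrizes the coefficients so that $\cos d_{PQ}$ and $\cos d_{RS}$ share one coefficient $A$, and $\cos d_{PR}$ and $\cos d_{QS}$ share another coefficient $B$. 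Product-to-sum identities will give
\[
\frac{A\pm B}{\sin d_{PS}\sin d_{QR}}\;=\;\frac{\cos(\tfrac{(1-2\eta)d_{PS}}{2})\cos(\tfrac{(1-2\eta')d_{QR}}{2})}{\cos(d_{PS}/2)\cos(d_{QR}/2)}\;\pm\;\frac{\sin(\tfrac{(1-2\eta)d_{PS}}{2})\sin(\tfrac{(1-2\eta')d_{QR}}{2})}{\sin(d_{PS}/2)\sin(d_{QR}/2)},
\]
which already exhibits the half-angle structure with $\tan(d_{PS}/2)$ appearing in the conclusion.

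Next I would apply Reshetnyak's inequality~\eqref{equation:diagonal_sphere} to replace $B(\cos d_{PR}+\cos d_{QS})$ by a lower bound in terms of $d_{PQ}^2+d_{RS}^2$, $(d_{QR}-d_{PS})^2$, and $\cos d_{PS}+\cos d_{QR}$. Taylor-expanding $\cos d_{PQ},\cos d_{RS}$ at $0$ and $\cos d_{QR}$ about $\cos d_{PS}$ in the small parameter $\delta:=d_{QR}-d_{PS}$, and using $\cos d(Q_{\eta'},P_\eta)+\cos d(Q_{1-\eta'},P_{1-\eta})=2-\tfrac12(d^2(Q_{\eta'},P_\eta)+d^2(Q_{1-\eta'},P_{1-\eta}))+O(d^4)$ on the left, I divide by $\tfrac12\sin d_{PS}\sin d_{QR}$ and rearrange. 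The coefficient $1+2\eta\, d_{PS}\tan(d_{PS}/2)$ on $d_{PQ}^2+d_{RS}^2$ will arise from expanding $(A+B)/(\sin d_{PS}\sin d_{QR})$ in $(\eta-\eta')$ and $\delta$; the factor $-2\eta(1+\tfrac12 d_{PS}\tan(d_{PS}/2))(d_{QR}-d_{PS})^2$ is the $B/(\sin d_{PS}\sin d_{QR})$ multiple of the Reshetnyak contribution, simplified by $1+\cos d_{PS}=2\cos^2(d_{PS}/2)$.

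The hard part will be the bookkeeping of error terms, since $d_{PS}$ is a non-small parameter (feeding into $\tan(d_{PS}/2)$) while $d_{PQ},d_{RS},\delta,(\eta-\eta')$ are the small quantities whose quadratic and cubic combinations must be absorbed into $\eta^2\mathrm{Quad}(\cdots)$ and $\mathrm{Cub}(\cdots)$. In particular the mixed linear term $2(2\eta-1)(\eta'-\eta)d_{PS}(d_{QR}-d_{PS})$ is the most delicate to extract: it arises at first order from Taylor-expanding in both $\delta$ and $(\eta-\eta')$ the ``constant'' remainder $4-\frac{4A+2B(\cos d_{PS}+\cos d_{QR})}{\sin d_{PS}\sin d_{QR}}$, which vanishes identically when $\eta=\eta'$ and $\delta=0$; isolating the precise factor $(2\eta-1)$ rather than a symmetric function of $\eta$ and $\eta'$ is exactly what accounts for the asymmetric appearance of the final estimate.
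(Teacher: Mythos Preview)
Your proposal is correct and follows essentially the same route as the paper: both arrive at the symmetrized lower bound
\[
\cos d(Q_{\eta'},P_\eta)+\cos d(Q_{1-\eta'},P_{1-\eta})\ \ge\ A\,(\cos d_{PQ}+\cos d_{RS})+B\,(\cos d_{PR}+\cos d_{QS})
\]
with your coefficients $A=\alpha_\eta\beta_{\eta'}+\alpha_{1-\eta}\beta_{1-\eta'}$, $B=\alpha_\eta\beta_{1-\eta'}+\alpha_{1-\eta}\beta_{\eta'}$, then apply~\eqref{equation:diagonal_sphere} to the diagonal term and carry out the same product-to-sum and Taylor analysis (which the paper packages into two short computational sub-lemmas isolating the $(d_{QR}-d_{PS})^2$ coefficient and the mixed $(2\eta-1)(\eta'-\eta)$ term). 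The only difference is organizational: you reduce to $\mathbb S^2$ at the outset via Reshetnyak majorization and compute exact inner products there, whereas the paper stays in $X$ and reaches the identical inequality by iterating the CAT(1) triangle comparison on triangles such as $\triangle Q_{1-\eta'}PS$, $\triangle SQR$, $\triangle PQR$---a slightly more elementary route that avoids invoking the full majorization theorem.
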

\begin{proof}
For notation simplicity, we denote 
\[
	x= d_{PS}, \qquad y = d_{QR},\qquad	\alpha_\eta = \frac{\sin (\eta x)}{\sin x},\qquad \beta_{\eta'} = \frac{\sin (\eta' y)}{\sin y}.
\]
Apply  \cite[Definition 1.6]{serbinowski}  to each of the blue, red, and yellow triangles below.
\begin{center}
\begin{tikzpicture}
\draw (0,0) node[anchor = east]{$P$} -- (3,0) node[anchor =west] {$S$} -- (3,2) node [anchor = west]{$R$}-- (0, 2) node[anchor= east]{$Q$}-- cycle;
\draw[dashed] (1,0) node[anchor = north ]{$P_{1-\eta}$}-- (0.8,2) node[anchor=south]{$Q_{1-\eta'}$};
\draw[dashed] (2,0) node[anchor = north ]{$P_{\eta}$}-- (2.2,2) node[anchor=south]{$Q_{\eta'}$};
\fill (1,0) circle (1pt);
\fill (0.8,2) circle (1pt);
\fill (2,0) circle (1pt);
\fill (2.2,2) circle (1pt);
\draw[blue] (0.8,2) -- (0,0);
\draw[blue] (0.8, 2) -- (3 ,0);
\draw[blue] (0,0) -- (3,0);
\draw[red] (3,0) -- (0,2);
\draw[red] (3,0) -- (3, 2);
\draw[red] (0, 2)--(3, 2);
\draw[yellow] (0,0) -- (0,2); 
\draw[yellow] (0,0) -- (3,2);
\draw[yellow] (3,2)--(0,2);
\end{tikzpicture}
\end{center}
We derive
\begin{align*}
	\cos d(Q_{1-\eta'}, P_{1-\eta}) &\ge \alpha_{\eta} \cos d(Q_{1-\eta'}, S) + \alpha_{1-\eta} \cos d(Q_{1-\eta'}, P)\\
	&\ge \alpha_{\eta} (\beta_{\eta'}\cos d_{SR} + \beta_{1-\eta'} \cos d_{SQ}) + \alpha_{1-\eta} (\beta_{\eta'} \cos d_{PR} + \beta_{1-\eta'} \cos d_{PQ}).
\end{align*}
Compute similarly for $d(Q_{\eta'}, P_{\eta})$ for the highlighted triangles below:
\begin{center}
\begin{tikzpicture}
\draw (0,0) node[anchor = east]{$P$} -- (3,0) node[anchor =west] {$S$} -- (3,2) node [anchor = west]{$R$}-- (0, 2) node[anchor= east]{$Q$}-- cycle;
\draw[dashed] (1,0) node[anchor = north ]{$P_{1-\eta}$}-- (0.8,2) node[anchor=south]{$Q_{1-\eta'}$};
\draw[dashed] (2,0) node[anchor = north ]{$P_{\eta}$}-- (2.2,2) node[anchor=south]{$Q_{\eta'}$};
\fill (1,0) circle (1pt);
\fill (0.8,2) circle (1pt);
\fill (2,0) circle (1pt);
\fill (2.2,2) circle (1pt);
\draw[blue] (2.2,2) -- (0,0);
\draw[blue] (2.2, 2) -- (3 ,0);
\draw[blue] (0,0) -- (3,0);
\draw[red] (3,0) -- (0,2);
\draw[red] (3,0) -- (3, 2);
\draw[red] (0, 2)--(3, 2);
\draw[yellow] (0,0) -- (0,2); 
\draw[yellow] (0,0) -- (3,2);
\draw[yellow] (3,2)--(0,2);
\end{tikzpicture}
\end{center}
We derive
\begin{align*}
	\cos d(Q_{\eta'}, P_{\eta}) &\ge \alpha_{\eta} \cos d(Q_{\eta'}, P) + \alpha_{1-\eta} \cos d(Q_{\eta'}, S)\\
	&\ge \alpha_{\eta} (\beta_{\eta'} \cos d_{PQ} + \beta_{1-\eta'} \cos d_{PR}) + \alpha_{1-\eta} (\beta_{\eta'} \cos d_{SQ} + \beta_{1-\eta'} \cos d_{SR}).
\end{align*}
Adding the above two inequalities, we obtain
\begin{align} \label{equation:quadrilateral-sphere}
\begin{split}
&\cos d(Q_{1-\eta'}, P_{1-\eta}) +\cos d(Q_{\eta'}, P_{\eta})  \\
	&\ge (\alpha_\eta \beta_{\eta'} + \alpha_{1-\eta} \beta_{1-\eta'}) (\cos d_{PQ} + \cos d_{SR}) + (\alpha_\eta \beta_{1-\eta'} +\alpha_{1-\eta} \beta_{\eta'}) (\cos d_{PR} + \cos d_{SQ}).
\end{split}
\end{align}

Applying \eqref{equation:diagonal_sphere} to the term $\cos d_{PR} + \cos d_{SQ}$ and using Taylor expansion, 
the inequality~\eqref{equation:quadrilateral-sphere} becomes
\begin{align*}
	&\cos d(Q_{1-\eta'}, P_{1-\eta}) +\cos d(Q_{\eta'}, P_{\eta}) \notag  \ge  (\alpha_\eta \beta_{\eta'} + \alpha_{1-\eta} \beta_{1-\eta'}) \left(2-\frac{d_{PQ}^2}{2} - \frac{d_{SR}^2}{2}\right) \notag \\
	&\quad +  (\alpha_\eta \beta_{1-\eta'} +\alpha_{1-\eta} \beta_{\eta'}) \left( -\frac{1}{2} (d_{PQ}^2 + d_{SR}^2) + \frac{1}{4} (1+\cos d_{PS}) (d_{QR} - d_{PS})^2 + \cos d_{QR} + \cos d_{PS}\right)\notag\\
	&\quad + \mathrm{Cub}\left( d_{PQ}, d_{RS}, d_{QR}-d_{SP}\right).\notag\\
\end{align*}Hence, 
\begin{align}
	\cos d(Q_{1-\eta'}, P_{1-\eta}) &+\cos d(Q_{\eta'}, P_{\eta}) \notag \\
	&\ge -\frac{1}{2}  (\alpha_\eta \beta_{\eta'} + \alpha_{1-\eta} \beta_{1-\eta'} + \alpha_\eta \beta_{1-\eta'} +\alpha_{1-\eta} \beta_{\eta'})(d_{PQ}^2 + d_{SR}^2) \label{equation:1}\\
	&\quad + 2(\alpha_\eta \beta_{\eta'} + \alpha_{1-\eta} \beta_{1-\eta'}) +  (\alpha_\eta \beta_{1-\eta'} +\alpha_{1-\eta} \beta_{\eta'}) ( \cos d_{QR} + \cos d_{PS})\label{equation:2}\\
	&\quad  +  \frac{1}{4}  (\alpha_\eta \beta_{1-\eta'} +\alpha_{1-\eta} \beta_{\eta'})(1+\cos d_{PS}) (d_{QR} - d_{PS})^2 \label{equation:3}\\
	&\quad  +\mathrm{Cub}\left( d_{PQ}, d_{RS}, d_{QR}-d_{SP}\right).  \notag 
\end{align}

We need the following elementary trigonometric identities to compute \eqref{equation:1}, \eqref{equation:2}, \eqref{equation:3}:
\begin{align*}
	\alpha_\eta \beta_{\eta'} + \alpha_{1-\eta} \beta_{1-\eta'} &= \frac{\sin (\eta - \frac{1}{2})x \sin (\eta' -\frac{1}{2})y}{2 \sin \frac{1}{2} x \sin \frac{1}{2} y} + \frac{\cos (\eta - \frac{1}{2})x \cos (\eta' -\frac{1}{2})y}{2 \cos \frac{1}{2} x \cos \frac{1}{2} y} \\
	\alpha_\eta \beta_{1-\eta'} + \alpha_{1-\eta} \beta_{\eta'} &= - \frac{\sin (\eta - \frac{1}{2})x \sin (\eta' -\frac{1}{2})y}{2 \sin \frac{1}{2} x \sin \frac{1}{2} y} + \frac{\cos (\eta - \frac{1}{2})x \cos (\eta' -\frac{1}{2})y}{2 \cos \frac{1}{2} x \cos \frac{1}{2} y} \\
	\left( \frac{\cos(\eta-\frac{1}{2})x}{\cos \frac{1}{2}x} \right)^2 &= 1 + 2\eta x \tan\frac{1}{2} x + O(\eta^2).
\end{align*}
Noting that
\begin{align*}
	 &\alpha_\eta \beta_{\eta'} + \alpha_{1-\eta} \beta_{1-\eta'} + \alpha_\eta \beta_{1-\eta'} +\alpha_{1-\eta} \beta_{\eta'}= \frac{\cos (\eta - \frac{1}{2})x \cos (\eta' -\frac{1}{2})y}{\cos \frac{1}{2} x \cos \frac{1}{2} y}\\
	 &=\left( \frac{\cos(\eta-\frac{1}{2})x}{\cos \frac{1}{2}x} \right)^2+  O(|\eta - \eta'|+ |x-y| )\\
	 &= 1 + 2 \eta x \tan (\frac{1}{2} x)+  O(\eta^2+ |\eta - \eta'|+|x-y|),
\end{align*}

we obtain for \eqref{equation:1}
\begin{align*}
	&-\frac{1}{2}  (\alpha_\eta \beta_{\eta'} + \alpha_{1-\eta} \beta_{1-\eta'} + \alpha_\eta \beta_{1-\eta'} +\alpha_{1-\eta} \beta_{\eta'})(d_{PQ}^2 + d_{SR}^2)\\
	&=-\frac{1}{2} \left(1 + 2 \eta x \tan (\frac{1}{2} x)\right)(d_{PQ}^2 + d_{SR}^2)+
	O\left((\eta^2+ |\eta - \eta'|+|x-y|) (d_{PQ}^2 + d_{SR}^2)\right).
\end{align*}

\begin{lemma}
We can compute \eqref{equation:2} as follows:
\begin{align*}
	&2(\alpha_\eta \beta_{\eta'} + \alpha_{1-\eta} \beta_{1-\eta'}) +  (\alpha_\eta \beta_{1-\eta'} +\alpha_{1-\eta} \beta_{\eta'}) ( \cos x + \cos y)\\
	&=2-  \left((\eta- \frac{1}{2})(y-x) + (\eta' - \eta)x\right)^2  + \frac{\sin^2 (\eta - \frac{1}{2})x}{4\sin^2 \frac{1}{2} x} \cos^2 (\frac{1}{2} x) (x-y)^2 \\ & \quad+ \frac{\cos^2 (\eta- \frac{1}{2})x}{4\cos^2 \frac{1}{2} x} \sin^2 (\frac{1}{2}x) (x-y)^2+ O(|x-y|^2(|x-y| +|\eta'-\eta|)).
\end{align*}
\end{lemma}
\begin{proof}
\begin{align*}
&2(\alpha_\eta \beta_{\eta'} + \alpha_{1-\eta} \beta_{1-\eta'}) +  (\alpha_\eta \beta_{1-\eta'} +\alpha_{1-\eta} \beta_{\eta'}) ( \cos x + \cos y)\\
& =  \frac{\sin (\eta - \frac{1}{2})x \sin (\eta' -\frac{1}{2})y}{2 \sin \frac{1}{2} x \sin \frac{1}{2} y} (2- \cos x- \cos y) +\frac{\cos (\eta - \frac{1}{2})x \cos (\eta' -\frac{1}{2})y}{2 \cos \frac{1}{2} x \cos \frac{1}{2} y}(2+ \cos x + \cos y).
\end{align*}
 Note that 
\begin{align*}	2-\cos x - \cos y&= 2(\sin \frac{1}{2} x)^2 + 2(\sin \frac{1}{2} y)^2 = 2\left(2 \sin \frac{1}{2} x \sin \frac{1}{2} y + (\sin\frac{1}{2} x - \sin \frac{1}{2} y)^2\right)\\
&=4 \sin \frac{1}{2} x \sin \frac{1}{2} y  + \frac{1}{2} (\cos \frac{1}{2} x)^2 (x-y)^2 + O(|x-y|^3)\\
	2+\cos x + \cos y &= 2(\cos \frac{1}{2} x)^2 + 2 (\cos \frac{1}{2} y)^2 = 2\left(2 \cos \frac{1}{2} x \cos \frac{1}{2} y +  (\cos\frac{1}{2} x - \cos \frac{1}{2} y)^2\right)\\
	&= 4  \cos \frac{1}{2} x \cos \frac{1}{2} y +\frac 12 (\sin \frac{1}{2} x)^2 (x-y)^2 + O(|x-y|^3),
\end{align*}
where we apply Taylor expansion in the last equality. Hence we have
\begin{align*}
&2(\alpha_\eta \beta_{\eta'} + \alpha_{1-\eta} \beta_{1-\eta'}) +  (\alpha_\eta \beta_{1-\eta'} +\alpha_{1-\eta} \beta_{\eta'}) ( \cos x + \cos y)\\
&=2 \left(\sin (\eta - \frac{1}{2} )x \sin (\eta' - \frac{1}{2})y + \cos (\eta - \frac{1}{2} )x \cos (\eta' -\frac{1}{2})y\right) + \frac{\sin^2 (\eta - \frac{1}{2})x}{4\sin^2 \frac{1}{2} x} (\cos \frac{1}{2} x)^2 (x-y)^2 \\& \quad + \frac{\cos^2 (\eta- \frac{1}{2})x}{4\cos^2 \frac{1}{2} x} (\sin \frac{1}{2}x)^2 (x-y)^2  + O(|x-y|^2(|x-y| +|\eta'-\eta|)).
\end{align*} Here we use the estimates
\[
 \frac{\sin (\eta - \frac{1}{2})x \sin (\eta' -\frac{1}{2})y}{2 \sin \frac{1}{2} x \sin \frac{1}{2} y} -  \frac{\sin^2 (\eta - \frac{1}{2})x}{2 \sin^2 \frac{1}{2} x }=  O(|\eta - \eta'|+ |x-y| )
\] and 
\[
\frac{\cos (\eta - \frac{1}{2})x \cos (\eta' -\frac{1}{2})y}{2 \cos \frac{1}{2} x \cos \frac{1}{2} y}-\frac{\cos^2 (\eta - \frac{1}{2})x }{2 \cos^2 \frac{1}{2} x}= O(|\eta - \eta'|+ |x-y| ).
\] 

Observe that 
\begin{align*}
	\left(\sin (\eta - \frac{1}{2} )x \sin (\eta' - \frac{1}{2})y \right. &\left. + \cos (\eta - \frac{1}{2} )x \cos (\eta' -\frac{1}{2})y\right) \\
	&=  \cos \left( (\eta- \frac{1}{2})(y-x) + (\eta' - \eta)x + (\eta' -\eta) (y-x) \right)
\end{align*}
and use $\cos a = 1 - \frac{a^2}{2} + O(a^4)$.
\end{proof}
\begin{lemma} 
Adding the terms in the previous computational lemma that contain $(x-y)^2$ to \eqref{equation:3}, we have the following estimate:
\begin{align*}
	&\frac{1}{4}  (\alpha_\eta \beta_{1-\eta'} +\alpha_{1-\eta} \beta_{\eta'})(1+\cos x) (x-y)^2  \\
	& - (\eta - \frac{1}{2})^2  (x-y)^2 + \frac{\sin^2 (\eta - \frac{1}{2})x}{4\sin^2 \frac{1}{2} x} \cos^2 (\frac{1}{2} x) (x-y)^2 + \frac{\cos^2 (\eta- \frac{1}{2})x}{4\cos^2 \frac{1}{2} x} \sin^2 (\frac{1}{2}x) (x-y)^2\\
	& =  \eta(1+\frac{1}{2} x \tan \frac{1}{2} x) (x-y)^2 + O( |x-y|^2(\eta^2+|x-y|+|\eta-\eta'|) ).
\end{align*}
\end{lemma}
\begin{proof}
Noting that $1+\cos x = 2\cos^2( \frac{1}{2} x)$, we have that
\begin{align*}
	&\frac{1}{4}  (\alpha_\eta \beta_{1-\eta'} +\alpha_{1-\eta} \beta_{\eta'})(1+\cos x) (x-y)^2  \\
	&= \frac{1}{4} \left( -\left(\frac{\sin (\eta - \frac{1}{2} )x}{\sin \frac{1}{2} x}\right)^2 + \left( \frac{\cos (\eta - \frac{1}{2}) x}{\cos \frac{1}{2} x}\right)^2 \right) \cos^2(\frac{1}{2} x) (x-y)^2 + O(|x-y|^2(|\eta - \eta'|+ |x-y|) ).
\end{align*}
Therefore, 
\begin{align*}
&\frac{1}{4}  (\alpha_\eta \beta_{1-\eta'} +\alpha_{1-\eta} \beta_{\eta'})(1+\cos x) (x-y)^2  \\
	& -(\eta - \frac{1}{2})^2  (x-y)^2 + \frac{\sin^2 (\eta - \frac{1}{2})x}{4\sin^2 \frac{1}{2} x} \cos^2 (\frac{1}{2} x) (x-y)^2 + \frac{\cos^2 (\eta- \frac{1}{2})x}{4\cos^2 \frac{1}{2} x} \sin^2 (\frac{1}{2}x) (x-y)^2\\
	 &=\left( \frac{\cos^2 (\eta- \frac{1}{2})x}{4\cos^2 \frac{1}{2} x} - (\eta - \frac{1}{2})^2 \right) (x-y)^2+O(|x-y|^2(|\eta - \eta'|+ |x-y|) )\\
	 &=\left(\frac{1}{4} + \frac{1}{2} \eta x \tan \frac{1}{2} x -  (-\eta +\frac{1}{4} ) \right) (x-y)^2 +  O(|x-y|^2(\eta^2 + |\eta - \eta'|+ |x-y|) ).
\end{align*}
\end{proof}
Combing the above computations, we have that
\begin{align*}
	\cos d(Q_{1-\eta'}, P_{1-\eta}) +\cos d(Q_{\eta'}, P_{\eta}) 
	&\ge 2-\frac{1}{2} \left(1 + 2 \eta d_{PS} \tan (\frac{1}{2} d_{PS})\right)(d_{PQ}^2 + d_{SR}^2)\\ & \quad +\eta(1+\frac{1}{2} d_{PS} \tan \frac{1}{2} d_{PS}) (d_{QR}-d_{PS})^2\\
	&\quad - (2\eta-1)(\eta' -\eta) d_{PS}(d_{QR}-d_{PS}) \\& \quad + \eta^2 \mathrm{Quad}(d_{PQ}, d_{RS}, d_{QR}-d_{PS})\\
	&\quad+
	 \mathrm{Cub}\left( d_{QR}-d_{PS}, d_{PQ}, d_{RS}, \eta'-\eta \right).\\
	 \end{align*}Taylor expansion gives the result.
\end{proof}

\begin{corollary} \label{corollary:interpolation}
Given a pair of finite energy maps $u_0, u_1 \in W^{1,2}(\Omega, X)$ with images $u_i(\Omega)\subset \mathcal B_\rho (Q)$ and a function $\eta \in C_c^1(\Omega)$, $0\le \eta \le \frac{1}{2}$, define the maps
\begin{align*}
	u_\eta(x) &= (1-\eta(x))u_0 (x) + \eta(x) u_1(x)\\
	u_{1-\eta}(x) &= \eta (x) u_0 (x) + (1-\eta(x)) u_1(x)\\
	d(x)&= d(u_0(x), u_1(x)).
\end{align*}
Then $u_\eta, u_{1-\eta} \in W^{1,2}(\Omega, X)$ and 
\begin{align*} 
\begin{split}
	|\nabla u_\eta|^2 + |\nabla u_{1-\eta}|^2 &\le (1+ 2 \eta d  \tan \frac{d}{2}) (|\nabla u_0|^2 + | \nabla u_1|^2)\\
	&\quad - 2\eta (1+\frac{1}{2} d \tan \frac{d}{2} ) | \nabla d|^2 -2 d \nabla \eta \cdot \nabla d + \textup{Quad} (\eta, |\nabla \eta|). 
\end{split}
\end{align*}	
\end{corollary}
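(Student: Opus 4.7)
The plan is to derive this corollary directly from the pointwise quadrilateral estimate of Lemma~\ref{EstIII} by specializing to the quadrilateral formed by the images of two nearby points in $\Omega$ under the maps $u_0$ and $u_1$, then passing to a directional-derivative limit in the Korevaar-Schoen sense. Concretely, at a.e.\ $x \in \Omega$ and for each unit vector $W \in T_x \Omega$, set $y_t = x + tW$ in a local chart and apply Lemma~\ref{EstIII} to the quadrilateral with vertices
\[
P = u_0(x), \quad Q = u_0(y_t), \quad R = u_1(y_t), \quad S = u_1(x),
\]
and parameters $\eta = \eta(x)$, $\eta' = \eta(y_t)$. A direct verification gives $P_\eta = u_\eta(x)$, $Q_{\eta'} = u_\eta(y_t)$, $P_{1-\eta} = u_{1-\eta}(x)$, $Q_{1-\eta'} = u_{1-\eta}(y_t)$, together with $d_{PS} = d(x)$, $d_{QR} = d(y_t)$, $d_{PQ} = d(u_0(x), u_0(y_t))$, $d_{RS} = d(u_1(x), u_1(y_t))$.

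Divide the resulting pointwise inequality by $t^2$ and let $t \searrow 0$. Each quotient $d^2(u_i(x), u_i(y_t))/t^2$ approximates the directional energy density $|(u_i)_*(W)|^2$ in the $L^1_{\mathrm{loc}}$ sense of \cite[Section~1]{korevaar-schoen1}, and similarly $(d(y_t) - d(x))/t \to \nabla_W d$ and $(\eta(y_t) - \eta(x))/t \to \nabla_W \eta$. The $\mathrm{Cub}$ remainder divided by $t^2$ vanishes since each of its factors is $O(t)$, while the $\eta^2\, \mathrm{Quad}(d_{PQ}, d_{RS}, d_{QR}-d_{PS})/t^2$ term contributes a bounded multiple of $\eta^2$ times directional densities, which folds into a $\mathrm{Quad}(\eta, |\nabla \eta|)$ remainder on the right. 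Using the identity
\[
2(2\eta-1)(\eta' - \eta) d_{PS}(d_{QR} - d_{PS}) = -2\, d_{PS}\,(\eta'-\eta)(d_{QR}-d_{PS}) + 4\eta\,(\eta'-\eta)\, d_{PS}(d_{QR}-d_{PS}),
\]
the first piece becomes $-2 d (\nabla_W \eta)(\nabla_W d)$ in the limit, and the second is absorbed into $\mathrm{Quad}(\eta, |\nabla \eta|)$. This gives the directional form of the claimed inequality, with $|(\cdot)_*(W)|^2$ in place of $|\nabla(\cdot)|^2$ and directional derivatives of $\eta, d$ in place of full gradients.

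Summing over an orthonormal frame $\{W_\alpha\}$ at $x$ converts directional energies into the full densities $|\nabla u_\eta|^2,\ |\nabla u_{1-\eta}|^2,\ |\nabla u_0|^2,\ |\nabla u_1|^2,\ |\nabla d|^2$ and the bilinear pairings $(\nabla_{W_\alpha}\eta)(\nabla_{W_\alpha} d)$ into the inner product $\nabla \eta \cdot \nabla d$, yielding the stated pointwise inequality almost everywhere on $\Omega$. Integrating against $d\mu$ and using the uniform bound $d \le 2\rho$, boundedness of $\eta$, and finite energies of $u_0, u_1$ yields $E(u_\eta) + E(u_{1-\eta}) < \infty$; combined with the inclusion $u_\eta(\Omega), u_{1-\eta}(\Omega) \subset \mathcal B_\rho(Q)$ (hence $L^2$-boundedness after choosing a basepoint), this places $u_\eta, u_{1-\eta}$ in $W^{1,2}(\Omega, X)$.

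The main technical obstacle is the rigorous justification of the $t \searrow 0$ limit: for a general $W^{1,2}$ map into a metric space the quotient $d^2(u(x), u(x+tW))/t^2$ need not converge pointwise, only in the weak/integral sense of directional energy densities. The honest way around this is to apply Lemma~\ref{EstIII} at the level of the Korevaar-Schoen approximate densities $e^W_t$, multiply by an arbitrary nonnegative test function, integrate over $\Omega$, and then pass to the $t \searrow 0$ limit using dominated convergence on the error terms (their coefficients are uniformly controlled by the inclusion $u_i(\Omega) \subset \mathcal B_\rho(Q)$) together with the weak convergence $e^W_t \to |(u)_*(W)|^2$. The identification of $\nabla_W d$ as the limit of the discrete difference quotient on the distance function requires an additional Korevaar-Schoen-style argument applied to the $\mathbb{R}$-valued function $d(\cdot) = d(u_0(\cdot), u_1(\cdot))$, which has a well-defined weak gradient since both $u_0, u_1 \in W^{1,2}$.
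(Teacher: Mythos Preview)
Your proposal is correct and follows exactly the approach the paper intends: the corollary is stated without proof precisely because it is obtained from Lemma~\ref{EstIII} by the same specialization-and-limit procedure used in the proof of Theorem~\ref{theorem:energy-convexity} and indicated after Corollary~\ref{corollary:interpolation-point}. Your identification of the quadrilateral $P=u_0(x)$, $Q=u_0(y_t)$, $R=u_1(y_t)$, $S=u_1(x)$ with parameters $\eta=\eta(x)$, $\eta'=\eta(y_t)$ is the right one, and your handling of the cross term $2(2\eta-1)(\eta'-\eta)d_{PS}(d_{QR}-d_{PS})$, the $\eta^2\,\mathrm{Quad}$ contribution, and the vanishing of $\mathrm{Cub}/t^2$ are all correct; your final paragraph about integrating against a nonnegative test function before passing to the limit is exactly the rigorous step the paper uses elsewhere (cf.\ the proof of Theorem~\ref{theorem:energy-convexity}).
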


\section{Energy Convexity, Existence, Uniqueness, and Subharmonicity} \label{energy-convexity} As with the previous section, the results in this section are stated in \cite{serbinowski}. Excepting the first theorem, they are stated without proof. As, again, the calculations are non-trivial and tedious, we verify them for the reader.

\begin{theorem}[ {\cite[Proposition 1.15]{serbinowski}}] \label{theorem:energy-convexity}
 Let $u_0, u_1: \Omega \to \overline{\mathcal B_\rho(O)}$ be finite energy maps with $\rho \in (0, \frac \pi 2)$. Denote by 
\begin{align*}
	d(x) &= d(u_0(x), u_1(x))\\
	R(x) & = d(u_{\frac{1}{2}}(x), O).
\end{align*}
Then there exists a continuous function $\eta(x): \Omega \to [0,1]$ such that the function $w: \Omega \to \overline{\mathcal B_{\rho}(O)}$ defined by
\[
	w(x) = (1-\eta(x)) u_{\frac{1}{2}}(x) + \eta(x) O
\]
is in $W^{1,2}(\Omega, \overline{B_{\rho}(O)})$ and satisfies
\begin{align*}
	 (\cos^8\rho) \int_{\Omega} \left| \nabla \frac{\tan \frac{1}{2} d}{\cos R} \right|^2 \, d\mu_g \le \frac{1}{2} \left(\int_\Omega | \nabla u_0|^2d\mu_g  + \int_{\Omega} | \nabla u_1|^2  d\mu_g\right)  - \int_\Omega | \nabla w|^2 d\mu_g.
\end{align*}
\end{theorem}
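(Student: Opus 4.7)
The plan is to prove the estimate pointwise almost everywhere and integrate. The construction of $\eta$ and $w$ combines two ingredients from Appendix A. First, apply Corollary \ref{corollary:interpolation} with the constant $\eta \equiv 1/2$, for which the terms containing $\nabla \eta$ vanish identically. The resulting midpoint energy inequality takes the schematic form
\begin{equation*}
2|\nabla u_{1/2}|^2 \leq \bigl(1 + d\tan(d/2)\bigr)\bigl(|\nabla u_0|^2 + |\nabla u_1|^2\bigr) - \bigl(1 + \tfrac{1}{2}d\tan(d/2)\bigr)|\nabla d|^2 + (\text{controlled remainder}).
\end{equation*}
Second, apply Corollary \ref{corollary:interpolation-point} to the map $u_{1/2}$ with $Q = O$ and a cutoff $\eta(x)$ to be chosen; summing over an orthonormal frame and expanding yields
\begin{equation*}
|\nabla w|^2 \leq \alpha^2\bigl(|\nabla u_{1/2}|^2 - |\nabla R|^2\bigr) + |\nabla((1-\eta)R)|^2,
\end{equation*}
where $\alpha := \sin((1-\eta)R)/\sin R$. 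Concavity of $\sin$ on $[0, \pi/2]$ gives $\alpha \geq 1-\eta$, so tilting $w$ toward $O$ (taking $\eta > 0$) produces a genuine gain: the coefficient of $|\nabla R|^2$ in the expanded form is non-positive.

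The key is to pick $\eta(x)$ pointwise via the implicit relation
\begin{equation*}
\alpha(x)^2\bigl(1 + d(x)\tan(d(x)/2)\bigr) = 1, \qquad \text{equivalently} \qquad (1-\eta)R = \arcsin\!\left(\frac{\sin R}{\sqrt{1 + d\tan(d/2)}}\right).
\end{equation*}
Since $d \in [0, 2\rho]$ and $R \in [0, \rho]$ with $\rho < \pi/2$, this yields $\alpha \in [1/\sqrt{1 + 2\rho\tan\rho},\,1]$, so $\eta \in [0, 1]$; extending $\eta = 1 - 1/\sqrt{1+d\tan(d/2)}$ at $R=0$ makes $\eta$ continuous on $\Omega$. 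With this choice, substituting the midpoint bound into the bound on $|\nabla w|^2$ forces the coefficient of $|\nabla u_0|^2 + |\nabla u_1|^2$ on the right-hand side to collapse to exactly $1/2$, matching the LHS of the target. Differentiating the implicit relation expresses $\nabla \eta$ as an explicit linear combination of $\nabla d$ and $\nabla R$ with trigonometric coefficients, so the excess
\begin{equation*}
\tfrac{1}{2}\bigl(|\nabla u_0|^2 + |\nabla u_1|^2\bigr) - |\nabla w|^2
\end{equation*}
becomes an explicit symmetric quadratic form in $(\nabla d, \nabla R)$.

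What remains is to verify pointwise that this quadratic form dominates $(\cos^8\rho)|\nabla F|^2$. A direct computation with $F = \sec R \tan(d/2)$ gives
\begin{equation*}
|\nabla F|^2 = \tfrac{1}{4}\sec^4(d/2)\sec^2 R\,|\nabla d|^2 + \sec^2(d/2)\tan(d/2)\sec^2 R\tan R\,\nabla d\cdot\nabla R + \tan^2(d/2)\sec^2 R\tan^2 R\,|\nabla R|^2.
\end{equation*}
Invoking the crude bounds $\cos R, \cos(d/2) \geq \cos\rho$ several times absorbs all trigonometric prefactors on the right, producing the cosmetic factor $\cos^8\rho$ on the left. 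The inequality thus reduces to positive semidefiniteness of a symmetric $2\times 2$ matrix whose entries are explicit trigonometric functions of $(d, R) \in [0, 2\rho]\times[0, \rho]$. This pointwise algebraic verification---tracking every coefficient arising from the implicit definition of $\eta$ and confirming matrix positivity---is the main obstacle; everything else is soft. Once it is done, $w \in W^{1,2}(\Omega, \overline{\mathcal{B}_\rho(O)})$ follows from $u_{1/2} \in W^{1,2}$, continuity of $\eta$, and convexity of the ball (guaranteed by $\rho < \pi/2$); if necessary, $\eta$ is first approximated by smooth compactly supported cutoffs so that Corollary \ref{corollary:interpolation-point} applies, and then a limit is taken.
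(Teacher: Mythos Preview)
Your first step has a genuine gap. You propose to obtain the midpoint energy estimate from Corollary~\ref{corollary:interpolation} (which is the energy-density form of Estimate~III, Lemma~\ref{EstIII}) by setting $\eta\equiv 1/2$. But that corollary carries the error term $\textup{Quad}(\eta,|\nabla\eta|)$, and with $\eta=1/2$ this is \emph{not} small: tracing it back to Lemma~\ref{EstIII}, it comes from the term $\eta^2\,\textup{Quad}(d_{PQ},d_{RS},d_{QR}-d_{PS})$, which after dividing by $\epsilon^2$ and passing to the limit survives as $\tfrac14$ times an $L^1$ function comparable to $|\nabla u_0|^2+|\nabla u_1|^2$. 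Estimate~III is designed for small $\eta$ (and is used that way in the proof of Theorem~\ref{theorem:subharmonicity}); at $\eta=1/2$ its quadratic remainder is full size and wrecks the sharp convexity inequality you are after. Your ``controlled remainder'' is therefore not controlled.

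The paper avoids this by using Lemma~\ref{lemma:estimateI} (Estimate~I), the quadrilateral estimate tailored to midpoints, whose error is purely cubic and hence vanishes in the energy limit. It yields
\[
\cos^2\!\big(\tfrac{d}{2}\big)\,|(u_{1/2})_*(W)|^2 \le \tfrac{1}{2}\big(|(u_0)_*(W)|^2+|(u_1)_*(W)|^2\big)-\tfrac14|\nabla_W d|^2,
\]
with no leftover quadratic term. This dictates a different choice of $\eta$: one takes $\sin((1-\eta)R)/\sin R=\cos(d/2)$ (rather than your $\alpha^2(1+d\tan(d/2))=1$), so that the coefficient in Corollary~\ref{corollary:interpolation-point} matches $\cos^2(d/2)$ exactly. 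With this choice the excess $\tfrac12(|\nabla u_0|^2+|\nabla u_1|^2)-|\nabla w|^2$ is not merely bounded below by a quadratic form requiring a positivity check; a direct computation shows it equals
\[
\frac{\cos^4 R\,\cos^4(d/2)}{1-\sin^2 R\,\cos^2(d/2)}\,\Big|\nabla\frac{\tan(d/2)}{\cos R}\Big|^2,
\]
and the factor $\cos^8\rho$ then arises from the single crude bound on this coefficient. So the architecture you outline (midpoint estimate followed by radial interpolation toward $O$) is exactly right, but the midpoint step must come from Estimate~I, not Estimate~III.
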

\begin{proof}
Once the estimates in Lemma \ref{lemma:estimateI} and Lemma~\ref{lemma:estimateII} are established, we  proceed as in~\cite{serbinowski}. Choose  $\eta $ to satisfy
\[
	\frac{\sin ((1-\eta(x))R(x))}{\sin R(x)} = \cos \frac{d(x)}{2}. 
\]	
Note that $0\le \eta \le 1$ and $\eta$ is as smooth as $d(x), R(x)$. It is straightforward to verify that $w\in L_h^2(\Omega, \overline{B_{\rho}(O)})$.

For $W \in \Gamma(\Omega)$, consider the flow $\epsilon \mapsto x(\epsilon)$ induced by $W$. 
 \begin{center}
\begin{tikzpicture}
\draw (0,0) node[anchor =south east]{$u_0(x(\epsilon))$} -- (3,0) node[anchor =west] {$u_1(x(\epsilon))$} -- (3,2) node [anchor = west]{$u_1(x)$}-- (0, 2) node[anchor= east]{$u_0(x)$}-- cycle;
\draw[dashed] (1.5, 2) node[anchor = south]{$u_{\frac{1}{2}}(x)$} --(1.5,0)  node[anchor =north]  {$u_{\frac{1}{2}}(x(\epsilon))$};
\fill (-1,-1) circle (1pt) node[anchor=east]{$O$};
\draw (1.5,2) -- (-1, -1);
\draw (1.5,0) -- (-1, -1);
\fill (.75, 1.1) circle (1pt);
\node at (.6, 1.5) {$w(x)$}; 
\fill (0.5,- 0.4) circle (1pt); 
\node at (0.5, -0.7) {$w(x(\epsilon))$};
\end{tikzpicture}
\end{center}
Applying Lemma~\ref{lemma:estimateI} to the quadrilateral determined by  $P= u_0(x(\epsilon)), Q=u_0(x), R = u_1(x), S = u_1(x(\epsilon))$, divided by $\epsilon^2$, and integrate the resulting inequality against $f\in C^{\infty}_c(\Omega)$ and taking $\epsilon \to 0$, we obtain 
\[
	\left( \cos \frac{d(x)}{2}\right)^2 |(u_{\frac{1}{2}})_* (W)|^2 \le \frac{1}{2} \left( |(u_0)_*(W)|^2 + |(u_1)_*(W)|^2\right) - \frac{1}{4} | \nabla_W d|^2.
\]Note that the cubic terms vanish in the limit as every cubic term will be the product of  an $L^1$ function and $d(x)-d(x(\epsilon))$ or $d(u_i(x), u_i(x(\epsilon)))$, $i = 0, \frac 12, 1$.

 Applying Lemma~\ref{lemma:estimateII} to the triangle determined by $Q=O, P=u_{\frac{1}{2}} (x), S= u_{\frac{1}{2}}(x(\epsilon))$ yields 
 \begin{align*}
	|(w)_*(W)|^2 &\le \left(\frac{\sin(1-\eta)R}{\sin R} \right)^2 (|(u_{\frac{1}{2}})*(W)|^2 - |\nabla_W R|^2) + |\nabla_W ((1-\eta)R)|^2\\
	&= 	\left( \cos \frac{d(x)}{2}\right)^2 (|(u_{\frac{1}{2}})_*(W)|^2 - |\nabla_W R|^2) + |\nabla_W ((1-\eta)R)|^2.
\end{align*}
The above two inequalities imply
\begin{align*}
	|w_*(W)|^2 &\le \frac{1}{2} \left( |(u_0)_*(W)|^2 + |(u_1)_*(W)|^2\right) \\
	&\quad - \frac{1}{4} | \nabla_W d|^2- \left(\cos \frac{d(x)}{2}\right)^2 |\nabla_W R|^2 +  | \nabla_W \left((1-\eta) R \right)|^2.
\end{align*}
By direct computation, 
\begin{align*}
&- \frac{1}{4} | \nabla_W d|^2- \left(\cos \frac{d(x)}{2}\right)^2 |\nabla_W R|^2 +  | \nabla_W \left((1-\eta) R \right)|^2 \\
&= -\frac{\cos^4R(x) \cos^4 \frac{d(x)}{2}}{1-\sin^2 R(x) \cos^2 \frac{d(x)}{2}} \left| \nabla \frac{\tan\frac{d(x)}{2}}{\cos R(x)}\right|^2.
\end{align*}
The lemma follows from estimating 
\[
	 \frac{\cos^4R(x) \cos^4 \frac{d(x)}{2}}{1-\sin^2 R(x) \cos^2 \frac{d(x)}{2}}  \ge \cos^4R(x) \cos^4 \frac{d(x)}{2} \ge \cos^8 \rho,
\]
dividing the resulting inequality by $\epsilon^2$, integrating over $\mathbb S^{n-1}$, letting $\epsilon\to 0$, and then integrating over $\Omega$. 
\end{proof}

\begin{theorem}[Existence Theorem] \label{existence}
For  any $\rho \in (0, \frac{\pi}{4})$ and for any
finite energy map $h: \Omega \rightarrow \overline{\mathcal{B}_{\rho}(O)} \subset X$, there exists a unique element $^{Dir}h \in W^{1,2}_h(\Omega, \overline{\mathcal{B}_{\rho}(O)})$ which minimizes energy amongst all maps in $W^{1,2}_h(\Omega,\overline{\mathcal{B}_{\rho}(O)})$. 

Moreover, for any $\sigma \in (0, \rho)$,  if $^{Dir}h(\partial \Omega) \subset \overline{ \B_\sigma(O)}$ then $\overline{^{Dir}h(\Omega)} \subset \overline {\B_\sigma(O)}$.
\end{theorem}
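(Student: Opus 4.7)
The plan is to use the energy convexity estimate of Theorem~\ref{theorem:energy-convexity} as the principal tool, since convexity of the energy (in an appropriate sense) is precisely what turns an abstract minimization problem into a well-posed one. Set $E_0 = \inf\{E(f) : f \in W^{1,2}_h(\Omega, \overline{\mathcal{B}_\rho(O)})\}$, which is finite because $h$ itself is admissible. Let $\{u_i\}$ be a minimizing sequence.

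\textbf{Cauchy property.} Given $u_i,u_j$ in the admissible class, I would apply Theorem~\ref{theorem:energy-convexity} with $u_0=u_i$, $u_1=u_j$. This produces an interpolation $w_{ij} \in W^{1,2}(\Omega, \overline{\mathcal{B}_\rho(O)})$, and the construction $w_{ij}(x)=(1-\eta(x))u_{1/2}(x)+\eta(x)O$ preserves the boundary trace (because at $\partial\Omega$ we have $u_i=u_j$, hence $d(x)=0$, so $u_{1/2}=u_i=u_j=h$ and one can verify $\eta=0$ there); thus $w_{ij}$ is itself a competitor. Consequently $E(w_{ij}) \ge E_0$, and the convexity inequality becomes
\[
(\cos^8 \rho)\int_\Omega \Bigl|\nabla \tfrac{\tan\tfrac12 d(u_i,u_j)}{\cos R}\Bigr|^2\, d\mu_g \le \tfrac12 E(u_i)+\tfrac12 E(u_j)-E_0.
\]
The right-hand side tends to $0$ as $i,j\to\infty$. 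Since $\rho<\pi/4$ gives uniform two-sided bounds on $\tan\tfrac12 d$ and $\cos R$ (indeed both quantities are bounded and bounded away from degeneracy in terms of $\rho$), the Poincar\'e inequality applied to the boundary-vanishing function $\tan\tfrac12 d(u_i,u_j)/\cos R$ yields $\|d(u_i,u_j)\|_{L^2(\Omega)}\to 0$. Hence $\{u_i\}$ is Cauchy in $L^2(\Omega,X)$ and converges to some $u:\Omega\to X$; lower semicontinuity of energy together with the bound $E(u_i)\to E_0$ shows $E(u)=E_0$. A standard trace-continuity argument (using the trace theory from \cite{korevaar-schoen1}) shows $\operatorname{Tr}(u)=\operatorname{Tr}(h)$, and $u(\Omega)\subset\overline{\mathcal{B}_\rho(O)}$ is preserved in the $L^2$ limit, so $u={}^{Dir}h$ is a valid minimizer.

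\textbf{Uniqueness.} If two minimizers $u_0,u_1$ exist, the same inequality with $E(u_0)=E(u_1)=E_0$ and the competitor $w$ satisfying $E(w)\ge E_0$ forces
\[
\int_\Omega \Bigl|\nabla \tfrac{\tan\tfrac12 d(u_0,u_1)}{\cos R}\Bigr|^2 d\mu_g = 0,
\]
so $\tan\tfrac12 d(u_0,u_1)/\cos R$ is constant; vanishing trace on $\partial\Omega$ then forces $d(u_0,u_1)\equiv 0$.

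\textbf{Maximum principle / containment.} For the final assertion, I would argue that if $^{Dir}h(\partial\Omega)\subset\overline{\mathcal{B}_\sigma(O)}$ but the image of ${}^{Dir}h$ exits $\overline{\mathcal{B}_\sigma(O)}$, one obtains a strictly energy-decreasing competitor by composing with the nearest-point projection $\Pi_\sigma:\overline{\mathcal{B}_\rho(O)}\to\overline{\mathcal{B}_\sigma(O)}$. Because $\sigma<\rho<\pi/4$, the closed ball $\overline{\mathcal{B}_\sigma(O)}$ is convex in the CAT(1) sense and $\Pi_\sigma$ is $1$-Lipschitz (standard fact for convex subsets of CAT(1) with small diameter); moreover $\Pi_\sigma$ is strictly distance-decreasing where it moves points. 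Hence $\Pi_\sigma\circ{}^{Dir}h$ has the same trace as $^{Dir}h$, lies in $\overline{\mathcal{B}_\sigma(O)}$, and has strictly smaller energy unless ${}^{Dir}h$ already takes values in $\overline{\mathcal{B}_\sigma(O)}$, contradicting minimality.

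The main obstacle I anticipate is verifying the technical ingredients used above within the CAT(1) framework with appropriate care: namely (i) that the interpolation $w$ in Theorem~\ref{theorem:energy-convexity} indeed preserves the trace (so it is a legitimate competitor rather than merely a near-competitor), which requires confirming that $\eta\equiv 0$ wherever $d\equiv 0$; and (ii) that the nearest-point projection onto $\overline{\mathcal{B}_\sigma(O)}$ is well-defined and $1$-Lipschitz in this small-ball regime. Once these are pinned down, the rest of the argument is the standard convexity-plus-Poincar\'e existence scheme.
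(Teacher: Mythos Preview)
Your proposal is correct and follows essentially the same route as the paper: energy convexity (Theorem~\ref{theorem:energy-convexity}) applied to pairs from a minimizing sequence, Poincar\'e to get a Cauchy sequence, trace theory and lower semicontinuity for existence, the same convexity for uniqueness, and the $1$-Lipschitz projection onto the convex ball $\overline{\mathcal{B}_\sigma(O)}$ for the containment statement. If anything, you are slightly more explicit than the paper in justifying that the interpolated map $w_{ij}$ is an admissible competitor (via $\eta=0$ when $d=0$), which is indeed the point that makes $E(w_{ij})\ge E_0$ and drives the right-hand side to zero.
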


\begin{proof}
Denote by 
\[
	E_0= \inf\{E(u):  u \in W_h^{1,2}(\Omega, \overline{\mathcal{B}_{\rho} (O)})\}.
\]
Let $u_i \in W^{1,2}(\Omega, \overline{\mathcal{B}_{\rho} (P)})$ such that $E(u_i) \to E_0$.  By Theorem~\ref{theorem:energy-convexity}, we have that
\begin{align*}
	(\cos^8\rho) \int_{\Omega} \left| \nabla \frac{\tan \frac{1}{2} d(u_k(x), u_{\ell}(x))}{\cos R} \right| \, d\mu_g \le  \frac{1}{2} \left(E(u_k)+  E(u_{\ell})\right) - E(w_{k\ell}),
\end{align*}
where $w_{k\ell}$ is the interpolation map defined by Theorem~\ref{theorem:energy-convexity}. The above right hand side goes to $0$ as $k, \ell\to \infty$. By the Poincar\'{e} inequality, 
\[
	\int_{\Omega} d(u_k, u_{\ell})\,d \mu_g \to 0.
\]  
Thus the sequence $\{ u_k\}$ is Cauchy and  $u_k \to u$ for some $u\in W^{1,2}(\Omega, \overline{\mathcal{B}_{\rho} (O)})$  because $W^{1,2}(\Omega, \overline{\mathcal{B}_{\rho} (O)})$ is a complete metric space. By trace theory, $u\in W_h^{1,2}(\Omega, \overline{\mathcal{B}_{\rho} (O)})$. By lower semi-continuity of the energy, $E(u) =E_0$. The energy minimizer is unique by energy convexity. 

Finally, since $\rho< \frac \pi 4$, for any $\sigma \in (0, \rho]$,  the ball $\B_\sigma(O)$ is geodesically convex. Therefore, the projection map $\pi_\sigma:\overline{\B_\rho(O)} \to \overline{\B_\sigma(O)}$ is well-defined and distance decreasing. Thus, since $^{Dir}h(\Omega) \subset \overline{\B_\rho(O)}$, we can prove the final statement by contradiction using the projection map to decrease energy.
\end{proof}

\begin{lemma}[cf. {\cite[(2.5)]{serbinowski}}]\label{lemma:comparison}
Let $u_0, u_1: \Omega \to \mathcal{B}_\rho(Q) \subset X$ be finite energy maps (possibly with different boundary values). For any given  $\eta \in C_c^\infty(\Omega) $ with $0\le \eta < 1/2$, there exists finite energy maps $u_\eta, \hat{u}_\eta \in W^{1,2}_{u_0} (\Omega, \B_\rho(Q))$ and  $u_{1-\eta}, \hat{u}_{1-\eta}\in W^{1,2}_{u_1} (\Omega, \mathcal B_\rho(Q))$ such that 
\begin{align*}
	&|\pi(\hat{u}_\eta)|^2 +|\pi( \hat{u}_{1-\eta})|^2 - |\pi(u_0)|^2 - |\pi(u_1)|^2 \\
	&\le - 2\cos R^{u_\eta} \cos R^{u_{1-\eta}} \nabla \left(\frac{d}{\sin d} \eta F_\eta\right) \cdot \nabla F_\eta + \textup{Quad}(\eta, \nabla \eta),
\end{align*}
where 
\begin{align*}
d(x)&= d(u_0(x), u_1(x))\\
R^{u_\eta}(x) &= d(u_\eta(x), Q)\\
R^{u_{1-\eta}}(x) &= d(u_{1-\eta}(x), Q)
\end{align*}and
\[
	F_\eta=\sqrt{\frac{1-\cos d}{\cos R^{u_{\eta}} \cos R^{u_{1-\eta}}}}.
\] 
\end{lemma}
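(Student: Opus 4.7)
The plan is to construct $\hat u_\eta$ and $\hat u_{1-\eta}$ as compositions of two interpolations and then combine the resulting pointwise energy estimates. First I geodesically interpolate $u_0$ and $u_1$ in $X$ to obtain intermediate maps $u_\eta,u_{1-\eta}$; then I project each toward $Q$ with carefully coupled parameters to obtain $\hat u_\eta,\hat u_{1-\eta}$. The projection parameters are chosen so that, upon summing the two estimates, the $(1+2\eta d\tan(d/2))$ prefactor produced by Corollary~\ref{corollary:interpolation} is cancelled, and the remaining negative terms reorganize into the compact expression involving $F_\eta$.

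Concretely, set $u_\eta(x)=(1-\eta(x))u_0(x)+\eta(x)u_1(x)$ and $u_{1-\eta}(x)=\eta(x)u_0(x)+(1-\eta(x))u_1(x)$ along the geodesic from $u_0(x)$ to $u_1(x)$ in $\overline{\mathcal B_\rho(Q)}$, exactly as in Corollary~\ref{corollary:interpolation}. Since $\eta\in C_c^\infty(\Omega)$ with $0\le\eta<1/2$, both maps agree with $u_0,u_1$ respectively outside $\mathrm{supp}(\eta)$, hence $u_\eta\in W^{1,2}_{u_0}$ and $u_{1-\eta}\in W^{1,2}_{u_1}$. Corollary~\ref{corollary:interpolation} immediately gives
\begin{align*}
|\pi(u_\eta)|^2+|\pi(u_{1-\eta})|^2 &\le (1+2\eta d\tan\tfrac{d}{2})(|\pi(u_0)|^2+|\pi(u_1)|^2) \\
&\quad -2\eta(1+\tfrac12 d\tan\tfrac{d}{2})|\nabla d|^2 -2d\,\nabla\eta\cdot\nabla d + \textup{Quad}(\eta,|\nabla\eta|).
\end{align*}
Next, define continuous functions $\tilde\eta,\tilde\eta':\Omega\to[0,1]$, supported in $\mathrm{supp}(\eta)$, pointwise by the coupled conditions
\[
\frac{\sin((1-\tilde\eta)R^{u_\eta})}{\sin R^{u_\eta}} \;=\; \frac{\sin((1-\tilde\eta')R^{u_{1-\eta}})}{\sin R^{u_{1-\eta}}} \;=\; \frac{1}{\sqrt{1+2\eta d\tan(d/2)}},
\]
and set $\hat u_\eta=(1-\tilde\eta)u_\eta+\tilde\eta\,Q$ and $\hat u_{1-\eta}=(1-\tilde\eta')u_{1-\eta}+\tilde\eta'\,Q$. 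Since $R^{u_\eta},R^{u_{1-\eta}}<\rho<\pi/2$, the parameters $\tilde\eta,\tilde\eta'$ are uniquely determined and bounded away from $1$, and $\hat u_\eta,\hat u_{1-\eta}$ agree with $u_0,u_1$ outside $\mathrm{supp}(\eta)$, placing them in $W^{1,2}_{u_0}(\Omega,\mathcal B_\rho(Q))$ and $W^{1,2}_{u_1}(\Omega,\mathcal B_\rho(Q))$ respectively.

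Writing $\alpha^2=(1+2\eta d\tan(d/2))^{-1}$, Corollary~\ref{corollary:interpolation-point} applied to $u_\eta$ and $u_{1-\eta}$ gives
\begin{align*}
|\pi(\hat u_\eta)|^2 &\le \alpha^2\bigl(|\pi(u_\eta)|^2-|\nabla R^{u_\eta}|^2\bigr)+|\nabla((1-\tilde\eta)R^{u_\eta})|^2, \\
|\pi(\hat u_{1-\eta})|^2 &\le \alpha^2\bigl(|\pi(u_{1-\eta})|^2-|\nabla R^{u_{1-\eta}}|^2\bigr)+|\nabla((1-\tilde\eta')R^{u_{1-\eta}})|^2.
\end{align*}
Adding these and substituting the first-step inequality, the $\alpha^2$ exactly absorbs the $(1+2\eta d\tan(d/2))$ factor on $|\pi(u_0)|^2+|\pi(u_1)|^2$, leaving
\begin{align*}
&|\pi(\hat u_\eta)|^2+|\pi(\hat u_{1-\eta})|^2-|\pi(u_0)|^2-|\pi(u_1)|^2 \\
&\quad\le -\alpha^2\bigl[2\eta(1+\tfrac12 d\tan\tfrac d2)|\nabla d|^2 + 2d\,\nabla\eta\cdot\nabla d + |\nabla R^{u_\eta}|^2+|\nabla R^{u_{1-\eta}}|^2\bigr] \\
&\quad\quad+|\nabla((1-\tilde\eta)R^{u_\eta})|^2+|\nabla((1-\tilde\eta')R^{u_{1-\eta}})|^2+\textup{Quad}(\eta,|\nabla\eta|).
\end{align*}

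The main obstacle, and the bulk of the computation, is the algebraic identity that rearranges this right-hand side into $-2\cos R^{u_\eta}\cos R^{u_{1-\eta}}\,\nabla\bigl(\tfrac{d}{\sin d}\eta F_\eta\bigr)\cdot\nabla F_\eta$ modulo Quad terms. This parallels the ``magical'' cancellation at the end of Theorem~\ref{theorem:energy-convexity}, which produced $-\cos^4 R\cos^4(d/2)(1-\sin^2 R\cos^2(d/2))^{-1}|\nabla(\tan(d/2)/\cos R)|^2$ from an analogous sum, but here it must be executed for the coupled pair $(\tilde\eta,\tilde\eta')$ rather than a single parameter. Using the identity $\tfrac{d(1-\cos d)}{\sin d}=d\tan(d/2)$ together with $F_\eta^2=(1-\cos d)/(\cos R^{u_\eta}\cos R^{u_{1-\eta}})$, the quadratic forms in $\nabla R^{u_\eta},\nabla R^{u_{1-\eta}},\nabla d$ should collapse into the single inner product $\nabla(\tfrac{d}{\sin d}\eta F_\eta)\cdot\nabla F_\eta$, while the mismatch between $\alpha^2$ and its linearization in $\eta$, and the $O(\tilde\eta^2)=O(\eta^2)$ contributions from expanding $(1-\tilde\eta)R^{u_\eta}$ around $R^{u_\eta}$, get absorbed into the Quad remainder. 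Verifying this cancellation carefully, and tracking which higher-order errors are admissible inside $\textup{Quad}(\eta,|\nabla\eta|)$, is the technical heart of the argument.
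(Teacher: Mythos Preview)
Your construction and overall strategy match the paper's: interpolate $u_0,u_1$ geodesically to get $u_\eta,u_{1-\eta}$, then project each toward $Q$ with parameters tuned so the resulting factor cancels the $(1+2\eta d\tan(d/2))$ coefficient from Corollary~\ref{corollary:interpolation}. The paper takes $\alpha^2=1-2\eta d\tan(d/2)$ rather than your $(1+2\eta d\tan(d/2))^{-1}$, which yields explicitly linear-in-$\eta$ projection parameters $\phi=\eta\,\tfrac{\tan R^{u_\eta}}{R^{u_\eta}}\,d\tan\tfrac{d}{2}$ and $\psi=\eta\,\tfrac{\tan R^{u_{1-\eta}}}{R^{u_{1-\eta}}}\,d\tan\tfrac{d}{2}$ and hence a cleaner expansion of $|\nabla((1-\phi)R^{u_\eta})|^2$; the difference from your choice is absorbed into $\textup{Quad}$.

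What is genuinely missing is the computation you yourself flag as ``the technical heart,'' and two of your expectations about it are slightly off. First, the final rearrangement is \emph{not} an algebraic identity. After expanding $|\nabla((1-\phi)R^{u_\eta})|^2-\alpha^2|\nabla R^{u_\eta}|^2$ (and its $u_{1-\eta}$ counterpart) to first order in $\eta$, using $1-\sec^2=-\tan^2$, one separates the right-hand side into an $\eta$-coefficient and a $\nabla\eta$-coefficient. The $\nabla\eta$-coefficient matches that of $-2\cos R^{u_\eta}\cos R^{u_{1-\eta}}\,\nabla(\tfrac{d}{\sin d}\eta F_\eta)\cdot\nabla F_\eta$ exactly; this is in fact how the paper \emph{derives} the correct $F_\eta$ and the product $ab=-\tfrac{2d}{\sin d}\cos R^{u_\eta}\cos R^{u_{1-\eta}}$. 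But the $\eta$-coefficients only satisfy the required inequality after applying AM--GM to the cross term $2(\tan R^{u_\eta}\nabla R^{u_\eta})\cdot(\tan R^{u_{1-\eta}}\nabla R^{u_{1-\eta}})$ that appears when expanding $|\nabla F_\eta|^2$. Second, the analogy with Theorem~\ref{theorem:energy-convexity} is therefore imperfect: there the final step \emph{is} a pointwise identity because only a single radial function $R$ appears, whereas here the two distinct radii $R^{u_\eta},R^{u_{1-\eta}}$ produce a mixed term that forces an inequality. Until these computations are written out, the lemma is asserted rather than proved.
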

\begin{proof}
Let $\eta \in C_c^{\infty}(\Omega)$ satisfy $0\le \eta < 1/2$.  For  $0\le \phi, \psi\le 1$ that will be determined below, we define the comparison maps
\begin{align*}
	\hat{u}_\eta &= (1-\phi(x)) u_\eta(x) + \phi(x) Q\\
	\hat{u}_{1-\eta} &= (1-\psi(x))u_{1-\eta}(x) + \psi(x) Q,
\end{align*}
where 
\[ 
u_\eta(x) = (1-\eta(x)) u_0 (x)+ \eta(x) u_1(x) \quad \mbox{and} \quad u_{1-\eta}(x) = \eta(x) u_0 (x)+(1- \eta(x)) u_1(x). 
\]
 By Corollary~\ref{corollary:interpolation-point}, 
\begin{align*}
	|\pi (\hat{u}_\eta)|^2 + | \pi( \hat{u}_{1-\eta})|^2 &\le \left(\frac{\sin(1-\phi)R^{u_\eta}}{\sin R^{u_\eta}} \right)^2 (|\pi( u_\eta) |^2 - |\nabla R^{u_\eta}|^2) + |\nabla ((1-\phi)R^{u_\eta})|^2 \\
	&\quad + \left(\frac{\sin(1-\psi)R^{u_{1-\eta}}}{\sin R^{u_{1-\eta}}} \right)^2 (|\pi( u_{1-\eta} )|^2 - |\nabla R^{u_{1-\eta}}|^2) + |\nabla ((1-\psi)R^{u_{1-\eta}})|^2.
\end{align*}
 Define $\phi$ and $\psi$ so that 
\begin{align*}
	 \frac{\sin^2 ((1-\phi)R^{u_\eta})}{\sin^2 R^{u_\eta}}  &= 1- 2\eta d \tan \frac{d}{2} + O(\eta^2)\\
	  \frac{\sin^2 ((1-\psi)R^{u_{1-\eta}})}{\sin^2 R^{u_{1-\eta}}}  &= 1- 2\eta d \tan \frac{d}{2}+ O(\eta^2).
\end{align*}
Since $\frac{\sin (1-a)\theta}{\sin \theta} = 1 - a \theta \cot\theta + O(a^2)$, we solve 
\begin{align*}
	\phi = \eta \frac{\tan R^{u_\eta}}{R^{u_\eta}} d \tan \frac{d}{2} \quad \mbox{and} \quad \psi = \eta \frac{\tan R^{u_{1-\eta}}}{R^{u_{1-\eta}}} d \tan \frac{d}{2}.
\end{align*}
Note that in particular $u_\eta, \hat{u}_\eta\in W^{1,2}_{u_0}(\Omega, \mathcal B_\rho(Q))$ and $u_{1-\eta}, \hat{u}_{1-\eta} \in W^{1,2}_{u_1}(\Omega, \mathcal B_\rho(Q))$. 

Together with the estimate for $|\pi(u_\eta)|^2 + | \pi(u_{1-\eta})|^2$ in Corollary~\ref{corollary:interpolation} (which also explains the choice of $\phi$ and $\psi$ in order to eliminate the coefficient), we have
\begin{align*}
	&|\pi( \hat{u}_\eta)|^2 + | \pi(\hat{u}_{1-\eta})|^2-  |\pi(u_0)|^2 - | \pi( u_1)|^2\\
	&\le -  2\eta (1+\frac{1}{2} d \tan \frac{d}{2} )   | \nabla d|^2 - 2 d \nabla \eta \cdot \nabla d  - (1- 2\eta d \tan \frac{d}{2}) ( | \nabla R^{u_\eta}|^2 + |\nabla R^{u_{1-\eta}}|^2)\\
	&\quad  + |\nabla (1- \eta \frac{\tan R^{u_\eta}}{R^{u_\eta}} d \tan \frac{d}{2}) R^{u_\eta}|^2 + |\nabla (1-\eta \frac{\tan R^{u_{1-\eta}}}{R^{u_{1-\eta}}} d \tan \frac{d}{2}) R^{u_{1-\eta}}|^2  + \textup{Quad} (\eta, |\nabla \eta|).
\end{align*}
Simplifying the expression and using  $1-\sec^2 \theta =-\tan^2 \theta$ , we obtain
\begin{align} \label{equation:comparison}
\begin{split}
	&\frac{1}{2} \left(|\pi( \hat{u}_\eta)|^2 + | \pi(\hat{u}_{1-\eta})|^2-  |\pi(u_0)|^2 - | \pi( u_1)|^2\right) \\
	&\quad \le \eta\bigg( -  (1+\frac{1}{2} d \tan \frac{d}{2} ) | \nabla d|^2 - d \tan \frac{d}{2} ( \tan^2 R^{u_\eta} | \nabla R^{u_\eta}|^2  + \tan^2 R^{u_{1-\eta}}|\nabla R^{u_{1-\eta}}|^2)\\
	&\quad\quad   - \nabla  (d \tan \frac{d}{2} ) \cdot (\tan R^{u_\eta} \nabla R^{u_\eta}+\tan R^{u_{1-\eta}}   \nabla R^{u_{1-\eta}})\bigg)\\
	&\quad \quad  + \nabla \eta \cdot \left( -d \nabla d -\tan R^{u_\eta} d \tan \frac{d}{2} \nabla R^{u_\eta} -\tan R^{u_{1-\eta}} d \tan \frac{d}{2} \nabla R^{u_{1-\eta}} \right) + \textup{Quad}(\eta, \nabla \eta).
	\end{split}
\end{align}
We  hope to find $a, b, F_\eta$ which are functions of $d, R^{u_\eta}$ and  $R^{u_{1-\eta}} $ such that the right hand side above is   $\le a \nabla (b\eta F_\eta) \cdot \nabla F_\eta$. 

Since $a \nabla (b\eta F_\eta) \cdot \nabla F_\eta=  \eta ( ab |\nabla F_\eta|^2 + \frac{a}{2} \nabla b\cdot \nabla F_\eta^2 )+ \frac{ab}{2} \nabla \eta \cdot \nabla F_\eta^2$, by comparing the terms involving $\nabla \eta$ in \eqref{equation:comparison}, we  solve 
\begin{align*}
	 \frac{ab}{2} \nabla \eta \cdot \nabla F_\eta^2&= \nabla \eta \cdot \left( -d \nabla d -\tan R^{u_\eta} d \tan \frac{d}{2} \nabla R^{u_\eta} -\tan R^{u_{1-\eta}} d \tan \frac{d}{2} \nabla R^{u_{1-\eta}} \right)\\
	&=- d \tan \frac{d}{2}  \nabla \eta  \cdot \left(  \nabla \log \sin^2 \frac{d}{2} -\nabla \log \cos R^{u_\eta} - \nabla \log  \cos R^{u_{1-\eta}}  \right)  \\
	&=- \frac{d}{\sin d} \cos R^{u_{\eta}} \cos R^{u_{1-\eta}} \nabla  \eta \cdot \nabla \frac{1-\cos d}{\cos R^{u_{\eta}} \cos R^{u_{1-\eta}}},
\end{align*}
where we use  $2\sin^2 \frac{d}{2} = (1-\cos d)$ and $\tan \frac{d}{2} = \frac{1-\cos d}{\sin d}$. It suggests us to choose 
\[
	\frac{ab}{2} =- \frac{d}{\sin d} \cos R^{u_{\eta}} \cos R^{u_{1-\eta}} \quad \mbox{and} \quad   F_\eta=\sqrt{\frac{1-\cos d}{\cos R^{u_{\eta}} \cos R^{u_{1-\eta}}}}.
\]
We then compute the term $ \eta ( ab |\nabla F_\eta|^2 + \frac{a}{2} \nabla b\cdot \nabla F_\eta^2 )$ for the above choices of $a, b,$ and  $F_\eta$. For the term $ab |\nabla F_\eta|^2$, we compute
\begin{align*}
	ab |\nabla F_\eta|^2 &= -\frac{d}{2 \sin d(1-\cos d)}  \left| \sin d \nabla d+ (1-\cos d)(\tan R^{u_\eta} \nabla R^{u_\eta} + \tan R^{u_{1-\eta}} \nabla R^{u_{1-\eta}}) \right|^2\\
	&\ge -\left( \frac{d\sin d}{2(1-\cos d)} |\nabla d|^2 + d \nabla d \cdot (\tan R^{u_\eta} \nabla R^{u_\eta} + \tan R^{u_{1-\eta}} \nabla R^{u_{1-\eta}})\right.\\
	&  \quad \quad \left. +\frac{d(1-\cos d)}{\sin d}(\tan^2 R^{u_\eta} |\nabla R^{u_\eta} |^2+ \tan^2 R^{u_{1-\eta}} |\nabla R^{u_{1-\eta}}|^2)\right),
\end{align*}
where we expand the quadratic term and use the AM-GM inequality to handle the cross term  $(\tan R^{u_\eta} \nabla R^{u_\eta}) \cdot  (\tan R^{u_{1-\eta}}\nabla R^{u_{1-\eta}})$.  For the term $\frac{a}{2} \nabla b\cdot \nabla F_\eta^2$, we assume $b=b(d)$ and compute:
\begin{align*}
	\frac{a}{2} \nabla b\cdot \nabla F_\eta^2  &= \frac{ab}{2} \nabla \log b\cdot \nabla F_\eta^2 \\
	&= -d \frac{b'}{b}  |\nabla d |^2-\frac{d(1-\cos d)}{\sin d} \frac{b'}{b}  \nabla d\cdot( \tan R^{u_\eta} \nabla R^{u_\eta} +  \tan R^{u_{1-\eta}} \nabla R^{u_{1-\eta}}  ).
\end{align*}
Combining the above inequalities, we obtain
\begin{align*}
	ab |\nabla F_\eta|^2+\frac{a}{2} \nabla b\cdot \nabla F_\eta^2& \ge - \left[  \left( \frac{d\sin d}{2(1-\cos d)} + d\frac{b'}{b} \right) |\nabla d|^2 \right.\\
	&\quad \quad \left.+ \left(d + \frac{d(1-\cos d)}{\sin d} \frac{b'}{b}  \right) \nabla d\cdot( \tan R^{u_\eta} \nabla R^{u_\eta} +  \tan R^{u_{1-\eta}} \nabla R^{u_{1-\eta}} )\right.\\
	&\quad \quad \left. +\frac{d(1-\cos d)}{\sin d}(\tan^2 R^{u_\eta} |\nabla R^{u_\eta} |^2+ \tan^2 R^{u_{1-\eta}} |\nabla R^{u_{1-\eta}}|^2) \right].
\end{align*}
Comparing to \eqref{equation:comparison}, we solve
\begin{align*}
	 \frac{d\sin d}{2(1-\cos d)}\nabla d + d \nabla \log b  &= (1+ \frac{1}{2} d \tan \frac{d}{2} )\nabla d\\
	 d\nabla d+ \frac{d(1-\cos d)}{\sin d} \nabla \log b   & = \nabla (d\tan \frac{d}{2}).
\end{align*}
which implies that $b = \frac{d}{\sin d}$, and hence $a = -2 \cos R^{u_{\eta}} \cos R^{u_{1-\eta}}$.

\end{proof}

\begin{theorem}[cf. {\cite[Corollary 2.3]{serbinowski}}] \label{theorem:subharmonicity}
Let  $u_0, u_1: \Omega \to \mathcal{B}_\rho(P) \subset X$ be a pair of energy minimizing maps (possibly with different boundary values). Let $d(x) = d(u_0(x), u_1(x))$ and $R^{u_i} = d(u_i, P)$. Then the function
\[
	F= \sqrt{\frac{1-\cos d}{\cos R^{u_0} \cos R^{u_1}}}
\]
satisfies the differential inequality weakly
\[
	\textup{div} (\cos R^{u_0} \cos R^{u_1} \nabla F)\ge 0.
\] 
\end{theorem}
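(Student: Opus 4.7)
The plan is to combine Lemma \ref{lemma:comparison} with the energy-minimizing property of $u_0$ and $u_1$, extract a first-order (in the parameter) variational inequality, and then unwind it into the claimed weak divergence statement. Throughout, write $w = \cos R^{u_0}\cos R^{u_1}$ and $g = d/\sin d$ for brevity.

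First, fix a nonnegative test function $\eta \in C_c^\infty(\Omega)$ and pick $\epsilon > 0$ small enough that $\epsilon\eta < 1/2$. Lemma \ref{lemma:comparison} applied with the test function $\epsilon\eta$ produces competitor maps $\hat{u}_{\epsilon\eta} \in W^{1,2}_{u_0}(\Omega, \mathcal{B}_\rho(P))$ and $\hat{u}_{1-\epsilon\eta}\in W^{1,2}_{u_1}(\Omega, \mathcal{B}_\rho(P))$. Since $u_0$ and $u_1$ are energy minimizers within their respective trace classes, $E(u_0) + E(u_1) \le E(\hat{u}_{\epsilon\eta}) + E(\hat{u}_{1-\epsilon\eta})$. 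Integrating the pointwise comparison bound from Lemma \ref{lemma:comparison} and using that the quadratic error satisfies $\mathrm{Quad}(\epsilon\eta, \epsilon\nabla\eta) = \epsilon^2\,\mathrm{Quad}(\eta, \nabla\eta)$, we obtain
\[
\int_\Omega 2\cos R^{u_{\epsilon\eta}} \cos R^{u_{1-\epsilon\eta}}\, \nabla\!\left(g\,\epsilon\eta\, F_{\epsilon\eta}\right) \cdot \nabla F_{\epsilon\eta}\, d\mu_g \le O(\epsilon^2).
\]
Pulling the scalar $\epsilon$ out of the gradient, dividing by $\epsilon$, and letting $\epsilon \to 0$ (so that the interpolations $u_{\epsilon\eta}, u_{1-\epsilon\eta}$ converge to $u_0, u_1$ and hence $R^{u_{\epsilon\eta}}, R^{u_{1-\epsilon\eta}}, F_{\epsilon\eta}$ converge to $R^{u_0}, R^{u_1}, F$ in $W^{1,2}_{\mathrm{loc}}$) yields, for every nonnegative $\eta \in C_c^\infty(\Omega)$,
\[
\int_\Omega w\, \nabla\!\left(g\eta F\right) \cdot \nabla F\, d\mu_g \le 0.
\]

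Distributionally this says $\langle g\eta F,\, \mathrm{div}(w\nabla F)\rangle \ge 0$ for every nonnegative $\eta \in C_c^\infty(\Omega)$. Since $gF \ge 0$ and $gF > 0$ precisely on the open set $\{F > 0\}$, varying $\eta$ there gives $\mathrm{div}(w\nabla F) \ge 0$ weakly on $\{F > 0\}$. On the complementary set $\{F = 0\}$, the function $F \in W^{1,2}_{\mathrm{loc}}$ attains its minimum value, forcing $\nabla F = 0$ almost everywhere there by a standard Stampacchia-type fact; hence $w\nabla F = 0$ a.e.\ on $\{F = 0\}$ and the inequality holds there trivially. Gluing the two regimes, by testing with a regularization such as $\eta_\delta = \varphi/(gF + \delta)$ and sending $\delta \to 0$, yields the weak inequality against an arbitrary nonnegative $\varphi \in C_c^\infty(\Omega)$.

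The main obstacle is this final step: rigorously passing from the weighted inequality (valid only with tests of the form $g\eta F$) to the unweighted weak divergence statement against arbitrary nonnegative tests. Dividing by $gF$ is harmless on $\{F > 0\}$ but degenerates on the vanishing set, so a careful approximation is needed to ensure the boundary $\partial\{F > 0\}$ contributes no singular measure; this mirrors the delicacy one meets for $d^2(u_0, u_1)$ in the NPC setting, here dressed with the curvature correction factor $1/(\cos R^{u_0}\cos R^{u_1})$. Secondary technical care is needed to justify the $\epsilon \to 0$ passage — in particular, strong $W^{1,2}_{\mathrm{loc}}$ convergence of $F_{\epsilon\eta}$ to $F$ — which follows from the continuity of geodesic interpolation in CAT(1) targets and the quantitative energy convexity of Theorem \ref{theorem:energy-convexity}.
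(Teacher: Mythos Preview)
Your approach matches the paper's exactly: replace $\eta$ by $t\eta$ for small $t>0$, invoke Lemma~\ref{lemma:comparison}, use the minimality of $u_0$ and $u_1$ against the competitors $\hat u_{t\eta},\hat u_{1-t\eta}$, divide by $t$, and let $t\to 0$. The paper in fact stops precisely at the point you flag as the ``main obstacle'': having obtained
\[
0 \le -\int_\Omega 2\cos R^{u_0}\cos R^{u_1}\,\nabla\!\left(\tfrac{d}{\sin d}\,\eta F\right)\cdot\nabla F\,d\mu_g
= 2\int_\Omega \left(\tfrac{d}{\sin d}\,\eta F\right)\mathrm{div}\!\left(\cos R^{u_0}\cos R^{u_1}\,\nabla F\right)d\mu_g
\]
for all nonnegative $\eta\in C_c^\infty(\Omega)$, it declares the proof complete without discussing how to pass from test functions of the special form $\tfrac{d}{\sin d}\,\eta F$ to arbitrary nonnegative test functions. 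So your proposal is, if anything, more scrupulous than the paper on this point. Your sketch for closing that gap (density to allow $\eta\in W^{1,2}_0\cap L^\infty$, then $\eta_\delta=\varphi/(gF+\delta)$ and $\delta\to 0$, noting $\nabla F=0$ a.e.\ on $\{F=0\}$) is the natural route, though one should verify the $W^{1,2}$ convergence of $g\eta_\delta F$ carefully since $gF$ need not be smooth.
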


\begin{proof}
Let $\eta \in C^\infty_c(\Omega)$ with $\eta \ge 0$. For $t>0$ sufficiently small, we have $0\le t\eta < 1/2$. Let $\hat{u}_{t\eta}$ and $\hat{u}_{1-t\eta}$ be the corresponding maps defined as  in Lemma~\ref{lemma:comparison}. Since $u_0$ and $u_1$ minimize the energy among maps of the same boundary values, we  have
 \begin{align*}
 	&0\le\int_\Omega |\pi(\hat{u}_\eta)|^2 +|\pi( \hat{u}_{1-\eta})|^2 - |\pi(u_0)|^2 - |\pi(u_1)|^2 \, d\mu_g\\
	& \le \int_\Omega - 2\cos R^{u_{t\eta}} \cos R^{u_{1-{t\eta}}} \nabla \left(\frac{d}{\sin d} t\eta F_{t\eta}\right) \cdot \nabla F_{t\eta} \, d\mu_g + t^2\textup{Quad}(\eta, \nabla \eta). 
 \end{align*}
 Dividing the inequality by $t$ and let $t\to 0$, since $R^{u_{t\eta}}\to R^{u_0}$ and $R^{u_{1-{t\eta}}}\to R^{u_1}$ and $F_{t\eta} \to F$,  we derive
\begin{align*}
 0&\le \int_\Omega - 2\cos R^{u_0} \cos R^{u_1} \nabla \left(\frac{d}{\sin d} \eta F\right) \cdot \nabla F\, d\mu_g \\
 &=2 \int_\Omega  \left(\frac{d}{\sin d} \eta F\right) \textup{div} \left(\cos R^{u_0} \cos R^{u_1}\nabla F\right)\, d\mu_g.
 \end{align*}
\end{proof}


\end{document}